\title[Braided Commutative Algebras]{Braided Commutative Algebras over\\ Quantized Enveloping Algebras}
\author{Robert Laugwitz}
\address{School of Mathematical Sciences,
University of Nottingham, University Park, Nottingham, NG7 2RD, UK}
\email{robert.laugwitz@nottingham.ac.uk}
\urladdr{https://www.nottingham.ac.uk/mathematics/people/robert.laugwitz}
\author{Chelsea Walton}
\address{Department of Mathematics, The University of Illinois at Urbana-Champaign,
273 Altgeld Hall, 1409 W. Green Street 
Urbana, IL 61801-2908}
\email{notlaw@illinois.edu}
\urladdr{https://faculty.math.illinois.edu/~notlaw/}
\newcommand{\vin}{\rotatebox[origin=c]{90}{$\in$}}
\DeclareMathAlphabet{\cal}{OMS}{zplm}{m}{n}
\newcommand{\leftexp}[2]{{\vphantom{#2}}^{#1}{#2}}
\newcommand{\leftexpsub}[3]{{\vphantom{#3}}^{#1}_{#2}{#3}}
\newcommand{\lYD}[1]{\leftexpsub{#1}{#1}{\mathbf{YD}}}
\newcommand{\Set}[1]{\left\lbrace #1\right\rbrace}
\newcommand{\op}[1]{\mathrm{#1}}
\newcommand{\oop}{\mathrm{op}}
\newcommand{\rdual}[1]{\leftexp{*\hspace{-2pt}}{#1}}
\newcommand{\lmod}[1]{#1\text{-}\mathbf{Mod}}
\newcommand{\lwfmod}[1]{#1\text{-}\mathbf{Mod}^{\mathrm{lfw}}}
\newcommand{\rmod}[1]{\mathbf{Mod}\text{-}#1}
\newcommand{\lcomod}[1]{#1\text{-}\mathbf{CoMod}}
\newcommand{\Cent}{\operatorname{Cent}}
\newcommand{\ev}{\operatorname{ev}}
\newcommand{\cEnd}{\cal{E}nd}
\newcommand{\Heis}{\operatorname{Heis}}
\newcommand{\ide}{\operatorname{Id}}
\newcommand{\Alg}{\mathbf{Alg}}
\newcommand{\ComAlg}{\mathbf{ComAlg}}
\newcommand{\gm}{\gamma}
\newcommand{\Vect}{\mathbf{Vect}_\Bbbk}
\newcommand{\Ug}{U_q(\mathfrak{g})}
\newcommand{\ug}{u_q(\mathfrak{g})}
\providecommand{\fr}[1]{\mathfrak{#1}}
\providecommand{\op}[1]{\operatorname{#1}}
\newcommand{\mZ}{\mathbb{Z}}
\newcommand{\mF}{\mathbb{F}}
\newcommand{\cC}{\cal{C}}
\newcommand{\cD}{\cal{D}}
\newcommand{\cB}{\cal{B}}
\newcommand{\cM}{\cal{M}}
\newcommand{\cN}{\cal{N}}
\newcommand{\cZ}{\cal{Z}}
\newcommand{\rF}{\mathrm{F}}
\newcommand{\rG}{\mathrm{G}}
\newcommand{\rR}{\mathrm{R}}
\newcommand{\rT}{\mathrm{T}}
\newtheoremstyle{defstyle}
  {0.5cm}                   
  {0.5cm}                   
  {\normalfont}           
  {}     
  {\normalfont\bfseries}  
  {:}                     
  {0.3cm}              
  {\thmname{#1}\thmnumber{ #2}\thmnote{ (#3)}}
\numberwithin{equation}{section}
\newtheorem*{rep@theorem}{\rep@title}
\newcommand{\newreptheorem}[2]{%
\newenvironment{rep#1}[1]{%
 \def\rep@title{#2 \ref{##1}}%
 \begin{rep@theorem}}%
 {\end{rep@theorem}}}
\newtheorem{theorem}{Theorem}[section]
\newtheorem{proposition}[theorem]{Proposition}
\newtheorem{corollary}[theorem]{Corollary}
\newtheorem{lemma}[theorem]{Lemma}
\newtheorem{theorem*}{Theorem}
\theoremstyle{definition}
\newtheorem{definition}[theorem]{Definition}
\newtheorem{notation}[theorem]{Notation}
\theoremstyle{remark}
\newtheorem{example}[theorem]{Example}
\newtheorem{remark}[theorem]{Remark}
\newtheorem{question}[theorem]{Question}
\newtheorem{problem}[theorem]{Problem}
\let\c@equation\c@theorem  
\numberwithin{equation}{section}
\subjclass[2010]{Primary 18D10; Secondary 16D90, 16T05, 17B37}
\keywords{braided commutative algebra, center, module algebra, monoidal category, Morita invariant}
\begin{document}

\begin{abstract}
We produce  braided commutative algebras in braided monoidal categories by generalizing Davydov's full center construction of commutative algebras in centers of monoidal categories. Namely, we build braided commutative algebras in relative monoidal centers $\cZ_\cB(\cC)$ from algebras in $\cB$-central monoidal categories $\cC$, where $\cB$ is an arbitrary braided monoidal category; 
Davydov's (and previous works of others) take place in the special case when $\cB$ is the category of vector spaces $\Vect$ over a field $\Bbbk$. 
Since key examples of relative monoidal centers are suitable representation categories of quantized enveloping algebras, we supply braided commutative module algebras over such quantum groups.

One application of our work is that we produce Morita invariants for algebras in $\cB$-central monoidal categories. 
Moreover, for a large class of $\cB$-central monoidal categories, our braided commutative algebras  arise as a braided version of centralizer algebras. This generalizes the fact that centers of algebras in $\Vect$ serve as Morita invariants. Many examples are provided throughout.
\end{abstract}
\maketitle


\section{Introduction}\label{sec:intro}

Let  $\Bbbk$ be a field and note that all algebraic structures in this manuscript are $\Bbbk$-linear. The purpose of this work is to systematically produce and study braided commutative algebras (or, commutative algebras, for short) in a certain well-behaved class of braided monoidal categories. This is achieved by generalizing Davydov's {\it full center} construction in \cites{Dav1,Dav2} for commutative algebras in centers of monoidal categories $\cZ(\cC)$, which was built on works of Fr\"{o}hlich--Fuchs--Runkel--Schweigert \cite{FFRS} and of Kong--Runkel \cite{KR} in their studies of algebras in modular tensor categories.

\smallskip
Braided commutative algebras are interesting mathematically for several reasons. For instance, they can be used to provide natural examples of bialgebroids, which are generalizations of $\Bbbk$-bialgebras with a base algebra possibly larger than $\Bbbk$. Namely, for a bialgebra $H$ and an algebra $A$ in the (braided monoidal) category of $H$-Yetter--Drinfeld modules, we have that $A$ is commutative if and only if the smash product algebra $A\rtimes H$ admits the structure of a bialgebroid with base algebra $A$ \cite{BM}*{Theorem~4.1}, \cite{Lu}*{Theorem~5.1}. 

\smallskip

Commutative algebras in braided monoidal categories also have applications to physics. For instance,  {\it extended chiral algebras}  in rational conformal field theory (RCFT) arise as commutative algebras in modular tensor categories \cite{FRS}*{Section~5.5}. These algebras were shown to be Morita invariants in  modular tensor categories, and  are used to prove that in two-dimensional RCFT there cannot be several incompatible sets of boundary conditions for a given bulk theory \cite{KR}. 

\smallskip

Moreover, commutative algebras in braided monoidal categories $\cC$ have been used to classify certain extensions of vertex operator algebras; see \cite{HKL}
and \cite{HK} for more details.

\smallskip

We anticipate that our  construction of braided commutative algebras here will have similar and new implications both in mathematics and physics. For now, note that we  deliver a supply of commutative  algebras in (braided monoidal) representation categories of quantized enveloping algebras, a result that extends beyond work in \cites{Dav1,Dav2} as we discuss below.

\smallskip

In this work, we build commutative algebras in {\it relative monoidal centers} $\cZ_\cB(\cC)$ [Definition~\ref{def:ZBC}], which is a class of braided monoidal categories studied by the first author  \cites{L15, L18} (motivated by  \cites{BD,Maj99}, and related to \cite{Mue}*{Definition 2.6}; see also \cite{DNO}*{Section~4}). Here, $\cB$ is a braided monoidal category, and $\cC$ is a monoidal category that is  {\it $\cB$-central} [Definition~\ref{def:augment}]. For instance, when  $\cB$ is the category of $\Bbbk$-vector spaces  $\Vect$, we have that $\cZ_\cB(\cC)$ is the usual monoidal center $\cZ(\cC)$ of $\cC$ \cites{JS, Maj91}. In general, $\cZ_\cB(\cC)$ is a proper subcategory of $\cZ(\cC)$ [Proposition~\ref{prop:rel-center}, Example~\ref{propersubcat}]. Analogous to Davydov's full center construction for commutative algebras in $\cZ(\cC)$  \cite{Dav1}, we show that if there exists a functor 
\begin{equation}
\label{Rb-intro}
\rR_\cB\colon \cC \to \cZ_\cB(\cC)
\end{equation}
that is a right adjoint to the forgetful functor $\cZ_\cB(\cC) \to \cC$, then it is  lax monoidal [Lemma~\ref{lem:RB}] (so it sends algebras in $\cC$ to  algebras in $\cZ_\cB(\cC)$). Our method for producing commutative algebras in $\cZ_\cB(\cC)$  is called the {\it $\cB$-center} construction; see Section~\ref{sec:Bcenter} and Theorem~\ref{thm:Bcenter}.

\smallskip

Key examples of Davydov's work occur when $\cC=\lmod{H}$, the category of modules\footnote{Throughout the paper, all modules are left modules unless stated otherwise. } over a Hopf algebra $H$; if $H$ is finite-dimensional, then
$\cZ(\cC)$ is equivalent to the category of modules over the Drinfeld double of $H$ \cites{Dri1,Dri2}.  
Now we construct a larger class of commutative algebras in braided monoidal categories, including commutative algebras in representation categories of {\it braided Drinfeld doubles} \cite{L15} (or, of {\it double bosonizations} \cite{Maj99}), including those in our title. In particular, take $\mathfrak{g}$ a semisimple Lie algebra over $\Bbbk$ with positive/negative nilpotent part $\mathfrak{n}^{+/-}$ and positive/negative Borel part $\mathfrak{g}^{+/-}$. Then, 

\smallskip

\begin{itemize}
\item $\cZ_\cB(\cC) \simeq \lwfmod{\Ug}$, the category of locally $\mathfrak{n}^+$-finite weight modules over a quantized enveloping algebra of $\mathfrak{g}$ over $\Bbbk(q)$, for $q$ a generic variable,  when 
\item $\cC = \lmod{U_q(\mathfrak{g^-})}^{\mathrm{w}}$, the category of weight modules over the negative Borel part, and
\item $\cB = \lcomod{K}$, where $K = U_q(\mathfrak{h})$ is the quantized enveloping algebra of the Cartan subalgebra $\mathfrak{h}$ of $\mathfrak{g}$, also realized as a group algebra of a lattice;
\end{itemize}

\vspace{-.7in}

\noindent($\dag$)

\vspace{.55in}

\noindent see \cite{L18}*{Section~4.3}. One can also work with small quantum groups, cf. \cite{L18}*{Section~4.4}:

\begin{itemize}
\item $\cZ_\cB(\cC) \simeq \lmod{u_q(\mathfrak{g})}$, the category of modules over a finite-dimensional quantized enveloping algebra of $\mathfrak{g}$, for $q$ a root of unity, when 
\item $\cC = \lmod{u_q(\mathfrak{g^-})}$, and
\item $\cB = \lmod{K}$, where $K$ is a 
group algebra of a certain finite abelian group.
\end{itemize}

\vspace{-.55in}

\noindent($\ddag$)

\vspace{.4in}

Let us consider a setting more general than $(\dag, \ddag)$ as follows. Take:

\vspace{.3in}

\noindent($\star$)

\vspace{-.5in}

\begin{itemize}
\item $K$, a quasi-triangular Hopf algebra;
\item $\cB = \lmod{K}$, the braided monoidal category of $K$-modules  with a braiding $\Psi$, 
\item $H$, a braided Hopf algebra in $\cB$; and 
\item $\cC = \lmod{H}(\cB)$, the monoidal category of $H$-modules in $\cB$ [Example~\ref{ex:augment}(3)].
\end{itemize}

\noindent Then, by \cite{L18}*{Example~3.35 and Proposition~3.36} (as recorded in [Proposition~\ref{prop:YD}]):
\begin{itemize}
\item $\cZ_\cB(\cC) \simeq \lYD{H}(\cB)$, the category of $H$-Yetter--Drinfeld modules  over $\cB$ \cites{Bes,BD}.
\end{itemize}

\noindent
We also recall in Definition~\ref{def:DrinKH} and Proposition~\ref{prop:Phi} that there is a functor $\Phi$ from $\lYD{H}(\cB)$ to the category of representations of the  braided Drinfeld double  $\text{Drin}_K(H^{*}, H)$ . Here, $\text{Drin}_K(H^{*}, H)$ is the usual Drinfeld double of $H$ when $K = \Bbbk$ and $H$ is a finite-dimensional Hopf algebra. In a special case, $$\Ug \cong \text{Drin}_{U_q(\mathfrak{h})}(U_q(\mathfrak{n}^+), U_q(\mathfrak{n}^-))$$ 
as shown in \cite{Maj99}*{Section~4}, see also \cite{L17}*{Section~3.6}. 

\smallskip

We verify the functor $\rR_\cB$ from \eqref{Rb-intro} exists in setting ($\star$) [Theorem~\ref{prop:RB-YD}]. We also establish under the setting $(\star)$ that the image of an algebra $A$ in $\cC$  under $\rR_\cB$  is a braided version of the centralizer algebra $\text{Cent}_{A\rtimes H}^l(A)^{\Psi^{-1}}$ of $A$ in $A\rtimes H$
[Theorem \ref{cent-thm}]. This is analogous to the main result of \cite{Dav2} in the case $\cB = \Vect$. 

\smallskip
Our main constructions and results are summarized in Figure~1 below for setting $(\star)$ when $\cC = \lmod{H}(\cB)$, {\it although much of the work below holds for arbitrary $\cB$-central monoidal~categories}.

\begin{figure}[h]
{\small
\[
\xymatrix@R=.0pc@C=2.6pc{
\cC= \lmod{H}(\cB) \ar[rr]^{\hspace{.2in}\exists \;\rR_\cB \text{~(lax monoidal)}}_{\hspace{.2in}\text{[Theorem~\ref{prop:RB-YD}]}}&
&  \lYD{H}(\cB) \ar[r]^{\hspace{-.45in}\Phi \text{~($\sim$ if $H$ fin. dim.)}}_{\hspace{-.45in}\text{[Prop.~\ref{prop:Phi}]}} & \lmod{\text{Drin}_K(H^{*}, H)} \\\\\\\\
\Alg(\cC) \ar@{->}[rr]^{\hspace{-.25in}\Alg(\rR_\cB)} \ar@{.>}[rrdddddd]^{\hspace{-.2in} {\tiny\begin{tabular}{l}
\hspace{.3in} $\cB$-center $Z_\cB(-)$\\
\hspace{.3in} {[Def.~\ref{def:Bcenter}, Prop.~\ref{prop:ZBA}]} 
\end{tabular}}} && \Alg(\lYD{H}(\cB)) \ar@{->}[r]^{\hspace{-.4in}\Alg(\Phi)} \ar@{.>}[dddddd]^{{\scriptsize\begin{tabular}{l}
\tiny{left center $C^l(-)$}\\ \tiny{[Def.~\ref{def:leftcenter}]}\\
\tiny{[Prop.~\ref{prop:leftcenter}]}
\end{tabular}}}& 
\Alg(\lmod{\text{Drin}_K(H^{*}, H)}) \ar@{.>}[dddddd]^{\tiny{\begin{tabular}{l}
\text{induced by}\\ $\Alg(\Phi)$ and $C^l(-)$\end{tabular}}}\\
\overset{~\vin}{\framebox{$A$}} 
&&&&&&\\
 &&&&&\\
  &&&&&\\
   &&&&&\\
      &&&&&\\
&& \hspace{-1.4in} \framebox{$Z_\cB(A)$} \overset{\text{[Theorem~\ref{thm:Bcenter}]}}{=} \framebox{$C^l(\rR_\cB(A))$} \in \ComAlg(\lYD{H}(\cB)) \ar@{->}[r]^{\hspace{0.05in}\Alg(\Phi)} \ar[ddddd]_{\tiny{\text{Forgetful}}} & 
\ComAlg(\lmod{\text{Drin}_K(H^{*}, H)})\\\\\\\\
      &&&&&\\
&&\hspace{-1.15in}\underset{\tiny{\text{[Theorem \ref{cent-thm}]}}}{\framebox{$\Cent^l_{A\rtimes H}(A)^{\Psi^{-1}}$}} \in \hspace{.05in}\Alg(\cB)& 
}
\]
}
{\small \caption{Main constructions for setting ($\star$).\\ Straight arrows are functors and dotted arrows are algebra assignments. }}
\end{figure}

\smallskip
The {\it $\cB$-center}  was discussed above after \eqref{Rb-intro}.
The {\it left center}, considered initially in \cites{vO-Z,Ost,FFRS} for a given braided monoidal category $\cD$, was used in \cite{Dav1} to produce  commutative algebras in  $\cD$ from algebras in $\cD$. 

\smallskip

In comparison with \cites{Dav1, Dav2}, to achieve our constructions above we must use more involved techniques of graphical calculus sensitive to the order of crossing strands, since we work in braided monoidal categories $\cB$ more general than $\Vect$. In any case, with $\cB$-centers we are able to produce Morita invariants of algebras in $\cB$-central monoidal categories as discussed below.

\begin{reptheorem}{thm:Morita}
Take $\cC$ a $\cB$-central monoidal category, and algebras $A$, $A'$ in~$\cC$. Suppose that the categories of right modules over $A$ and over $A'$ in $\cC$ are equivalent as left $\cC$-module categories. Then, the $\cB$-centers $Z_\cB(A)$ and $Z_\cB(A')$ are isomorphic as commutative algebras in~$\cZ_{\cB}(\cC)$. 
\end{reptheorem}

In reference to setting $(\ddag)$, for instance, one can employ the theorem above to produce Morita invariants for $u_q(\mathfrak{n}^-)$-module algebras by using braided commutative $u_q(\mathfrak{g})$-module algebras.

\smallskip

In addition to Davydov's work \cites{Dav1, Dav2} and the first author's work on comodule algebras  over braided Drinfeld doubles \cite{L17}, our results have connections to several other articles in the literature. See several works on braided commutative algebras in Yetter--Drinfeld categories, including \cites{CFM, C-vO-Z, CW}. See also work of Montgomery--Schneider \cite{MS}, of Cline \cite{Cline}, and of Kinser and the second author \cite{KW}  on extending module algebras over Taft algebras to those over their Drinfeld double and over~$u_q(\mathfrak{sl}_2)$. On another related note, Etingof--Gelaki realized the representation category of a small quantum group $u_q(\mathfrak{g})$ as the monoidal center of a representation category of a certain quasi-Hopf algebra $A_q(\mathfrak{g})$~\cite{Etingof-Gelaki}.

\medskip

This paper is organized as follows. We discuss  categorical preliminaries in Section~\ref{sec:prelim}, including the left center construction;  we also introduce  {\it braided centralizer algebras} there. Next, we introduce and study the $\cB$-center construction and the functor R$_\cB$ in Section~\ref{sec:main}, and we also verify Theorems~\ref{thm:Bcenter} and~\ref{thm:Morita} and discuss the special case when $\cC = \cB$, which is a monoidal category central over itself. Then, we restrict our attention to setting $(\star)$ in Section~\ref{sec:cent} and show that algebra images under  R$_\cB$ are braided centralizer algebras.
Towards constructing examples for the material in Sections~\ref{sec:prelim}--\ref{sec:cent}, we discuss braided Drinfeld doubles and Heisenberg doubles in Section~\ref{sec:doubles}. Finally,
we provide examples of our results for braided commutative algebras in the representation categories of  the small quantum group $u_q(\mathfrak{sl}_2)$ in Section~\ref{sec:uqsl2}, and of  the Sweedler Hopf algebra $T_2(-1)$ in Section~\ref{sec:T2}. We discuss how to generalize the detailed work in Section~\ref{sec:uqsl2} to $U_q(\mathfrak{g})$ and $u_q(\mathfrak{g})$ in Section~\ref{sec:uqg}, and we end by listing several directions for further investigation there.


\medskip

\section{Categorical preliminaries}\label{sec:prelim}

In Section~\ref{sec:notation}, we first set up notation and conventions that we will use throughout this work. Next, we recall terminal objects and the comma category in Section~\ref{sec:terminal}, and then discuss  in Section~\ref{sec:leftcenter} the left and right center construction that produces commutative algebras in braided monoidal categories from algebras in such categories. Finally, we introduce and study braided versions of centralizer algebras  in Section~\ref{sec:braidcent}.

\subsection{Notation and conventions} \label{sec:notation}

All categories in this work are abelian, complete under arbitrary countable
biproducts, and enriched over the category of $\Bbbk$-vector spaces $\Vect$. The reader may wish to refer to \cite{EGNO} or \cite{TV} for further background information.

\smallskip

Throughout, $\cC = (\cC, \otimes)$ denotes a monoidal category; later in Section~\ref{sec:augment}, $\cC$ will be {\it $\cB$-central} in the sense of Definition~\ref{def:augment} for a braided monoidal category $\cB$.
The tensor unit of each monoidal category is denoted by $I$. We usually omit the associativity and unitarity isomorphisms for monoidal categories 
which is justified by MacLane's coherence theorem.

\smallskip

Unless stated otherwise, we assume that all monoidal functors $F\colon\cC \to \cC'$ are strong monoidal, i.e., there exists a natural isomorphism $$F_{X,Y}\colon F(X)\otimes F(Y) \overset{\sim}{\to} F(X\otimes Y)$$ that is compatible with the associativity constraints of $\cC$ and $\cC'$, and so that $F(I_\cC) \cong I_{\cC'}$.

\smallskip

We denote by $\Alg(\cC)$ the category of {\it algebras} $(A,m,u)$ in $\cC$; here, $A$ is an object of $\cC$ with associative multiplication $m\colon  A\otimes A \to A$ and unit $u\colon I \to A$. As usual, given an algebra $A$ in $\cC$, a \emph{left $A$-module} is a pair $(V,a_V)$ for $V$ an object in $\cC$ and $a_V\colon A\otimes V\to V$ a morphism in $\cC$ satisfying 
$a_V(m\otimes \ide_V)=a_V(\ide_A\otimes a_V)$ and $a_V(u\otimes \ide_V)=\ide_V$.
A morphism of $A$-modules $(V,a_V) \to (W, a_W)$ is a morphism $V \to W$ in $\cC$ that intertwines with $a_V$ and $a_W$. 
This way, we define the category $\lmod{A}(\cC)$ of left $A$-modules in $\cC$. Analogously, we define $\rmod{A}(\cC)$, the category of \emph{right} $A$-modules in $\cC$.

\smallskip

Moreover, we reserve $\cD$ to be  an arbitrary braided monoidal category with braiding $\Psi_\cD$, or $\Psi$ if $\cD$ is understood. In this case, we can consider {\it bialgebras} and {\it Hopf algebras} in  $\cD$, and we assume that all such Hopf algebras in our work have an invertible antipode. 

\smallskip

If $(A,m)$ is an algebra in $(\cD, \Psi)$, then $A$ is {\it braided commutative} (or, {\it commutative}) if $m\Psi = m$ (or, equivalently, if $m\Psi^{-1} = m$) as morphisms in $\cD$. We denote the full subcategory of $\Alg(\cD)$ of commutative algebras by $\ComAlg(\cD)$. The {\it braided opposites} of $(A,m)$ are $A^{\Psi}:=(A, m\Psi)$ and $A^{\Psi^{-1}}:=(A, m\Psi^{-1})$ with respect to the braiding and inverse braiding of $\cD$, respectively. If $A$ is braided commutative, then $A = A^{\Psi} = A^{\Psi^{-1}}$.

\smallskip

We use the graphical calculus as in \cite{L17}, similar to that used in \cite{Maj92}, for computations in (braided) monoidal categories $\cD$. The braiding in $\cD$ is denoted by $$\Psi=\vcenter{\hbox{
\begingroup%
  \makeatletter%
  \providecommand\color[2][]{%
    \errmessage{(Inkscape) Color is used for the text in Inkscape, but the package 'color.sty' is not loaded}%
    \renewcommand\color[2][]{}%
  }%
  \providecommand\transparent[1]{%
    \errmessage{(Inkscape) Transparency is used (non-zero) for the text in Inkscape, but the package 'transparent.sty' is not loaded}%
    \renewcommand\transparent[1]{}%
  }%
  \providecommand\rotatebox[2]{#2}%
  \ifx\svgwidth\undefined%
    \setlength{\unitlength}{8.22109334bp}%
    \ifx\svgscale\undefined%
      \relax%
    \else%
      \setlength{\unitlength}{\unitlength * \real{\svgscale}}%
    \fi%
  \else%
    \setlength{\unitlength}{\svgwidth}%
  \fi%
  \global\let\svgwidth\undefined%
  \global\let\svgscale\undefined%
  \makeatother%
  \begin{picture}(1,1.0011064)%
    \put(0,0){\includegraphics[width=\unitlength,page=1]{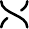}}%
  \end{picture}%
\endgroup%
}}.$$ Moreover, for a Hopf algebra $H:=(H,m,u,\Delta,\varepsilon,S)$ in $\cD$, we denote: 
\begin{gather*}
m=\vcenter{\hbox{
\begingroup%
  \makeatletter%
  \providecommand\color[2][]{%
    \errmessage{(Inkscape) Color is used for the text in Inkscape, but the package 'color.sty' is not loaded}%
    \renewcommand\color[2][]{}%
  }%
  \providecommand\transparent[1]{%
    \errmessage{(Inkscape) Transparency is used (non-zero) for the text in Inkscape, but the package 'transparent.sty' is not loaded}%
    \renewcommand\transparent[1]{}%
  }%
  \providecommand\rotatebox[2]{#2}%
  \newcommand*\fsize{\dimexpr\f@size pt\relax}%
  \newcommand*\lineheight[1]{\fontsize{\fsize}{#1\fsize}\selectfont}%
  \ifx\svgwidth\undefined%
    \setlength{\unitlength}{25.6875375bp}%
    \ifx\svgscale\undefined%
      \relax%
    \else%
      \setlength{\unitlength}{\unitlength * \real{\svgscale}}%
    \fi%
  \else%
    \setlength{\unitlength}{\svgwidth}%
  \fi%
  \global\let\svgwidth\undefined%
  \global\let\svgscale\undefined%
  \makeatother%
  \begin{picture}(1,0.59353343)%
    \lineheight{1}%
    \setlength\tabcolsep{0pt}%
    \put(0,0.59353343){\color[rgb]{0,0,0}\makebox(0,0)[lt]{\lineheight{0}\smash{\begin{tabular}[t]{l} \end{tabular}}}}%
    \put(0,0){\includegraphics[width=\unitlength,page=1]{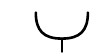}}%
  \end{picture}%
\endgroup%
}}\,, \qquad
\Delta=\vcenter{\hbox{
\begingroup%
  \makeatletter%
  \providecommand\color[2][]{%
    \errmessage{(Inkscape) Color is used for the text in Inkscape, but the package 'color.sty' is not loaded}%
    \renewcommand\color[2][]{}%
  }%
  \providecommand\transparent[1]{%
    \errmessage{(Inkscape) Transparency is used (non-zero) for the text in Inkscape, but the package 'transparent.sty' is not loaded}%
    \renewcommand\transparent[1]{}%
  }%
  \providecommand\rotatebox[2]{#2}%
  \ifx\svgwidth\undefined%
    \setlength{\unitlength}{16.80101968bp}%
    \ifx\svgscale\undefined%
      \relax%
    \else%
      \setlength{\unitlength}{\unitlength * \real{\svgscale}}%
    \fi%
  \else%
    \setlength{\unitlength}{\svgwidth}%
  \fi%
  \global\let\svgwidth\undefined%
  \global\let\svgscale\undefined%
  \makeatother%
  \begin{picture}(1,0.76387957)%
    \put(0,0){\includegraphics[width=\unitlength]{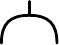}}%
  \end{picture}%
\endgroup%
}}\,, \qquad u=\vcenter{\hbox{
\begingroup%
  \makeatletter%
  \providecommand\color[2][]{%
    \errmessage{(Inkscape) Color is used for the text in Inkscape, but the package 'color.sty' is not loaded}%
    \renewcommand\color[2][]{}%
  }%
  \providecommand\transparent[1]{%
    \errmessage{(Inkscape) Transparency is used (non-zero) for the text in Inkscape, but the package 'transparent.sty' is not loaded}%
    \renewcommand\transparent[1]{}%
  }%
  \providecommand\rotatebox[2]{#2}%
  \ifx\svgwidth\undefined%
    \setlength{\unitlength}{4.8bp}%
    \ifx\svgscale\undefined%
      \relax%
    \else%
      \setlength{\unitlength}{\unitlength * \real{\svgscale}}%
    \fi%
  \else%
    \setlength{\unitlength}{\svgwidth}%
  \fi%
  \global\let\svgwidth\undefined%
  \global\let\svgscale\undefined%
  \makeatother%
  \begin{picture}(1,2.56889521)%
    \put(0,0){\includegraphics[width=\unitlength]{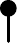}}%
  \end{picture}%
\endgroup%
}}\,, \qquad \varepsilon=\vcenter{\hbox{
\begingroup%
  \makeatletter%
  \providecommand\color[2][]{%
    \errmessage{(Inkscape) Color is used for the text in Inkscape, but the package 'color.sty' is not loaded}%
    \renewcommand\color[2][]{}%
  }%
  \providecommand\transparent[1]{%
    \errmessage{(Inkscape) Transparency is used (non-zero) for the text in Inkscape, but the package 'transparent.sty' is not loaded}%
    \renewcommand\transparent[1]{}%
  }%
  \providecommand\rotatebox[2]{#2}%
  \ifx\svgwidth\undefined%
    \setlength{\unitlength}{4.8bp}%
    \ifx\svgscale\undefined%
      \relax%
    \else%
      \setlength{\unitlength}{\unitlength * \real{\svgscale}}%
    \fi%
  \else%
    \setlength{\unitlength}{\svgwidth}%
  \fi%
  \global\let\svgwidth\undefined%
  \global\let\svgscale\undefined%
  \makeatother%
  \begin{picture}(1,2.56889521)%
    \put(0,0){\includegraphics[width=\unitlength]{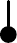}}%
  \end{picture}%
\endgroup%
}}\,, \qquad S=\vcenter{\hbox{
\begingroup%
  \makeatletter%
  \providecommand\color[2][]{%
    \errmessage{(Inkscape) Color is used for the text in Inkscape, but the package 'color.sty' is not loaded}%
    \renewcommand\color[2][]{}%
  }%
  \providecommand\transparent[1]{%
    \errmessage{(Inkscape) Transparency is used (non-zero) for the text in Inkscape, but the package 'transparent.sty' is not loaded}%
    \renewcommand\transparent[1]{}%
  }%
  \providecommand\rotatebox[2]{#2}%
  \newcommand*\fsize{\dimexpr\f@size pt\relax}%
  \newcommand*\lineheight[1]{\fontsize{\fsize}{#1\fsize}\selectfont}%
  \ifx\svgwidth\undefined%
    \setlength{\unitlength}{15.03931732bp}%
    \ifx\svgscale\undefined%
      \relax%
    \else%
      \setlength{\unitlength}{\unitlength * \real{\svgscale}}%
    \fi%
  \else%
    \setlength{\unitlength}{\svgwidth}%
  \fi%
  \global\let\svgwidth\undefined%
  \global\let\svgscale\undefined%
  \makeatother%
  \begin{picture}(1,1.04847142)%
    \lineheight{1}%
    \setlength\tabcolsep{0pt}%
    \put(0,0){\includegraphics[width=\unitlength,page=1]{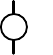}}%
    \put(-0.01126533,0.34777809){\color[rgb]{0,0,0}\makebox(0,0)[lt]{\lineheight{0}\smash{\begin{tabular}[t]{l}$+$\end{tabular}}}}%
  \end{picture}%
\endgroup%
}},\qquad S^{-1}=\vcenter{\hbox{
\begingroup%
  \makeatletter%
  \providecommand\color[2][]{%
    \errmessage{(Inkscape) Color is used for the text in Inkscape, but the package 'color.sty' is not loaded}%
    \renewcommand\color[2][]{}%
  }%
  \providecommand\transparent[1]{%
    \errmessage{(Inkscape) Transparency is used (non-zero) for the text in Inkscape, but the package 'transparent.sty' is not loaded}%
    \renewcommand\transparent[1]{}%
  }%
  \providecommand\rotatebox[2]{#2}%
  \newcommand*\fsize{\dimexpr\f@size pt\relax}%
  \newcommand*\lineheight[1]{\fontsize{\fsize}{#1\fsize}\selectfont}%
  \ifx\svgwidth\undefined%
    \setlength{\unitlength}{11.46509857bp}%
    \ifx\svgscale\undefined%
      \relax%
    \else%
      \setlength{\unitlength}{\unitlength * \real{\svgscale}}%
    \fi%
  \else%
    \setlength{\unitlength}{\svgwidth}%
  \fi%
  \global\let\svgwidth\undefined%
  \global\let\svgscale\undefined%
  \makeatother%
  \begin{picture}(1,1.37533003)%
    \lineheight{1}%
    \setlength\tabcolsep{0pt}%
    \put(0,0){\includegraphics[width=\unitlength,page=1]{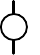}}%
    \put(-0.01477728,0.45619713){\color[rgb]{0,0,0}\makebox(0,0)[lt]{\lineheight{0}\smash{\begin{tabular}[t]{l}$-$\end{tabular}}}}%
  \end{picture}%
\endgroup%
}}.
\end{gather*}
Combining these symbols, we can display all axioms of a Hopf algebra in $\cD$ (see, e.g., \cite{L17}*{Equations~1.2--1.9}). 
For example, the bialgebra condition becomes
\begin{equation*}
\vcenter{\hbox{
\begingroup%
  \makeatletter%
  \providecommand\color[2][]{%
    \errmessage{(Inkscape) Color is used for the text in Inkscape, but the package 'color.sty' is not loaded}%
    \renewcommand\color[2][]{}%
  }%
  \providecommand\transparent[1]{%
    \errmessage{(Inkscape) Transparency is used (non-zero) for the text in Inkscape, but the package 'transparent.sty' is not loaded}%
    \renewcommand\transparent[1]{}%
  }%
  \providecommand\rotatebox[2]{#2}%
  \ifx\svgwidth\undefined%
    \setlength{\unitlength}{75.57609773bp}%
    \ifx\svgscale\undefined%
      \relax%
    \else%
      \setlength{\unitlength}{\unitlength * \real{\svgscale}}%
    \fi%
  \else%
    \setlength{\unitlength}{\svgwidth}%
  \fi%
  \global\let\svgwidth\undefined%
  \global\let\svgscale\undefined%
  \makeatother%
  \begin{picture}(1,0.40732931)%
    \put(0.31497865,0.20559355){\color[rgb]{0,0,0}\makebox(0,0)[lb]{\smash{ }}}%
    \put(0,0){\includegraphics[width=\unitlength,page=1]{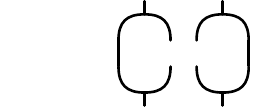}}%
    \put(0.25222581,0.17800578){\color[rgb]{0,0,0}\makebox(0,0)[lb]{\smash{$=$}}}%
    \put(0,0){\includegraphics[width=\unitlength,page=2]{bialgebra.pdf}}%
    \put(0.79876256,0.15420241){\color[rgb]{0,0,0}\makebox(0,0)[lb]{\smash{$~$}}}%
    \put(0,0){\includegraphics[width=\unitlength,page=3]{bialgebra.pdf}}%
  \end{picture}%
\endgroup%
}} \qquad \Longleftrightarrow\qquad \Delta m=(m\otimes m)(\ide\otimes \Psi\otimes \ide)(\Delta\otimes \Delta).
\end{equation*}
A left $H$-action on $V\in \cD$ is  denoted by 
$$a_V=\vcenter{\hbox{
\begingroup%
  \makeatletter%
  \providecommand\color[2][]{%
    \errmessage{(Inkscape) Color is used for the text in Inkscape, but the package 'color.sty' is not loaded}%
    \renewcommand\color[2][]{}%
  }%
  \providecommand\transparent[1]{%
    \errmessage{(Inkscape) Transparency is used (non-zero) for the text in Inkscape, but the package 'transparent.sty' is not loaded}%
    \renewcommand\transparent[1]{}%
  }%
  \providecommand\rotatebox[2]{#2}%
  \ifx\svgwidth\undefined%
    \setlength{\unitlength}{9.12051093bp}%
    \ifx\svgscale\undefined%
      \relax%
    \else%
      \setlength{\unitlength}{\unitlength * \real{\svgscale}}%
    \fi%
  \else%
    \setlength{\unitlength}{\svgwidth}%
  \fi%
  \global\let\svgwidth\undefined%
  \global\let\svgscale\undefined%
  \makeatother%
  \begin{picture}(1,1.40715315)%
    \put(0,0){\includegraphics[width=\unitlength]{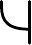}}%
    \put(0.13167899,0.48417802){\color[rgb]{0,0,0}\makebox(0,0)[lb]{\smash{$~$
}}}%
  \end{picture}%
\endgroup%
}}\colon H\otimes V\to V.$$
Moreover, the $H$-module structure $a_{V\otimes W}$ on the tensor product $V\otimes W$ of left $H$-modules $V$, $W$ becomes:
\begin{equation}\label{tensorproductaction}a_{V\otimes W}:= (a_V\otimes a_W)(\ide_H\otimes \Psi_{H,V}\otimes \ide_W)(\Delta\otimes \ide_{V\otimes W})~=~\vcenter{\hbox{
\begingroup%
  \makeatletter%
  \providecommand\color[2][]{%
    \errmessage{(Inkscape) Color is used for the text in Inkscape, but the package 'color.sty' is not loaded}%
    \renewcommand\color[2][]{}%
  }%
  \providecommand\transparent[1]{%
    \errmessage{(Inkscape) Transparency is used (non-zero) for the text in Inkscape, but the package 'transparent.sty' is not loaded}%
    \renewcommand\transparent[1]{}%
  }%
  \providecommand\rotatebox[2]{#2}%
  \ifx\svgwidth\undefined%
    \setlength{\unitlength}{30.44988496bp}%
    \ifx\svgscale\undefined%
      \relax%
    \else%
      \setlength{\unitlength}{\unitlength * \real{\svgscale}}%
    \fi%
  \else%
    \setlength{\unitlength}{\svgwidth}%
  \fi%
  \global\let\svgwidth\undefined%
  \global\let\svgscale\undefined%
  \makeatother%
  \begin{picture}(1,0.88991524)%
    \put(0,0){\includegraphics[width=\unitlength,page=1]{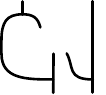}}%
    \put(0.5005321,0.47717664){\color[rgb]{0,0,0}\makebox(0,0)[lb]{\smash{$~$}}}%
    \put(0,0){\includegraphics[width=\unitlength,page=2]{tensoraction.pdf}}%
  \end{picture}%
\endgroup%
}}.
\end{equation}

Now for $H$ a Hopf algebra in a braided monoidal category $(\cD, \Psi)$, we have  via \eqref{tensorproductaction} that  $\lmod{H}(\cD)$ is a monoidal category. Similarly, $\Alg(\cD)$ is a monoidal category: The tensor product of two algebras $A,B$ in $\cD$ is given by $(A\otimes B,m_{A\otimes B}, u_{A\otimes B})$, where
\begin{align*}
m_{A\otimes B}:=(m_A\otimes m_B)(\ide_A\otimes \Psi_{A,B}\otimes\ide_B) \qquad \text{and} \qquad u_{A \otimes B}:=u_A\otimes u_B.
\end{align*}

\subsection{Terminal objects and the comma category}
\label{sec:terminal}

We record some useful results about terminal objects and the comma category that we will use below. Recall that an object $T$ of $\cC$ is {\it terminal} if, for any object $X \in \cC$, there exists a unique morphism $X \to T$ in $\cC$.
If a terminal object exists, then it is unique up to unique isomorphism. By considering the morphisms $T \otimes T \to T$ (multiplication) and $I \to T$ (unit) in $\cC$, one can verify the associativity and unit axioms to obtain the following fact.

\begin{lemma} \label{lem:terminal}
The terminal object of a (braided) monoidal category is a (commutative) algebra. \qed
\end{lemma}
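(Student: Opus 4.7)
The plan is to exploit the defining universal property of a terminal object $T$: for every object $X \in \cC$, the hom-set $\Hom_\cC(X, T)$ is a singleton. This makes the entire verification of the algebra axioms essentially free, since any two parallel morphisms into $T$ must coincide.

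First I would define the structure maps: let $m \colon T \otimes T \to T$ be the unique morphism guaranteed by the terminal property applied to the object $T \otimes T$, and let $u \colon I \to T$ be the unique morphism from the monoidal unit $I$. Next I would check associativity. Both composites
\[
m \circ (m \otimes \ide_T), \quad m \circ (\ide_T \otimes m) \colon T \otimes T \otimes T \longrightarrow T
\]
are morphisms into $T$ from the object $T \otimes T \otimes T$, so by the terminal property they are equal. For the unit axioms, both $m \circ (u \otimes \ide_T)$ (after identifying $I \otimes T \cong T$ via the left unitor) and $\ide_T$ are morphisms $T \to T$, hence equal, and similarly for the right unit.

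For the braided case, assume $\cC$ carries a braiding $\Psi$. Then $m \circ \Psi_{T,T}$ and $m$ are two morphisms $T \otimes T \to T$; the terminal property forces $m \circ \Psi_{T,T} = m$, so $(T, m, u)$ is commutative.

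There is no serious obstacle here; the whole point is that terminality reduces every coherence check to the tautology that $\Hom_\cC(X, T)$ has at most one element, which in particular subsumes the associativity pentagon and the unit triangle. The only mild care needed is to invoke MacLane coherence to identify the various bracketings and unitor insertions, which the excerpt has already announced will be suppressed throughout.
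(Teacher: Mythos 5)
Your proof is correct and follows exactly the argument the paper intends: the paper's (sketched) justification is precisely to take $m\colon T\otimes T\to T$ and $u\colon I\to T$ as the unique morphisms into the terminal object and let uniqueness of morphisms into $T$ force associativity, unitality, and (in the braided case) commutativity. Nothing is missing; your spelled-out version of the unit axiom via the unitors and the commutativity check $m\Psi_{T,T}=m$ is the full content of the paper's ``one can verify'' remark.
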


Moreover, we get the following  fact from the uniqueness of morphisms to the terminal object.

\begin{lemma} \label{lem:terminal2}
If $A$ is an algebra in a monoidal category $\cC$, then the unique morphism $A\to T$ to the terminal object is a morphism of algebras in $\cC$.\qed
\end{lemma}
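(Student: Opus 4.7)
The plan is to verify the two conditions required of an algebra morphism $f\colon A \to T$, where $f$ is the unique morphism supplied by the terminality of $T$. Explicitly, I must check
\[
f \circ m_A = m_T \circ (f\otimes f)\colon A\otimes A \to T, \qquad f \circ u_A = u_T\colon I \to T,
\]
where $(m_T,u_T)$ is the algebra structure on $T$ produced in Lemma~\ref{lem:terminal}.

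Both of these equations compare parallel morphisms whose codomain is $T$. The key observation, which is the entire content of the proof, is that terminality of $T$ means there is exactly one morphism from any given object of $\cC$ to $T$. Applying this to the object $A\otimes A$ gives the first equality, and applying it to the tensor unit $I$ gives the second. This mirrors precisely the reasoning that produced $m_T$ and $u_T$ in Lemma~\ref{lem:terminal} in the first place.

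I do not anticipate any obstacle: the statement is essentially a reformulation of the universal property, once one notes that verifying the algebra-morphism axioms reduces to an equality of morphisms into a terminal object. The only small subtlety is to be explicit that both displayed morphisms really do land in $T$ (so that terminality applies), but this is immediate from how $m_T$ and $u_T$ are defined and from the functoriality of $\otimes$.
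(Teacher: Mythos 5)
Your proposal is correct and is exactly the paper's argument: the paper derives Lemma~\ref{lem:terminal2} directly ``from the uniqueness of morphisms to the terminal object,'' which is precisely your observation that $f\circ m_A$ and $m_T\circ(f\otimes f)$ are parallel morphisms $A\otimes A\to T$, and $f\circ u_A$ and $u_T$ are parallel morphisms $I\to T$, hence equal by terminality. Nothing is missing.
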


We will also need to use the next construction later. Take two monoidal categories $\cC$ and $\cC'$, a monoidal functor $F\colon \cC \to \cC'$, and an object $A \in \cC'$. Then, the {\it comma category} $F \hspace{-.05in} \downarrow \hspace{-.05in} A$ is the category of pairs $(X,x)$ with $X \in \cC$ and $x\colon F(X) \to A$ in $\cC'$, with morphisms being morphisms of the first components that are compatible with the morphisms of the second components. If $F$ is monoidal and $A \in \Alg{(\cC')}$, then $F \hspace{-.05in} \downarrow \hspace{-.05in} A$ is also monoidal with $$(X,x) \otimes (Y,y) := (X \otimes Y, ~m_A (x\otimes y) F^{-1}_{X,Y}\colon F(X \otimes Y) \to A).$$

\subsection{Left and right center} \label{sec:leftcenter}

Next, we describe how to associate to a given algebra $A$ in $\cD$ certain braided commutative algebras in $\cD$, analogous to the center. The definition below appears in \cite{Dav1}*{Section 5}, following (and equivalent to) \cite{Ost}*{Definition 5.1}; see also, \cite{vO-Z}*{Definition~4.3}.

\begin{definition} \label{def:leftcenter}
Let $(A,m)$ be an algebra in $(\cD,\Psi)$. The \emph{left center} $C^l(A)\to A$ of $A$ is the terminal object in the category of morphisms $\gamma\colon Z\to A$ such that  the following  diagram commutes.
\begin{align}\label{Cleftdiag}
\xymatrix@R=1.5pc@C=3pc{
Z\otimes A\ar[rr]^-{\Psi_{Z,A}}\ar[d]_{\gamma\otimes \ide}&&A\otimes Z\ar[d]^{\ide\otimes \gamma}\\
A\otimes A\ar[rd]_m&& A\otimes A\ar[ld]^m\\
&A&
}
\end{align}
Equivalently, it is defined as the maximal subobject $C^l(A)$ of $A$ such that $m\Psi_{C^l(A), A} = m$ as maps from $C^l(A) \otimes A$ to $A$ in $\cD$. 

Similarly, we define the \emph{right center} $C^r(A)\to A$ using $\Psi^{-1}$ instead of $\Psi$.
\end{definition}

\begin{proposition} \label{prop:leftcenter} \cite{Dav1}*{Proposition 5.1} The left center $C^l(A)$ has an algebra structure in $\cD$, unique up to unique isomorphism of algebras, such that $C^l(A)\to A$ is a morphism in $\Alg(\cD)$. In addition, $C^l(A) \in \ComAlg(\cD)$.  Similarly, $C^r(A) \in \ComAlg(\cD)$. \qed
\end{proposition}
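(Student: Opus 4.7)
The plan is to realize $C^l(A)$ as a terminal object in a monoidal subcategory of the comma category $\mathrm{id}_\cD \!\downarrow\! A$, and then to invoke Lemmas~\ref{lem:terminal}--\ref{lem:terminal2} to transfer the algebra structure.

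First, I would verify that the full subcategory $\cE \subset \mathrm{id}_\cD \!\downarrow\! A$ consisting of pairs $(Z,\gamma\colon Z \to A)$ satisfying the centralizing condition \eqref{Cleftdiag} is a monoidal subcategory. The unit $(I,u_A)$ lies in $\cE$ since $\Psi_{I,A}$ is trivial and $m_A(u_A \otimes \mathrm{id}_A) = \mathrm{id}_A = m_A(\mathrm{id}_A \otimes u_A)$. For closure under tensor, I would take $(Z_1,\gamma_1), (Z_2,\gamma_2) \in \cE$ with tensor product $(Z_1 \otimes Z_2,\; m_A(\gamma_1 \otimes \gamma_2))$, decompose $\Psi_{Z_1 \otimes Z_2, A} = (\Psi_{Z_1,A} \otimes \mathrm{id}_{Z_2})(\mathrm{id}_{Z_1} \otimes \Psi_{Z_2,A})$ via the hexagon axiom, and then apply the centralizing condition for $\gamma_2$ first and $\gamma_1$ second, combined with the associativity of $m_A$. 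This is the main technical (though straightforward) calculation, most cleanly performed in the graphical calculus introduced in Section~\ref{sec:notation}. By its definition as a terminal object in $\cE$ (equivalently, a maximal subobject satisfying the condition), $C^l(A)$ is the terminal object of $\cE$.

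Now Lemma~\ref{lem:terminal} applied to the monoidal category $\cE$ supplies an algebra structure $(C^l(A), m_{C^l(A)}, u_{C^l(A)})$ in $\cE$, and Lemma~\ref{lem:terminal2} ensures that the canonical morphism $\gamma\colon C^l(A) \to A$ is an algebra map in $\cD$, i.e.\ $\gamma\, m_{C^l(A)} = m_A(\gamma \otimes \gamma)$ and $\gamma\, u_{C^l(A)} = u_A$. Uniqueness up to unique isomorphism is immediate from the universal property of the terminal object.

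Finally, for braided commutativity, I would show $m_{C^l(A)}\,\Psi_{C^l(A),C^l(A)} = m_{C^l(A)}$ by appealing to the terminality of $C^l(A)$ in $\cE$. It suffices to check the equality after postcomposing with $\gamma$. Using naturality of $\Psi$, namely $(\gamma \otimes \mathrm{id}_A)\Psi_{C^l(A), C^l(A)} = \Psi_{C^l(A), A}(\mathrm{id}_{C^l(A)} \otimes \gamma)$ (applied in reverse, with one $\gamma$ slid through the braid), together with the centralizing condition \eqref{Cleftdiag} for $\gamma$ itself, one computes
\begin{align*}
\gamma\, m_{C^l(A)}\,\Psi_{C^l(A),C^l(A)}
&= m_A(\gamma \otimes \gamma)\,\Psi_{C^l(A),C^l(A)}\\
&= m_A(\mathrm{id}_A \otimes \gamma)\,\Psi_{C^l(A),A}(\mathrm{id}_{C^l(A)} \otimes \gamma)\\
&= m_A(\gamma \otimes \mathrm{id}_A)(\mathrm{id}_{C^l(A)} \otimes \gamma) = m_A(\gamma \otimes \gamma) = \gamma\, m_{C^l(A)}.
\end{align*}
Since $C^l(A) \otimes C^l(A)$ lies in $\cE$ by closure under tensor, the unique morphism in $\cE$ from it to $C^l(A)$ forces $m_{C^l(A)} \Psi = m_{C^l(A)}$, establishing $C^l(A) \in \ComAlg(\cD)$. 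The statement for $C^r(A)$ follows by the same argument with $\Psi$ replaced by $\Psi^{-1}$ throughout, using that $(\cD, \Psi^{-1})$ is again a braided monoidal category. The main obstacle is organizing the graphical computation in step two cleanly; once that is in place, steps three and four are formal consequences of the universal property.
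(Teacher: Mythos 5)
Your proposal is correct and is essentially the paper's own approach: the paper defers this statement to Davydov, but the proof it gives for the generalization in Proposition~\ref{prop:leftcentzer} (and for the braided analogue in Proposition~\ref{prop:ZBA}) is precisely your argument --- show that the category of centralizing pairs is monoidal, apply Lemmas~\ref{lem:terminal} and~\ref{lem:terminal2} to its terminal object, and deduce braided commutativity from terminality. The only slip is cosmetic: in your prose statement of naturality, $\mathrm{id}_A$ should read $\mathrm{id}_{C^l(A)}$; the displayed computation itself is correct.
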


\subsection{Left and right centralizers} \label{sec:braidcent}

We generalize the left and right center  constructions in the previous section as follows. 

\begin{definition} \label{def:leftcentzer}
Let $(A,m)$ be an algebra, $S$ be an object, and $\phi\colon S\to A$ be a morphism  in $(\cD, \Psi)$. The \emph{left  centralizer} $\Cent^l_A(S)\to A$ of $A$ is a terminal object in the category of morphisms  $\gamma \colon C\to A$ such that the following diagram commutes.
\begin{align}\label{diag:leftcentzer}
\xymatrix@R=1.5pc@C=3pc{
C\otimes S\ar[rr]^-{\Psi_{C,S}}\ar[d]_{\gamma\otimes \phi}&&S\otimes C\ar[d]^{\phi\otimes \gamma}\\
A\otimes A\ar[rd]_m&& A\otimes A\ar[ld]^m\\
&A&
}
\end{align}
Similarly, we define the \emph{right centralizer} $\Cent^r_A(S)\to A$ using $\Psi^{-1}$ instead of $\Psi$. 
\end{definition}

\begin{example} \label{ex:ClA} Let $A$ be an algebra in $\cD$.
\begin{enumerate}
\item Let $S=A$ and $\phi=\ide_A$. In this case, $\Cent^l_A(A)=C^l(A)$, and $\Cent^r_A(A)=C^r(A)$.

\smallskip

\item Let $S=I$, and let $\phi=u_A\colon I\to A$ be the unit of $A$. In this case, $\Cent^l_A(I)=\Cent^r_A(I)=A$.
\end{enumerate}
\end{example}

\begin{definition} \label{def:scriptC}
Let $A$ be an algebra, $S$ be an object, and $\phi\colon S\to A$ be a morphism in $\cD$. We denote by $\mathcal{C}^l_A(S)$  the category consisting of
\begin{itemize}
\item objects which are pairs $(C,\gamma)$, where $C$ is an object and $\gamma\colon C\to A$ is a morphism in $\cD$ that make Diagram~\ref{diag:leftcentzer} commute; with
\item morphisms $(C,\gamma)\to (C',\gamma')$ that are morphisms $f\colon C \to C'$ in $\cD$ such that the diagram below commutes.
\begin{align}
\xymatrix{C\ar^f[rr] \ar_{\gamma}[dr]&&C'\ar^{\gamma'}[dl]\\
&A&
}
\end{align}
\end{itemize}
From Example~\ref{ex:ClA}(1), we denote $\mathcal{C}^l_A(A)$ by $\mathcal{C}^l(A)$.
\end{definition}

Now  the left centralizer $\Cent^l_A(S)$ is the terminal object of $\mathcal{C}^l_A(S)$, and the left center $C^l(A)$ is the terminal object of $\mathcal{C}^l(A)$. Similar to Proposition~\ref{prop:leftcenter}, we have the result below.

\begin{proposition} \label{prop:leftcentzer}
The left centralizer $\Cent^l_A(S)$ has the structure of an algebra in $\cD$, which is  unique up to (unique) algebra isomorphism, such that  $\Cent^l_A(S)\to A$ is morphism in $\Alg(\cD)$. 
\end{proposition}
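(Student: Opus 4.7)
The plan is to adapt the proof of Proposition~\ref{prop:leftcenter}. I would endow $\mathcal{C}^l_A(S)$ with a monoidal structure so that its terminal object $\Cent^l_A(S)$ inherits an algebra structure via Lemma~\ref{lem:terminal}, and so that the terminal morphism $\Cent^l_A(S)\to A$ becomes a morphism in $\Alg(\cD)$ as a direct consequence of the definition of morphisms in $\mathcal{C}^l_A(S)$.

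First I would define a tensor product on $\mathcal{C}^l_A(S)$ by
\[
(C,\gamma_C)\otimes(C',\gamma_{C'}) \;:=\; \bigl(C\otimes C',\; m(\gamma_C\otimes \gamma_{C'})\bigr), \qquad \text{with unit } (I, u_A).
\]
Associativity and unitality of this tensor product descend from those of $\cD$ together with associativity of $m$. The essential check is that $(C\otimes C',\, m(\gamma_C\otimes \gamma_{C'}))$ again satisfies the centralizer condition with respect to $\phi$, that is,
\[
m\bigl(\phi\otimes m(\gamma_C\otimes \gamma_{C'})\bigr)\,\Psi_{C\otimes C',S}
\;=\; m\bigl(m(\gamma_C\otimes \gamma_{C'})\otimes \phi\bigr).
\]
I would verify this by graphical calculus: expand $\Psi_{C\otimes C',S} = (\Psi_{C,S}\otimes \ide_{C'})(\ide_C\otimes \Psi_{C',S})$ via the hexagon axiom, use associativity of $m$ to rewrite the left-hand side as $m(m(\phi\otimes \gamma_C)\otimes \gamma_{C'})(\Psi_{C,S}\otimes \ide_{C'})(\ide_C\otimes \Psi_{C',S})$, then invoke the centralizer condition~\eqref{diag:leftcentzer} for $(C,\gamma_C)$ to replace $m(\phi\otimes \gamma_C)\Psi_{C,S}$ by $m(\gamma_C\otimes \phi)$, followed by the analogous condition for $(C',\gamma_{C'})$ after a further reassociation; a final associativity step produces the right-hand side. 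The unit axiom for $(I,u_A)$ is immediate from $\Psi_{I,S}=\ide_S$ together with $m(\phi\otimes u_A)=\phi=m(u_A\otimes \phi)$.

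The main obstacle is this braided-associativity identity, whose verification genuinely uses the hexagon axiom and is sensitive to the order in which the two centralizer moves are performed---consistent with the introduction's remark that graphical calculus is more delicate in $\cB\ne \Vect$. Once $\mathcal{C}^l_A(S)$ is known to be monoidal, Lemma~\ref{lem:terminal} furnishes $\Cent^l_A(S)$ with a multiplication $\mu$ and unit $\eta$ in $\cD$, arising as the unique $\mathcal{C}^l_A(S)$-arrows from $(\Cent^l_A(S),\gamma)^{\otimes 2}$ and from $(I, u_A)$ into the terminal object. Uniqueness of this algebra structure up to unique algebra isomorphism is immediate from uniqueness of terminal objects. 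Finally, the triangle conditions built into morphisms of $\mathcal{C}^l_A(S)$ read $\gamma\circ \mu = m(\gamma\otimes \gamma)$ and $\gamma\circ\eta = u_A$, which is precisely the statement that the terminal morphism $\gamma\colon\Cent^l_A(S)\to A$ lies in $\Alg(\cD)$.
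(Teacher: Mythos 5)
Your proposal is correct and follows essentially the same route as the paper: you endow $\mathcal{C}^l_A(S)$ with the identical tensor product $(C,\gamma_C)\otimes(C',\gamma_{C'})=(C\otimes C',\,m(\gamma_C\otimes\gamma_{C'}))$, verify the centralizer condition for it by decomposing $\Psi_{C\otimes C',S}$ via the hexagon axiom and applying \eqref{diag:leftcentzer} to each factor (this is exactly the content of the paper's large commutative diagram, written equationally), and then invoke the terminal-object formalities of Section~\ref{sec:terminal}. The only cosmetic difference is that you spell out the unit object $(I,u_A)$ and re-derive Lemmas~\ref{lem:terminal} and~\ref{lem:terminal2} inline, which the paper leaves implicit.
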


\begin{proof} 
By the discussion above and the material in Section~\ref{sec:terminal}, it suffices to show that $\mathcal{C}^l_A(S)$ is a monoidal category. Towards this, take $(C_1, \gamma_1)$ and $(C_2, \gamma_2)$ in $\mathcal{C}^l_A(S)$ and define
$$(C_1, \gamma_1) \otimes (C_2, \gamma_2) := (C_1 \otimes C_2, \; m_A(\gamma_1\otimes \gamma_2)).$$ Similar to \cite{Dav2}*{Remark~4.2}, this  definition satisfies \eqref{diag:leftcentzer} via the commutative diagram below. 

\medskip

\begin{align*} \label{eq:ClSAmon}
\small{
\hspace{-.1in}
\xymatrix@C=.75pc@R=1.4pc{
C_1 \otimes C_2 \otimes S 
\ar@/^1.7pc/[rrrrrr]^{\Psi_{C_1 \otimes C_2, S}}
\ar[rrr]^{\text{Id} \otimes \Psi_{C_2, S}}
\ar[rd]^{\gamma_1 \otimes \text{Id} \otimes \text{Id}}
\ar[dd]_{\gamma_1 \otimes \gamma_2 \otimes \phi}&&&
C_1 \otimes S \otimes C_2 
\ar[rrr]^{\Psi_{C_1, S} \otimes \text{Id}}
\ar[ld]_{\gamma_1 \otimes \text{Id} \otimes \text{Id}}
\ar[rd]^{\text{Id} \otimes \text{Id} \otimes \gamma_2}&&&
S \otimes C_1 \otimes C_2
\ar[ld]_{\text{Id} \otimes \text{Id} \otimes \gamma_2}
\ar[dd]^{\phi \otimes \gamma_1 \otimes \gamma_2}\\
&A \otimes C_2 \otimes S
\ar[r]^{\hspace{.1in}\text{Id} \otimes \Psi}
\ar[dl]^{\text{Id} \otimes \gamma_2 \otimes \phi}
&
A \otimes S \otimes C_2 
\ar[dr]_{\text{Id} \otimes \phi \otimes \gamma_2}&&
C_1 \otimes S \otimes A 
\ar[r]^{  \Psi \otimes \text{Id}}
\ar[dl]^{\gamma_1 \otimes \phi \otimes \text{Id}}&
S \otimes C_1 \otimes A
\ar[dr]_{\phi \otimes \gamma_1 \otimes \text{Id}}&\\
A \otimes A \otimes A 
\ar[dd]_{m \otimes \text{Id}}
\ar[dr]_{\text{Id} \otimes  m}
&&&
A \otimes A \otimes A
\ar[drr]_{m \otimes \text{Id}}
\ar[dll]^{\text{Id}  \otimes  m}&&&
A \otimes A \otimes A
\ar[dd]^{\text{Id} \otimes m }
\ar[dl]^{m \otimes \text{Id}}\\
&A \otimes A 
\ar[ddrr]_{m}&&&&
A \otimes A 
\ar[ddll]^{m}&\\
A \otimes A 
\ar[drrr]_{m} &&&&&&
A \otimes A
\ar[dlll]^{m}\\
&&&A&&&
}}
\end{align*}
\vspace{-.25in}

\end{proof}

Unlike Proposition~\ref{prop:leftcenter} (for $S=A$), neither Cent$_A^l(S)$ or Cent$_A^r(S)$ necessarily belongs to $\ComAlg(\cD)$: To see this use Example~\ref{ex:ClA}(2). In any case, consider the example below.

\begin{example} \label{ex:CentlA}
Suppose that $\cD$ is the category of modules over a quasi-triangular Hopf algebra (so it comes equipped with a fiber functor to $\Vect$ and objects have elements). 
Then, for any subobject $S\subset A$, the left centralizer $\Cent^l_A(S)$ is isomorphic to  the following subalgebra of $A$:
\begin{align*}
\Cent^l_A(S)=\Set{a\in A~\middle|~ m(a\otimes s)=m\Psi(a\otimes s),~~ \forall s\in S}.
\end{align*}
\end{example}


\medskip

\section{The functor \texorpdfstring{$\rR_\cB$}{RB} and the \texorpdfstring{$\cB$}{B}-center}\label{sec:main}
We present the main results of our work in this section. First, we discuss background material on $\cB$-central monoidal categories and relative monoidal centers $\cZ_\cB(\cC)$ adapted from  \cite{L18} in Section~\ref{sec:augment}.  In Section~\ref{sec:RB}, we study the right adjoint $\rR_\cB$ to the forgetful functor $\cZ_\cB(\cC) \to \cC$, and show that it exists when $\cZ_\cB(\cC)$ is a Yetter--Drinfeld category $\lYD{H}(\cB)$ in $\cB$ [Theorem~\ref{prop:RB-YD}]; the lax monoidal property of $\rR_\cB$ is also discussed.  Next, in Section~\ref{sec:Bcenter}, we generalize Davydov's full center construction \cite{Dav1}*{Section 4} to the $\cB$-central setting, thus producing braided commutative algebras $Z_\cB(A)$ in $\cZ_\cB(\cC)$ from algebras $A$ in~$\cC$ [Proposition~\ref{prop:ZBA}]. In Section~\ref{sec:Bctr-lfctr},  we show that  for $A \in \Alg(\cC)$ we have an isomorphism $Z_\cB(A) \cong C^l(\rR_\cB(A))$ of algebras in $\ComAlg(\cZ_\cB(\cC))$ if $\rR_\cB$ exists and its adjunction counit is an epimorphism [Theorem~\ref{thm:Bcenter}]; this generalizes \cite{Dav1}*{Theorem 5.4}. In Section~\ref{sec:Morita}, we establish that $\cB$-centers serve as Morita invariants for algebras in $\cB$-central monoidal categories [Theorem~\ref{thm:Morita}]. Finally, in Section~\ref{sec:Cbraided}, we restrict our attention to the case when $\cC$ is braided and central over itself and present the results of the previous subsections in this setting.

\subsection{\texorpdfstring{$\cB$}{B}-central monoidal categories and relative monoidal centers} \label{sec:augment}

Recall $\cB:=(\cB, \Psi)$ denotes a braided monoidal category. From now on $\cC$ will be  a monoidal category of the kind~below.

\begin{definition} \label{def:augment}
We say that a monoidal category $\cC$ is {\it $\cB$-central monoidal} if it comes equipped with a faithful monoidal functor, called the \emph{central functor},
$$\rT\colon \cB \to \cC 
$$ 
and a natural isomorphism
$$
\sigma\colon \otimes_\cC (\text{Id}_\cC \boxtimes \rT) \overset{\sim}{\implies} \otimes_\cC^{\oop} (\text{Id}_\cC \boxtimes \rT)$$
such that the assignment $B\mapsto (\rT(B),\sigma^{-1}_{B,-})$  defines a braided monoidal functor
$\overline{\cB}\to \cZ(\cC),$ where $\overline{\cB}=(\cB,\otimes,\Psi^{-1})$, i.e. the monoidal category $\cB$ with inverse braiding.
\end{definition}
The definition above generalizes the concept of a $\cB$-augmented monoidal category of \cite{L18}*{Section~3.3} relaxing the condition that the functor $\rT$ has a left inverse. See also \cite{DMNO}*{Definition~2.4} for the concept of a central functor.

\begin{example} \label{ex:augment}  \cite{L18}*{Section~3.3} Below are examples of $\cB$-central monoidal  categories.
\begin{enumerate}
\item By  the assumptions in Section~\ref{sec:notation}, all monoidal categories in this work are $\Vect$-central.

\smallskip

\item We have that $\cB$ is $\cB$-central with $\rT = \text{Id}_\cB$ and 
$\sigma = \Psi$. 
\smallskip

\item Take $H$ a Hopf algebra in $\cB$, and consider the category $\cC := \lmod{H}(\cB)$ of left $H$-modules in $\cB$. This category is monoidal: Take $(V, a_V\colon H \otimes V \to V)$ and $(W, a_W\colon H \otimes W \to W)$ in $\cC$ and we get that $(V \otimes W, a_{V \otimes W}) \in \cC$ with 
$a_{V \otimes W}$ defined as in Equation~\ref{tensorproductaction}.
Moreover, $\cC$ is $\cB$-central monoidal 
with $\rT$ giving an object of $\cB$ the structure of a trivial $H$-module in $\cB$ (via the counit of $H$), and 
$\sigma:= \rT(\Psi_{\rF(V),B})$ for all $V \in \cC$ and $B \in \cB$.
\end{enumerate}
\end{example}
 
Example~\ref{ex:augment}(3) will play a crucial role in our work later. Next, we define the relative monoidal center of a $\cB$-central monoidal category $\cC$.

\begin{definition} \cite{L18}*{Definition~3.32, Propositions~3.33 and~3.34} \label{def:ZBC}
The {\it relative monoidal center} $\cZ_\cB(\cC)$ of a $\cB$-central monoidal category $\cC$ is a braided monoidal category consisting of pairs $(V,c)$, where $V$ is an object of $\cC$, and $c := c_{V,-}\colon V \otimes \text{Id}_\cC \overset{\sim}{\Rightarrow} \text{Id}_\cC \otimes V$  is a natural isomorphism of half-braidings satisfying the two conditions below:
\begin{enumerate}
\item[(i)] [tensor product compatibility] for any $X, Y \in \cC$ the following diagram commutes: 
\[
\xymatrix{
V \otimes X \otimes Y \ar[rr]^{c_{V,X\otimes Y}} 
\ar[dr]_{c_{V,X}\otimes \text{Id}_Y}
&& X \otimes Y \otimes V
\\
&X \otimes V \otimes Y\ar[ur]_{~\text{Id}_X \otimes c_{V,Y}}&}
\]
\smallskip
\item[(ii)] [compatibility with the central functor] for any $B \in \cB$ we have $$c_{V,\rT(B)}  = \sigma_{V,B}.$$
\end{enumerate}
A morphism from $(V,c_{V,-})$ to $(W,c_{W,-})$ is defined to be a morphism $f\colon V \to W$ in $\cC$ so that for each $X \in \cC$ we have $$(\text{Id}_X \otimes f) c_{V,X} = c_{W,X}  (f \otimes \text{Id}_X).$$
Here, the monoidal structure is given by 
$$
(V,c_{V,-}) \otimes (W,c_{W,-}):= (V \otimes W, ~\{c_{V \otimes W,X} := (c_{V,X}\otimes\text{Id}_W)(\text{Id}_V \otimes c_{W,X})\}_{X\in \cC}),
$$
and the braiding is given by 
$$\Psi_{(V,c_{V,-}),(W,c_{W,-})} = c_{V,W}.$$
\end{definition}

\begin{remark}
The relative center $\cZ_\cB(\cC)$ is the \emph{M\"uger centralizer} \cite{Mue}*{Definition 2.6} of the set of objects $(\rT(B),
\sigma^{-1}_{B,V})$ in $\cZ(\cC)$.

In \cite{L18}, the relative monoidal center $\cZ_\cB(\cC)$ is defined (equivalently) as the monoidal category of $\cB$-balanced endofunctors $\rG$ of the regular $\cC$-bimodule category $\cC$; here, using composition as the tensor product, i.e., $\rG \otimes \rG' = \rG' \rG$, this is also a braided monoidal category.
\end{remark}

Relative monoidal categories have the following properties, some of which hold by definition. Here, we use \cite{L18}*{Proposition~3.34} to employ other results from \cite{L18}.

\begin{proposition} \label{prop:rel-center} 
Let $\cC$ be a $\cB$-central monoidal category over  a braided monoidal category $\cB$.
\begin{enumerate}
\item[\textnormal{(1)}] If $\cC = \cB$, then $\cZ_\cB(\cB)$ is isomorphic to $\cB$ as braided monoidal categories.

\smallskip

\item[\textnormal{(2)}] \cite{L18}*{Example~3.30} If $\cB = \Vect$, then $\cZ_\cB(\cC)$ is isomorphic to the (ordinary) monoidal center $\cZ(\cC)$ (e.g., as in \cite{EGNO}*{Definition~7.13.1}).

\smallskip

\item[\textnormal{(3)}] For $\cB$ arbitrary, $\cZ_\cB(\cC)$ is a full braided monoidal subcategory of $\cZ(\cC)$.

\smallskip

\item[\textnormal{(4)}] \cite{L18}*{Theorem~3.29} If $\cC$ is rigid (or pivotal), then so is $\cZ_\cB(\cC)$.  \qed

\smallskip

\end{enumerate}
\end{proposition}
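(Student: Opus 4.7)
The plan is to dispose of each part in turn, noting that parts (2) and (4) are already attributed to \cite{L18}; for those I would simply specialize the cited results to our setting (using Definition~\ref{def:ZBC} and the identification from \cite{L18}*{Proposition~3.34}) and record that the forgetful functor to $\cC$ transports the rigid or pivotal structure. That leaves parts (1) and (3) as the actual content to verify.

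For part (3), I would construct the forgetful functor $\cF\colon \cZ_\cB(\cC) \to \cZ(\cC)$ by sending $(V,c_{V,-})$ to the same pair, forgetting condition (ii) of Definition~\ref{def:ZBC}. Since the underlying objects of $\cZ(\cC)$ are precisely pairs $(V,c_{V,-})$ satisfying only (i) and the naturality axiom, and since morphisms are defined by the identical intertwining condition in both categories, $\cF$ is manifestly well-defined and fully faithful. The monoidal structure on $\cZ_\cB(\cC)$ given in Definition~\ref{def:ZBC} and the braiding $\Psi_{(V,c),(W,c)} = c_{V,W}$ are defined by literally the same formulas as in $\cZ(\cC)$; hence $\cF$ is a strict braided monoidal embedding, realizing $\cZ_\cB(\cC)$ as a full braided monoidal subcategory.

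For part (1), I would specialize Definition~\ref{def:augment} and Example~\ref{ex:augment}(2): when $\cC=\cB$, we have $\rT=\ide_\cB$ and $\sigma = \Psi$. Thus condition (ii) of Definition~\ref{def:ZBC} forces $c_{V,X} = \Psi_{V,X}$ for every $X \in \cB = \cC$, so each object of $\cZ_\cB(\cB)$ is uniquely determined by its underlying object $V \in \cB$. Conversely, the hexagon axiom for $\Psi$ is precisely condition (i), and naturality of $\Psi$ gives the required naturality of the half-braiding, so the assignment $\rG\colon \cB \to \cZ_\cB(\cB)$, $V \mapsto (V,\Psi_{V,-})$, is well-defined. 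Essential surjectivity was just observed; fullness and faithfulness follow because a morphism $f\colon V\to W$ in $\cB$ automatically satisfies $(\ide\otimes f)\Psi_{V,X}= \Psi_{W,X}(f\otimes\ide)$ by naturality of $\Psi$.

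It remains to check that $\rG$ is braided monoidal. The tensor product half-braiding on $\rG(V) \otimes \rG(W)$ computed from Definition~\ref{def:ZBC} is
\[
(\Psi_{V,X}\otimes \ide_W)(\ide_V\otimes \Psi_{W,X}) = \Psi_{V\otimes W,X}
\]
by the hexagon axiom, matching the half-braiding of $\rG(V\otimes W)$; and the braiding on $\cZ_\cB(\cB)$ evaluated at $(\rG(V),\rG(W))$ is $c_{V,W} = \Psi_{V,W}$ by construction. The only place where one has to be careful is reconciling the hexagon identity with the orientation convention (since Definition~\ref{def:augment} uses $\overline{\cB} = (\cB,\Psi^{-1})$ for the central functor); I would handle this by explicitly tracking $\sigma = \Psi$ against $\sigma^{-1}_{B,V}$ in Definition~\ref{def:augment}, which is the only subtle point and thus the main (mild) obstacle in the proof.
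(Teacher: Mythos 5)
Your proposal is correct and follows essentially the same route as the paper: the paper offers no written proof, stating only that the claims ``hold by definition'' or follow from the cited results of \cite{L18}, and your verifications of (1) and (3) are precisely the definition-unwinding arguments left implicit there (condition (ii) of Definition~\ref{def:ZBC} forcing $c_{V,-}=\Psi_{V,-}$ when $\rT=\ide_\cB$, $\sigma=\Psi$; and the forgetful functor to $\cZ(\cC)$ being a full, strict braided monoidal embedding since objects, morphisms, tensor products, and braidings are given by identical formulas). Your deferral of (2) and (4) to \cite{L18} matches the paper's own treatment.
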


Note that the forgetful functor $\cZ_\cB(\cC) \to \cC$ does not necessarily have a right adjoint, but we show later in Lemma~\ref{lem:RB} that if a right adjoint exists, then it is lax monoidal. In particular, such a right adjoint exists for the $\cB$-central monoidal category $\lmod{H}(\cB)$ from Example~\ref{ex:augment}(3); see Theorem~\ref{prop:RB-YD}. Toward this result, consider the following explicit description of the relative monoidal center of $\lmod{H}(\cB)$ in terms of \emph{crossed} or \emph{Yetter--Drinfeld modules}.

\begin{definition}  \label{def:HYD(B)} \cites{Bes,BD}
Take a Hopf algebra $H$ in $\cB$, and take $\cC:=\lmod{H}(\cB)$ from Example~\ref{ex:augment}(3) with left $H$-action in $\cB$ denoted by $a:=a_V:=a_V^H$. Then the category $\lYD{H}(\cB)$ of {\it $H$-Yetter--Drinfeld modules in $\cB$} consists of objects $(V,a,\delta)$ where $(V,a) \in \cC$ with left $H$-coaction  in $\cB$ denoted by $\delta:=\delta_V:=\delta_V^H$, subject to compatibility condition:
\begin{align}\begin{split} \label{eq:HYDB}
&(m_H \otimes a_V)(\text{Id}_H \otimes \Psi_{H,H} \otimes \text{Id}_V)(\Delta_H \otimes \delta_V)\\
&\quad = (m_H \otimes \text{Id}_V)(\text{Id}_H \otimes \Psi_{V,H})(\delta_V \otimes \text{Id}_H) (a_V \otimes \text{Id}_H) (\text{Id}_H \otimes \Psi_{H,V})(\Delta_H \otimes \text{Id}_V).
\end{split}
\end{align}
A morphism $f\colon (V,a_V,\delta_V)\to (W,a_W,\delta_W)$ in $\lYD{H}(\cB)$ is given by a morphism $f\colon V\to W$ in $\cB$ that is a morphism of $H$-modules and of $H$-comodules. 

Given two objects $(V,a_V,\delta_V)$ and $(W,a_W,\delta_W)$ in $\lYD{H}(\cB)$, their tensor product  is given by $(V\otimes W, a_{V\otimes W}, \delta_{V\otimes W})$, where
 $a_{V\otimes W}$  is  as in Equation~\ref{tensorproductaction} and 
\begin{align*}
\delta_{V\otimes W}=(m_H\otimes \ide_{V\otimes W})(\ide_H\otimes \Psi_{H,V}\otimes \ide_W)(\delta_V\otimes \delta_W)=\vcenter{\hbox{
\begingroup%
  \makeatletter%
  \providecommand\color[2][]{%
    \errmessage{(Inkscape) Color is used for the text in Inkscape, but the package 'color.sty' is not loaded}%
    \renewcommand\color[2][]{}%
  }%
  \providecommand\transparent[1]{%
    \errmessage{(Inkscape) Transparency is used (non-zero) for the text in Inkscape, but the package 'transparent.sty' is not loaded}%
    \renewcommand\transparent[1]{}%
  }%
  \providecommand\rotatebox[2]{#2}%
  \ifx\svgwidth\undefined%
    \setlength{\unitlength}{30.44988496bp}%
    \ifx\svgscale\undefined%
      \relax%
    \else%
      \setlength{\unitlength}{\unitlength * \real{\svgscale}}%
    \fi%
  \else%
    \setlength{\unitlength}{\svgwidth}%
  \fi%
  \global\let\svgwidth\undefined%
  \global\let\svgscale\undefined%
  \makeatother%
  \begin{picture}(1,0.88991524)%
    \put(0,0){\includegraphics[width=\unitlength,page=1]{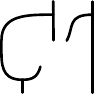}}%
    \put(0.5005321,0.41273921){\color[rgb]{0,0,0}\makebox(0,0)[lb]{\smash{$~$}}}%
    \put(0,0){\includegraphics[width=\unitlength,page=2]{tensorcoaction.pdf}}%
  \end{picture}%
\endgroup%
}}.
\end{align*}

Here, the braiding of $\lYD{H}(\cB)$ is given by
\begin{align} \label{eq:YDbraid}
\Psi^{\bf YD}_{V,W} = (a_W \otimes \text{Id}_V)(\text{Id}_H \otimes \Psi^{\cB}_{V,W})(\delta_V \otimes \text{Id}_W)= \vcenter{\hbox{
\begingroup%
  \makeatletter%
  \providecommand\color[2][]{%
    \errmessage{(Inkscape) Color is used for the text in Inkscape, but the package 'color.sty' is not loaded}%
    \renewcommand\color[2][]{}%
  }%
  \providecommand\transparent[1]{%
    \errmessage{(Inkscape) Transparency is used (non-zero) for the text in Inkscape, but the package 'transparent.sty' is not loaded}%
    \renewcommand\transparent[1]{}%
  }%
  \providecommand\rotatebox[2]{#2}%
  \ifx\svgwidth\undefined%
    \setlength{\unitlength}{32.00101968bp}%
    \ifx\svgscale\undefined%
      \relax%
    \else%
      \setlength{\unitlength}{\unitlength * \real{\svgscale}}%
    \fi%
  \else%
    \setlength{\unitlength}{\svgwidth}%
  \fi%
  \global\let\svgwidth\undefined%
  \global\let\svgscale\undefined%
  \makeatother%
  \begin{picture}(1,0.77737724)%
    \put(0,0){\includegraphics[width=\unitlength]{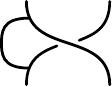}}%
  \end{picture}%
\endgroup%
}},
\end{align}
with inverse given by
\begin{align} \label{eq:YDbraidinv}
(\Psi^{\bf YD}_{V,W})^{-1} = (\ide_W\otimes (a_V\Psi_{V,H}^{-1}))(\Psi^{-1}_{V,W}\otimes S^{-1})(\ide_V\otimes (\Psi^{-1}_{H,V}\delta_W))=\vcenter{\hbox{
\begingroup%
  \makeatletter%
  \providecommand\color[2][]{%
    \errmessage{(Inkscape) Color is used for the text in Inkscape, but the package 'color.sty' is not loaded}%
    \renewcommand\color[2][]{}%
  }%
  \providecommand\transparent[1]{%
    \errmessage{(Inkscape) Transparency is used (non-zero) for the text in Inkscape, but the package 'transparent.sty' is not loaded}%
    \renewcommand\transparent[1]{}%
  }%
  \providecommand\rotatebox[2]{#2}%
  \newcommand*\fsize{\dimexpr\f@size pt\relax}%
  \newcommand*\lineheight[1]{\fontsize{\fsize}{#1\fsize}\selectfont}%
  \ifx\svgwidth\undefined%
    \setlength{\unitlength}{33.12888489bp}%
    \ifx\svgscale\undefined%
      \relax%
    \else%
      \setlength{\unitlength}{\unitlength * \real{\svgscale}}%
    \fi%
  \else%
    \setlength{\unitlength}{\svgwidth}%
  \fi%
  \global\let\svgwidth\undefined%
  \global\let\svgscale\undefined%
  \makeatother%
  \begin{picture}(1,0.92853411)%
    \lineheight{1}%
    \setlength\tabcolsep{0pt}%
    \put(0,0){\includegraphics[width=\unitlength,page=1]{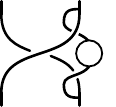}}%
    \put(0.64881035,0.38124942){\color[rgb]{0,0,0}\makebox(0,0)[lt]{\lineheight{0}\smash{\begin{tabular}[t]{l}$-$\end{tabular}}}}%
  \end{picture}%
\endgroup%
}},
\end{align}
cf. \cite{Bes}*{Theorem 3.4.3}. 
\end{definition}

\begin{proposition} \cite{L18}*{Proposition~3.36} \label{prop:YD}
For $H$ a Hopf algebra in $\cB$, consider the $\cB$-central monoidal category $\cC:= \lmod{H}(\cB)$  from Example~\ref{ex:augment}(3). Then, there is an equivalence of braided monoidal categories
$$\cZ_\cB(\cC) \overset{\sim}{\rightarrow} \lYD{H}(\cB),$$
where $(V,c) \mapsto V$ with  left $H$-module structure $a_V:= a_{V,c}\colon H \otimes V \to V$ from $\cC$, 
and $H$-coaction given by
$\delta_V := \delta_{V,c}:= c_{V,H} (\textnormal{Id}_V \otimes u_H)\colon V \to H \otimes V.$
\qed
\end{proposition}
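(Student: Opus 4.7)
The plan is to exhibit an explicit quasi-inverse pair of functors between $\cZ_\cB(\cC)$ and $\lYD{H}(\cB)$ and check they respect the monoidal and braided structure. First I would define $F\colon \cZ_\cB(\cC) \to \lYD{H}(\cB)$ on objects by the prescription in the statement: the $H$-action $a_V$ is inherited from $V \in \cC = \lmod{H}(\cB)$, and the coaction is $\delta_V := c_{V,H}(\ide_V \otimes u_H)$. Coassociativity and counitality of $\delta_V$ follow from the tensor product compatibility diagram for the half-braiding $c_{V,-}$ evaluated at $(H, H)$, specialized via naturality against $\Delta_H$ and $u_H \otimes u_H$, together with the counit axiom. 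The Yetter--Drinfeld axiom~\eqref{eq:HYDB} is then extracted from the fact that $c_{V,H}\colon V \otimes H \to H \otimes V$ is a morphism of $H$-modules in $\cC$; pre-composing with $\ide_V \otimes \Delta_H$ and applying $\ide_H \otimes m_H$, one recovers the graphical identity \eqref{eq:HYDB} after using that the $H$-module structure on $H \otimes H$ is built via the braiding $\Psi_\cB$.

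Next I would build the candidate inverse $G\colon \lYD{H}(\cB) \to \cZ_\cB(\cC)$ by sending $(V, a_V, \delta_V)$ to the pair $(V, c)$ with $c_{V,W} := \Psi^{\bf YD}_{V,W}$ from \eqref{eq:YDbraid} for every $W \in \cC$; invertibility of each $c_{V,W}$ is provided by \eqref{eq:YDbraidinv}. The two defining conditions of Definition~\ref{def:ZBC} must be verified: the tensor product compatibility reduces via a diagrammatic manipulation to the fact that the coaction on $V \otimes W$ in $\lYD{H}(\cB)$ is given by the formula displayed in Definition~\ref{def:HYD(B)}, and the compatibility with the central functor $\rT\colon \cB \to \cC$ reduces to the identity $\Psi^{\bf YD}_{V, \rT(B)} = \sigma_{V,B}$. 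The latter holds because $\rT(B)$ carries the trivial action via $\varepsilon_H$, so the formula \eqref{eq:YDbraid} collapses (via $a_{\rT(B)}(\ide \otimes \varepsilon) = \ide$) to the underlying braiding of $\cB$ on $V$ and $B$, which matches $\sigma$ by construction in Example~\ref{ex:augment}(3).

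Finally I would show $F$ and $G$ are mutually inverse and strong monoidal. The identity $F \circ G = \ide$ is immediate upon evaluating $\Psi^{\bf YD}_{V,H}$ against $\ide_V \otimes u_H$, where the $H$-action in the definition of the YD-braiding collapses against the unit to give back $\delta_V$. For $G \circ F = \ide$, the key point is that a half-braiding $c_{V,-}$ satisfying (i)--(ii) of Definition~\ref{def:ZBC} is reconstructible from its component $c_{V,H}$: given any $(W,a_W) \in \cC$, the morphism $a_W\colon H \otimes W \to W$ is an epimorphism in $\cC$ (split by $u_H \otimes \ide_W$), and naturality of $c_{V,-}$ applied to $a_W$ together with the tensor product compatibility forces $c_{V,W} = \Psi^{\bf YD}_{V,W}$ for the $H$-comodule $\delta_V$ extracted by $F$. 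Monoidality and preservation of the braiding are then straightforward comparisons of the respective formulas.

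The main obstacle is the Yetter--Drinfeld verification and the reconstruction of $c_{V,W}$ from $c_{V,H}$: both require nontrivial graphical calculations that are sensitive to the ambient braiding $\Psi_\cB$ (as opposed to the symmetric case when $\cB = \Vect$), so one must carefully distinguish $\Psi$ from $\Psi^{-1}$ when tracking crossings of $H$-strands with $V$-strands. A second subtle point is condition (ii) on compatibility with $\rT$, which is what distinguishes the relative center $\cZ_\cB(\cC)$ from the full center $\cZ(\cC)$ and which pins down that the coaction must be valued in $H$ rather than in any larger object.
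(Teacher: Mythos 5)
The paper does not actually prove this proposition: it is quoted verbatim from \cite{L18}*{Proposition~3.36} with an immediate \qed. So your self-contained argument is by necessity a different route, and in outline it is the standard one: define $F$ as in the statement, define $G$ via the Yetter--Drinfeld braiding \eqref{eq:YDbraid}, and show they are mutually inverse braided monoidal functors. Several of your steps are correct as sketched: coassociativity of $\delta_V$ does follow from tensor compatibility at $(H,H)$ plus naturality of $c_{V,-}$ against $\Delta_H$ (which is a morphism in $\cC$ precisely by the bialgebra axiom) and $\Delta_H u_H = u_H \otimes u_H$; the verification of Definition~\ref{def:ZBC}(ii) for $G$ via counitality is right; and $F\circ G = \ide$ by collapsing the action against $u_H$ is right.

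The genuine gap is in the reconstruction step, which as written would fail. You assert that $a_W\colon H\otimes W \to W$ is an epimorphism in $\cC$ split by $u_H\otimes \ide_W$, and apply naturality of $c_{V,-}$ to it. If $H\otimes W$ means the monoidal product in $\cC$ of the regular module $H$ with $W$ (the natural reading), then $a_W$ is \emph{not} a morphism in $\cC$: already for $\cB=\Vect$ one has $h\cdot(a\otimes w)=h_{(1)}a\otimes h_{(2)}w \mapsto (h_{(1)}a\,h_{(2)})\cdot w$, which differs from $h\cdot a_W(a\otimes w)=(ha)\cdot w$, so naturality cannot be invoked. The correct source object is the \emph{free} module $H\otimes \rT\rF(W)$ on the underlying $\cB$-object of $W$ (with $H$ acting by multiplication on the first factor only); for that object $a_W$ is a morphism in $\cC$, while the splitting $u_H\otimes\ide_W$ exists only in $\cB$ (which still suffices, since one may simply pre-compose the naturality identity with it). Moreover, condition (ii) of Definition~\ref{def:ZBC} is needed exactly at this point, to identify $c_{V,\rT\rF(W)}$ with $\Psi_{V,W}$ after expanding $c_{V,H\otimes\rT\rF(W)}$ by tensor compatibility; this---and not the assertion that ``the coaction is valued in $H$,'' which is automatic from the definition of $\delta_V$---is where the relative-center hypothesis enters. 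Finally, note that your derivation of the Yetter--Drinfeld axiom for $F$ secretly requires this same reconstruction at $W=H$: $H$-linearity of $c_{V,H}$ pre-composed with the unit only yields that the left-hand side of \eqref{eq:HYDB} equals $c_{V,H}(a_V\otimes\ide_H)(\ide_H\otimes\Psi_{H,V})(\Delta_H\otimes\ide_V)$, and to obtain the right-hand side of \eqref{eq:HYDB} one must substitute $c_{V,H}=(m_H\otimes\ide_V)(\ide_H\otimes\Psi_{V,H})(\delta_V\otimes\ide_H)$. So the logical order should be reversed: prove the (corrected) reconstruction lemma first; then both the Yetter--Drinfeld axiom and $G\circ F=\ide$ fall out of it. With these repairs your argument goes through.
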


Next, we illustrate Proposition~\ref{prop:rel-center}(3) for $\cC:= \lmod{H}(\cB)$ on the level of objects below.

\begin{example}\label{propersubcat}
Let $K$ be a quasi-triangular Hopf algebra  in $\Vect$ with $\cal{R}$-matrix $R^{(1)} \otimes R^{(2)}$, and let $H$ be a Hopf algebra in $\cB=\lmod{K}$. Then the smash product algebra $H\rtimes K$ is a Hopf algebra in $\Vect$ (called \emph{bosonization} or \emph{Radford biproduct}) such that 
$$\cC:=\lmod{H}(\cB)\simeq \lmod{H\rtimes K},$$ 
see \cite{Majid}*{Theorem 9.4.12}.
In this case, $$\cZ(\cC)\simeq \lYD{H\rtimes K}(\Vect)=:\lYD{H\rtimes K}.$$ On the other hand, by Proposition~\ref{prop:YD},
$$\cZ_\cB(\cC) \simeq \lYD{H}(\cB).$$ There is a functor from $\lYD{H}(\cB)$ to $\lYD{H\rtimes K}$, where the object
$(V,a^H_V, a^K_V,\delta^H_V) \in \lYD{H}(\cB)$ with coaction $\delta^H_V(v)=v_{(-1)}\otimes v_{(0)}$ is mapped to the object
$(V,~a^{H\rtimes K}_V, ~\delta^{H\rtimes K}_V) \in \lYD{H\rtimes K}$ with
$$a^{H\rtimes K}_V :=a^H_V(\text{Id}_H\otimes a^K_V) \qquad \text{and} \qquad
\delta^{H\rtimes K}_V(v)=v_{(-1)}\otimes R^{(2)}\otimes a_V^K(R^{(1)} \otimes v_{(0)}).$$
Hence, the essential image of this functor consists precisely of objects of $\lYD{H\rtimes K}$ isomorphic to those where the coaction $\delta$ restricted to $K$ has the form $\delta(v)=R^{(2)}\otimes a_{H\rtimes K}(R^{(1)} \otimes v)$, i.e. is induced from the action $a_{H\rtimes K}$ by using the universal $R$-matrix. This illustrates how a relative monoidal center $\cZ_\cB(\cC)$ is a proper subcategory of the monoidal center $\cZ(\cC)$ when $\cB$ is inequivalent to $\Vect$.
\end{example}

\smallskip

\subsection{A right adjoint to the forgetful  functor}\label{sec:RB} 

Let $\cC$ be a $\cB$-central monoidal category. Consider the forgetful functor $$\rF_\cB\colon \cZ_\cB(\cC)\to \cC.$$ In this section, we consider a general situation in which the forgetful functor $\rF_\cB$ has a right adjoint $\rR_\cB$. In this case, we label the corresponding adjunction natural isomorphisms as follows:
$$\alpha_W\colon W \to \rR_\cB \rF_\cB(W) \quad \text{and} \quad \beta_V\colon \rF_\cB \rR_\cB(V) \to V, \quad
W \in \cZ_\cB(\cC),~~ V \in \cC.$$

\begin{lemma} \label{lem:RB}
Assume  that $\rF_\cB \colon \cZ_\cB(\cC)\to \cC$ has a right adjoint $\rR_\cB$. Then the adjunction natural transformations $\alpha$ and $\beta$ are monoidal, and $\rR_\cB$ is lax monoidal.
\end{lemma}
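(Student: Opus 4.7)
The plan is to first observe that the forgetful functor $\rF_\cB \colon \cZ_\cB(\cC) \to \cC$ is strict monoidal: by Definition~\ref{def:ZBC}, the tensor product $(V,c_V) \otimes (W,c_W)$ in $\cZ_\cB(\cC)$ has underlying $\cC$-object $V \otimes W$, and the unit object is $(I,\sigma_{I,-})$ with underlying object $I_{\cC}$. This reduces the lemma to the standard categorical principle (Kelly's doctrinal adjunction) that the right adjoint of a strong monoidal functor carries a canonical lax monoidal structure, with respect to which the adjunction unit and counit become monoidal natural transformations.

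To make this explicit, I would construct the laxator
\[
\rR_{\cB}(V) \otimes \rR_{\cB}(W) \longrightarrow \rR_{\cB}(V \otimes W)
\]
as the image under the adjunction bijection $\Hom_{\cZ_\cB(\cC)}(-,\rR_\cB(V \otimes W)) \cong \Hom_{\cC}(\rF_\cB(-), V \otimes W)$ of the composite
\[
\rF_{\cB}\bigl(\rR_\cB(V)\otimes \rR_\cB(W)\bigr) = \rF_\cB\rR_\cB(V) \otimes \rF_\cB\rR_\cB(W) \xrightarrow{\beta_V \otimes \beta_W} V \otimes W,
\]
and the unit constraint $I_{\cZ_\cB(\cC)} \to \rR_\cB(I_{\cC})$ as $\alpha_{I_\cC}$. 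Crucially, since the codomain of the first arrow lies in $\cZ_\cB(\cC)$ and the adjunction bijection is with $\Hom$-sets in $\cZ_\cB(\cC)$, the resulting morphism automatically respects half-braidings, so no separate check in $\cZ_\cB(\cC)$ is needed.

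Next I would verify the associativity and unitality axioms of a lax monoidal functor. After transposing through the adjunction, each axiom reduces to an identity between morphisms into $V \otimes W \otimes Z$ (respectively $V$) in $\cC$; these identities follow from naturality of $\beta$, the triangle identity $\beta_{\rF_\cB(W)}\,\rF_\cB(\alpha_W) = \ide$, and the strict monoidality of $\rF_\cB$. Monoidality of $\alpha$ then amounts to the assertion that the composite $\alpha_W \otimes \alpha_{W'}$ followed by the laxator equals $\alpha_{W \otimes W'}$; transposing under the adjunction, this becomes tautological via the triangle identity. Monoidality of $\beta$ is built into the construction: its defining condition is, after transpose, literally the definition of the laxator.

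The hard part is essentially nothing conceptual — the statement is the well-known passage from strong to lax monoidality along an adjunction. The only point requiring any care is the verification that each constructed morphism lies in $\cZ_\cB(\cC)$, i.e.\ commutes with the half-braidings; but this is handled automatically by working with $\Hom$-sets in $\cZ_\cB(\cC)$ throughout, so the proof is ultimately a bookkeeping exercise with the triangle identities.
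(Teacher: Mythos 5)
Your proposal is correct and takes essentially the same approach as the paper: the paper's proof simply invokes the general doctrinal-adjunction fact that a right adjoint of a strong (hence oplax) monoidal functor is canonically lax monoidal with monoidal unit and counit, citing Kelly and Davydov's direct proof, and your explicit construction of the laxator as the adjoint transpose of $\beta_V \otimes \beta_W$ is precisely that direct proof (it matches the formula $\tau_{V,W}=\rR_\cB(\beta_V\otimes \beta_W)\,\rR_\cB((\rF_\cB)_{V,W})\,\alpha_{\rR_\cB(V)\otimes \rR_\cB(W)}$ used later in Theorem~\ref{prop:RB-YD}).
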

\begin{proof}
This follows from a general fact in category theory. The functor $\rF_\cB$ is strong monoidal, so it is oplax monoidal, and hence its right adjoint $\rR_{\cB}$ is lax monoidal \cite{Kel} (see also \cite{nlab}). A direct proof of these results is also given in \cite{Dav1}*{Section~5}.
\end{proof}

Now assume that $H$ is Hopf algebra in $\cB$, that $\cC=\lmod{H}(\cB)$, and recall from Proposition~\ref{prop:YD} that $\cZ_{\cB}(\cC)\simeq \lYD{H}(\cB)$. We construct a right adjoint $\rR_\cB$ to $\rF_\cB$ in the next result; this generalizes the construction from \cite{CMZ}*{Corollary 2.8}  when $\cB=\Vect$ (which is used crucially  in~\cite{Dav2}).

\begin{theorem} \label{prop:RB-YD}
The forgetful functor $\rF_\cB \colon \lYD{H}(\cB)\to \lmod{H}(\cB)$ has a lax monoidal right adjoint $\rR_{\cB}\colon \lmod{H}(\cB)\to \lYD{H}(\cB)$. Moreover, the functor $\rR_\cB$ sends an object $(V,a_V)$ to the object $(H\otimes V, a^{\rR},\delta^{\rR})$ with $H$-action $a^{\rR}$ and $H$-coaction $\delta^{\rR}$ given by
\begin{align*}
a^{\rR}&=(m\otimes\ide_V)(\ide_H \otimes \Psi_{V,H})(\ide_H \otimes a_V \otimes S)(\ide_{H\otimes H} \otimes \Psi_{H,V})\\
&\quad \circ(m\otimes \Delta\otimes \ide_V)(\ide_H \otimes \Psi_{H,H}\otimes \ide_V)(\Delta\otimes \ide_{H\otimes V}),\\
\delta^{\rR}&=\Delta\otimes \ide_V,
\end{align*}
both pictured in \eqref{eq:aRdR} below. For $f\colon V\to W$ in $\lmod{H}(\cB)$, we have that $\rR_\cB(f)=\ide_H\otimes f$. 

The lax monoidal structure is given by the morphism $u \colon I \overset{\alpha_I}{\longrightarrow} \rR_{\cB}\rF_\cB(I)\overset{\sim}{\rightarrow} \rR_{\cB}(I)$ and the natural transformation $\tau_{V,W}\colon \rR_\cB(V)\otimes \rR_\cB(W)\to \rR_\cB(V\otimes W)$ defined by
\begin{align*}
\tau_{V,W}&=(m_H \otimes \ide_{V\otimes W})(\ide_{H} \otimes \Psi_{V,H}\otimes \ide_W).
\end{align*}
\end{theorem}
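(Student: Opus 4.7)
The plan is to proceed in three steps: (i) verify that $\rR_\cB(V) = (H \otimes V, a^{\rR}, \delta^{\rR})$ is a well-defined object of $\lYD{H}(\cB)$ and that $\rR_\cB$ extends to a functor; (ii) construct the adjunction $\rF_\cB \dashv \rR_\cB$ via an explicit unit and counit and verify the triangle identities; and (iii) derive the stated lax monoidal structure.

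For step (i), the coaction $\delta^{\rR} = \Delta \otimes \text{Id}_V$ is coassociative and counital immediately from the Hopf algebra axioms for $\Delta$; associativity and unitality of $a^{\rR}$ are straightforward graphical calculus verifications using the bialgebra and antipode axioms of $H$ together with naturality of $\Psi$. The crucial verification is the Yetter--Drinfeld compatibility~\eqref{eq:HYDB}: the formula for $a^{\rR}$ is designed precisely as the braided analogue of the classical adjoint twist $h \cdot (g \otimes v) = h_{(1)} g S(h_{(3)}) \otimes h_{(2)} \cdot v$ (valid when $\cB = \Vect$), so that the YD axiom closes via the antipode axiom of $H$. Functoriality with $\rR_\cB(f) = \text{Id}_H \otimes f$ is then immediate: colinearity is trivial, and $H$-linearity follows from the $H$-linearity of $f$ applied in the middle strand of $a^{\rR}$.

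For step (ii), set $\alpha_W := \delta_W \colon W \to \rR_\cB \rF_\cB(W) = H \otimes W$ for $W \in \lYD{H}(\cB)$ and $\beta_V := \varepsilon_H \otimes \text{Id}_V \colon \rF_\cB \rR_\cB(V) = H \otimes V \to V$ for $V \in \lmod{H}(\cB)$. That $\alpha_W$ is a morphism in $\lYD{H}(\cB)$ follows from coassociativity of $\delta_W$ (colinearity) and from a reformulation of the YD condition on $W$ using the antipode axiom (linearity); the counit $\beta_V$ is $H$-linear by a short computation collapsing $a^{\rR}$ via the counit and antipode axioms of $H$. The two triangle identities then reduce respectively to the counit axiom of the coaction on $W$ and the counit axiom of the coproduct of $H$.

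For step (iii), Lemma~\ref{lem:RB} immediately yields that $\rR_\cB$ is lax monoidal, with structure maps determined by the adjunction. To match the stated formulas: $\alpha_I = \delta_I = u_H$ gives the claimed $u$, and $\tau_{V,W}$ is uniquely determined, using strict monoidality of $\rF_\cB$, as the morphism in $\lYD{H}(\cB)$ satisfying $\beta_{V \otimes W} \circ \rF_\cB(\tau_{V,W}) = \beta_V \otimes \beta_W$; the displayed expression satisfies this by the multiplicativity of $\varepsilon_H$, after separately checking that it is $H$-linear and $H$-colinear (another graphical calculation). The main obstacle throughout is careful handling of the braiding $\Psi$: unlike in $\cB = \Vect$ where strands cross freely, one must track the order of crossings and the naturality squares for $\Psi$ at every step, particularly in verifying the Yetter--Drinfeld axiom for $\rR_\cB(V)$ and the $H$-linearity of $\tau_{V,W}$.
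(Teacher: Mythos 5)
Your proposal is correct and takes essentially the same route as the paper's proof: the same graphical verification that $(H\otimes V, a^{\rR},\delta^{\rR})$ is an object of $\lYD{H}(\cB)$, the same unit $\alpha_W=\delta_W$ and counit $\beta_V=\varepsilon\otimes\ide_V$ with the triangle identities reducing to the two counit axioms, and the same derivation of the lax monoidal structure from the adjunction via Lemma~\ref{lem:RB}. Your characterization of $\tau_{V,W}$ as the unique Yetter--Drinfeld morphism whose adjoint mate is $\beta_V\otimes\beta_W$ is exactly the content of the paper's explicit formula $\tau_{V,W}=\rR_\cB(\beta_V\otimes \beta_W)\,\rR_\cB((\rF_\cB)_{V,W})\,\alpha_{\rR_\cB(V)\otimes \rR_\cB(W)}$, so the two treatments coincide.
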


\begin{proof}
Checking that $\rR_\cB(V)$ is an object in the category of $H$-Yetter--Drinfeld modules in $\cB$ is carried out using graphical calculus, especially since Sweedler notation cannot be employed easily for objects of $\cB$;  this argument is quite similar to \cite{Dav2}*{Proposition 5.1} for the case when $\cB$ is symmetric monoidal. Here,  the $H$-action and $H$-coaction on $\rR_{\cB}(V)$ is displayed as follows:  
\smallskip
 
\begin{align}  \label{eq:aRdR}
a^{\rR}&=\vcenter{\hbox{
\begingroup%
  \makeatletter%
  \providecommand\color[2][]{%
    \errmessage{(Inkscape) Color is used for the text in Inkscape, but the package 'color.sty' is not loaded}%
    \renewcommand\color[2][]{}%
  }%
  \providecommand\transparent[1]{%
    \errmessage{(Inkscape) Transparency is used (non-zero) for the text in Inkscape, but the package 'transparent.sty' is not loaded}%
    \renewcommand\transparent[1]{}%
  }%
  \providecommand\rotatebox[2]{#2}%
  \newcommand*\fsize{\dimexpr\f@size pt\relax}%
  \newcommand*\lineheight[1]{\fontsize{\fsize}{#1\fsize}\selectfont}%
  \ifx\svgwidth\undefined%
    \setlength{\unitlength}{37.84330664bp}%
    \ifx\svgscale\undefined%
      \relax%
    \else%
      \setlength{\unitlength}{\unitlength * \real{\svgscale}}%
    \fi%
  \else%
    \setlength{\unitlength}{\svgwidth}%
  \fi%
  \global\let\svgwidth\undefined%
  \global\let\svgscale\undefined%
  \makeatother%
  \begin{picture}(1,1.97754772)%
    \lineheight{1}%
    \setlength\tabcolsep{0pt}%
    \put(0,0){\includegraphics[width=\unitlength,page=1]{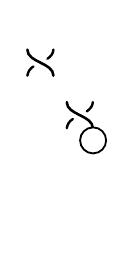}}%
    \put(0.59811281,0.8362876){\color[rgb]{0,0,0}\makebox(0,0)[lt]{\lineheight{0}\smash{\begin{tabular}[t]{l}$+$\end{tabular}}}}%
    \put(0,0){\includegraphics[width=\unitlength,page=2]{RBaction.pdf}}%
    \put(0.01016711,1.89724124){\color[rgb]{0,0,0}\makebox(0,0)[lt]{\lineheight{1.25}\smash{\begin{tabular}[t]{l}$H$\end{tabular}}}}%
    \put(0.30744557,1.89724124){\color[rgb]{0,0,0}\makebox(0,0)[lt]{\lineheight{1.25}\smash{\begin{tabular}[t]{l}$H$\end{tabular}}}}%
    \put(0.60472404,1.89724124){\color[rgb]{0,0,0}\makebox(0,0)[lt]{\lineheight{1.25}\smash{\begin{tabular}[t]{l}$V$\end{tabular}}}}%
    \put(0.60472404,0.01553486){\color[rgb]{0,0,0}\makebox(0,0)[lt]{\lineheight{1.25}\smash{\begin{tabular}[t]{l}$V$\end{tabular}}}}%
    \put(0.20835275,0.01553486){\color[rgb]{0,0,0}\makebox(0,0)[lt]{\lineheight{1.25}\smash{\begin{tabular}[t]{l}$H$\end{tabular}}}}%
  \end{picture}%
\endgroup%
}}, & \delta^{\rR}&=\vcenter{\hbox{
\begingroup%
  \makeatletter%
  \providecommand\color[2][]{%
    \errmessage{(Inkscape) Color is used for the text in Inkscape, but the package 'color.sty' is not loaded}%
    \renewcommand\color[2][]{}%
  }%
  \providecommand\transparent[1]{%
    \errmessage{(Inkscape) Transparency is used (non-zero) for the text in Inkscape, but the package 'transparent.sty' is not loaded}%
    \renewcommand\transparent[1]{}%
  }%
  \providecommand\rotatebox[2]{#2}%
  \newcommand*\fsize{\dimexpr\f@size pt\relax}%
  \newcommand*\lineheight[1]{\fontsize{\fsize}{#1\fsize}\selectfont}%
  \ifx\svgwidth\undefined%
    \setlength{\unitlength}{37.16210967bp}%
    \ifx\svgscale\undefined%
      \relax%
    \else%
      \setlength{\unitlength}{\unitlength * \real{\svgscale}}%
    \fi%
  \else%
    \setlength{\unitlength}{\svgwidth}%
  \fi%
  \global\let\svgwidth\undefined%
  \global\let\svgscale\undefined%
  \makeatother%
  \begin{picture}(1,0.80396279)%
    \lineheight{1}%
    \setlength\tabcolsep{0pt}%
    \put(0,0){\includegraphics[width=\unitlength,page=1]{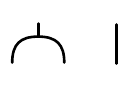}}%
    \put(0.19288379,0.72218426){\color[rgb]{0,0,0}\makebox(0,0)[lt]{\lineheight{1.25}\smash{\begin{tabular}[t]{l}$H$\end{tabular}}}}%
    \put(0.7983392,0.72218426){\color[rgb]{0,0,0}\makebox(0,0)[lt]{\lineheight{1.25}\smash{\begin{tabular}[t]{l}$V$\end{tabular}}}}%
    \put(-0.00893467,0.01581963){\color[rgb]{0,0,0}\makebox(0,0)[lt]{\lineheight{1.25}\smash{\begin{tabular}[t]{l}$H$\end{tabular}}}}%
    \put(0.39470226,0.01581963){\color[rgb]{0,0,0}\makebox(0,0)[lt]{\lineheight{1.25}\smash{\begin{tabular}[t]{l}$H$\end{tabular}}}}%
    \put(0.7983392,0.01689622){\color[rgb]{0,0,0}\makebox(0,0)[lt]{\lineheight{1.25}\smash{\begin{tabular}[t]{l}$V$\end{tabular}}}}%
  \end{picture}%
\endgroup%
}}~.
\end{align}
\smallskip

We leave verification of the action condition $a^{\rR}(\ide_H \otimes a^{\rR}) = a^{\rR}(m \otimes \ide_{H \otimes V})$ to the reader. The Yetter--Drinfeld condition, Equation~\ref{eq:HYDB}, is verified by the following graphical calculation. 

\begin{align*}
\vcenter{\hbox{\import{Graphics/}{RBYDcond.pdf_tex}}}
\end{align*}

\medskip

\noindent 
Here, the first and last equalities follow from  \eqref{eq:aRdR}. The second and third equality use coassociativity, the bialgebra axiom, and naturality of the braiding. The fourth equality follows from a computation using the antipode axioms and bialgebra condition while the fifth equality again uses naturality of the braiding and coassociativity. 

Functoriality of $\rR_{\cB}$ is clear by definition.
To see that $\rR_{\cB}$ is right adjoint to $\rF_\cB$, we present the unit $\alpha$ and counit $\beta$ of the adjunction. For objects $V$ of $\lmod{H}(\cB)$ and $W$ of $\lYD{H}(\cB)$, define
\begin{equation} \label{eq:alphabeta}
\alpha_W:=\delta_W \colon W\rightarrow \rR_\cB\rF_\cB(W) \quad \quad \text{and} \quad \quad
\beta_V:= \varepsilon\otimes \ide_V\colon \rF_\cB\rR_{\cB}(V)\rightarrow V.
\end{equation}
A direct check verifies the adjunction axioms for $\alpha,\beta$, and further one can check directly that $\alpha_V$ is a morphism in  $\lYD{H}(\cB)$ and $\beta_V$ a morphism in $\lmod{H}(\cB)$.

The lax monoidal structure is computed as in \cite{Dav1}*{Section 5} as
$$\tau_{V,W}=\rR_\cB(\beta_V\otimes \beta_W)\; \rR_\cB((\rF_\cB)_{V,W}) \;(\alpha_{\rR_\cB(V)\otimes \rR_\cB(W)}).$$
Using \eqref{eq:alphabeta} and omitting associativity, we have that $\tau_{V,W}\colon \rR_\cB(V) \otimes \rR_\cB(W) \to \rR_\cB(V \otimes W)$ is
\[
\begin{array}{l}
\tau_{V,W} = (\ide_H \otimes \;\varepsilon \otimes \ide_V \otimes \;\varepsilon \otimes \ide_W)\delta_{H \otimes V \otimes H \otimes W} \\
 ~~= (\ide_H \otimes\; \varepsilon \otimes \ide_V \otimes\; \varepsilon \otimes \ide_W)(m \otimes \ide_{H\otimes V \otimes H\otimes W})(\ide_H \otimes \Psi_{H \otimes V, H} \otimes \ide_{H \otimes W})(\Delta \otimes \ide_V \otimes \Delta \otimes \ide_W)\\
~~=(m \otimes \ide_{V \otimes W})(\ide_H \otimes \Psi_{V,H} \otimes \ide_W) .
\end{array}
\]
We have to verify associativity and unitarity squares for this lax monoidal structure, and these follow directly from the corresponding properties of $H$. It is also directly verified that $u$ and $\tau_{V,W}$ are indeed morphisms in $\lYD{H}(\cB)$. \end{proof}

For any lax monoidal functor $G\colon \cC\to \cD$ with $A$ an algebra object in $\cC$, we get that $G(A)$ is an algebra in $\cD$ with product $m_{G(A)}=G(m_A) G_{A,A}.$ Using this, we observe the following:

\begin{corollary} \label{cor:RBA}
The lax monoidal  functor $\rR_\cB$ induces the following functor 
\begin{align*}
\rR_\cB\colon \Alg(\lmod{H}(\cB))\rightarrow \Alg(\lYD{H}(\cB))
\end{align*}
on categories of algebra objects. Given an algebra $A$ with product $m_A$ in $\lmod{H}(\cB)$, the algebra $\rR_\cB(A)$ has product given by
\begin{align*}
m=(m_H\otimes m_A)(\ide_H\otimes \Psi_{A,H}\otimes\ide_A)=(\ide_H\otimes m_A)\tau_{A,A},
\end{align*}
that is, given by the tensor product algebra structure on $H\otimes A$ in $\cB$. \qed
\end{corollary}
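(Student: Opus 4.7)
The plan is to combine the general categorical fact stated immediately before the corollary with the explicit formulas for $\rR_\cB$ and its lax monoidal structure $\tau$ established in Theorem~\ref{prop:RB-YD}. Since $\rR_\cB$ was already shown to be lax monoidal in that theorem, the induced functor on algebra objects follows automatically from the cited general principle ($G(A)$ is an algebra with product $G(m_A)\,G_{A,A}$ for any lax monoidal $G$ and algebra $A$), so no additional work is needed for the first assertion.

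For the explicit description of the product on $\rR_\cB(A)$, I would apply the recipe $m_{\rR_\cB(A)} = \rR_\cB(m_A)\,\tau_{A,A}$. From the action of $\rR_\cB$ on morphisms stated in Theorem~\ref{prop:RB-YD} (namely $\rR_\cB(f) = \ide_H \otimes f$), we have $\rR_\cB(m_A) = \ide_H \otimes m_A$. Substituting the formula $\tau_{A,A} = (m_H \otimes \ide_{A\otimes A})(\ide_H \otimes \Psi_{A,H} \otimes \ide_A)$ from the same theorem and composing yields
\[ m_{\rR_\cB(A)} = (\ide_H \otimes m_A)(m_H \otimes \ide_{A \otimes A})(\ide_H \otimes \Psi_{A,H} \otimes \ide_A) = (m_H \otimes m_A)(\ide_H \otimes \Psi_{A,H} \otimes \ide_A), \]
where the last equality is just the interchange law in the monoidal category $\cB$ applied to the disjoint morphisms $m_H$ and $m_A$.

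The remaining observation is that this is exactly the braided tensor product algebra structure on $H \otimes A$ in $\cB$ recalled at the end of Section~\ref{sec:notation}: for algebras $A,B$ in a braided category, the product on $A \otimes B$ is $(m_A \otimes m_B)(\ide_A \otimes \Psi_{A,B} \otimes \ide_B)$. There is no genuine obstacle to this corollary; the only point requiring care is to keep track of which braiding appears, namely the braiding $\Psi = \Psi_\cB$ of the base category $\cB$ (rather than the Yetter--Drinfeld braiding $\Psi^{\bf YD}$ of $\lYD{H}(\cB)$), which is precisely what the formula for $\tau$ in Theorem~\ref{prop:RB-YD} provides.
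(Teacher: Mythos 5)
Your proposal is correct and follows exactly the route the paper intends: the corollary is stated with an immediate \verb|\qed| because it is a direct consequence of the general fact quoted just above it (a lax monoidal functor $G$ sends an algebra $A$ to an algebra with product $G(m_A)\,G_{A,A}$), combined with the formulas $\rR_\cB(f)=\ide_H\otimes f$ and $\tau_{V,W}$ from Theorem~\ref{prop:RB-YD}. Your explicit computation $(\ide_H\otimes m_A)(m_H\otimes\ide_{A\otimes A})(\ide_H\otimes\Psi_{A,H}\otimes\ide_A)=(m_H\otimes m_A)(\ide_H\otimes\Psi_{A,H}\otimes\ide_A)$ via the interchange law, and your remark that the braiding occurring here is $\Psi_\cB$ rather than $\Psi^{\bf YD}$, simply spell out the details the paper leaves implicit.
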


\begin{example}\label{adjoint-expl}
Consider  the $\otimes$-unit $I$ of $\cB$. Then $I$ is an $H$-module in $\cB$ with trivial action, $a_I =\varepsilon\otimes \ide_I$, and moreover, $I \in \Alg(\lmod{H}(\cB))$. The Yetter--Drinfeld module $\rR_\cB(I)=H\otimes I\cong H$ is given by the adjoint $H$-action
\begin{align*}
a_{\op{ad}}
= m(m\otimes S)(\ide_H\otimes \Psi_{H,H})(\Delta\otimes \ide_H),
\end{align*} 
together with the regular coaction $\Delta$. It follows that $H$, with this Yetter--Drinfeld structure, is an algebra object in $\lYD{H}(\cB)$. 
\end{example}

\begin{example}\label{coreg-expl}
Suppose that we have a Hopf algebra $\rdual{H}$ in $\cB$ right dually paired to a Hopf algebra $H$ in $\cB$, that is, there is a pairing $\ev\colon H\otimes \rdual{H} \to I$ in $\cB$ satisfying the conditions of \cite{L17}*{Definition~3.1}. Extending the graphical calculus by denoting $\ev=\vcenter{\hbox{
\begingroup%
  \makeatletter%
  \providecommand\color[2][]{%
    \errmessage{(Inkscape) Color is used for the text in Inkscape, but the package 'color.sty' is not loaded}%
    \renewcommand\color[2][]{}%
  }%
  \providecommand\transparent[1]{%
    \errmessage{(Inkscape) Transparency is used (non-zero) for the text in Inkscape, but the package 'transparent.sty' is not loaded}%
    \renewcommand\transparent[1]{}%
  }%
  \providecommand\rotatebox[2]{#2}%
  \ifx\svgwidth\undefined%
    \setlength{\unitlength}{16.80101968bp}%
    \ifx\svgscale\undefined%
      \relax%
    \else%
      \setlength{\unitlength}{\unitlength * \real{\svgscale}}%
    \fi%
  \else%
    \setlength{\unitlength}{\svgwidth}%
  \fi%
  \global\let\svgwidth\undefined%
  \global\let\svgscale\undefined%
  \makeatother%
  \begin{picture}(1,0.52545969)%
    \put(0,0){\includegraphics[width=\unitlength]{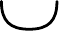}}%
  \end{picture}%
\endgroup%
}}$, 
we define the \emph{left coregular action} $a^{\mathrm{cor}}$ of $H$ on $\rdual{H}$ by
\begin{align*}
a^{\mathrm{cor}}=(\ev\otimes \ide_{\rdual{H}})(\ide_H\otimes \Delta_{\rdual{H}})=\vcenter{\hbox{
\begingroup%
  \makeatletter%
  \providecommand\color[2][]{%
    \errmessage{(Inkscape) Color is used for the text in Inkscape, but the package 'color.sty' is not loaded}%
    \renewcommand\color[2][]{}%
  }%
  \providecommand\transparent[1]{%
    \errmessage{(Inkscape) Transparency is used (non-zero) for the text in Inkscape, but the package 'transparent.sty' is not loaded}%
    \renewcommand\transparent[1]{}%
  }%
  \providecommand\rotatebox[2]{#2}%
  \newcommand*\fsize{\dimexpr\f@size pt\relax}%
  \newcommand*\lineheight[1]{\fontsize{\fsize}{#1\fsize}\selectfont}%
  \ifx\svgwidth\undefined%
    \setlength{\unitlength}{72.24902333bp}%
    \ifx\svgscale\undefined%
      \relax%
    \else%
      \setlength{\unitlength}{\unitlength * \real{\svgscale}}%
    \fi%
  \else%
    \setlength{\unitlength}{\svgwidth}%
  \fi%
  \global\let\svgwidth\undefined%
  \global\let\svgscale\undefined%
  \makeatother%
  \begin{picture}(1,0.54296257)%
    \lineheight{1}%
    \setlength\tabcolsep{0pt}%
    \put(0,0){\includegraphics[width=\unitlength,page=1]{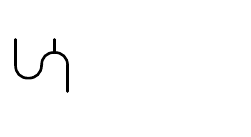}}%
    \put(-0.01148913,0.43780328){\color[rgb]{0,0,0}\makebox(0,0)[lt]{\lineheight{1.25}\smash{\begin{tabular}[t]{l}$H$\end{tabular}}}}%
    \put(0.14422232,0.43780328){\color[rgb]{0,0,0}\makebox(0,0)[lt]{\lineheight{1.25}\smash{\begin{tabular}[t]{l}$\rdual{H}$\end{tabular}}}}%
    \put(0.15460309,0.02257275){\color[rgb]{0,0,0}\makebox(0,0)[lt]{\lineheight{1.25}\smash{\begin{tabular}[t]{l}$\rdual{H}$\end{tabular}}}}%
    \put(0.37259911,0.1782842){\color[rgb]{0,0,0}\makebox(0,0)[lt]{\lineheight{1.25}\smash{\begin{tabular}[t]{l}$.$\end{tabular}}}}%
  \end{picture}%
\endgroup%
}}
\end{align*}
The following computation shows that $\rdual{H}^{\Psi^{-1}}$ is an algebra object in $\lmod{H}(\cB)$.

\begin{align*}
\vcenter{\hbox{\import{Graphics/}{coregularproof.pdf_tex}}}
\end{align*}

\noindent Hence, $\rR_\cB(\rdual{H}^{\Psi^{-1}})=H\otimes\rdual{H}^{\Psi^{-1}}$ 
is an algebra object in $\lYD{H}(\cB)$.

\end{example}

\subsection{The \texorpdfstring{$\cB$}{B}-center} \label{sec:Bcenter}

This section contains the categorical definition of the $\cB$-center, which is a direct generalization of the \emph{full center} of  Davydov's works \cites{Dav1,Dav2} relative to a braided monoidal category $\cB$. Davydov's case corresponds to specializing $\cB=\Vect$.

\begin{definition} \label{def:Bcenter}
Let $A$ be an algebra in a $\cB$-central monoidal category $\cC$. The \emph{$\cB$-center} of $A$ is a pair ($Z_\cB(A)$, $\zeta_A$), where $Z_\cB(A)$ is an object in $\cZ_\cB(\cC)$ with half-braiding $c_{Z_\cB(A),A}$ and $\zeta_A := \zeta^\cB_A \colon Z_\cB(A)\to A$ is a morphism in $\cC$, which is terminal among pairs $((Z, c_{Z,A}),\zeta\colon Z\to A)$ in the comma category $\rF_\cB \hspace{-.05in} \downarrow \hspace{-.05in} A$ so that the following diagram commutes.
\begin{align}\label{Bcenter-diag}
\xymatrix@R=1.5pc@C=3pc{
Z\otimes A\ar[rr]^-{c_{Z,A}}\ar[d]^{\zeta\otimes \ide}&&A\otimes Z\ar[d]^{\ide\otimes \zeta}\\
A\otimes A\ar[rd]_m&& A\otimes A\ar[ld]^m\\
&A&
}
\end{align}
When $\cB = \Vect$, the pair $(Z(A):=Z_{\Vect}(A),~\zeta_A^{\Vect})$ is  the {\it full center} of $A$ (as in \cites{Dav1,Dav2}).
\end{definition}

The $\cB$-center of $A$ is realized as a terminal object of the following braided monoidal category.

\begin{definition} \label{def:ZBA}
Let $A$ be an algebra in $\cC$. We denote by $\cal{Z}_\cB(A)$ the category consisting of
 \begin{itemize}
 \item pairs $(Z,\zeta)$ with $Z=(Z,c)$ an object in $\cZ_\cB(\cC)$, and $\zeta\colon Z\to A$ a morphism in $\cC$, that make Diagram~\ref{Bcenter-diag} commute; and
\item morphisms $(Z,\zeta)\to (Z',\zeta')$ in $\cZ_\cB(\cC)$  that are morphisms $f\colon Z \to Z'$ such that the diagram below commutes.
\begin{align*}
\xymatrix{Z\ar^f[rr] \ar_{\zeta}[dr]&&Z'\ar^{\zeta'}[dl]\\
&A&
}
\end{align*}
\end{itemize}
\end{definition}

Given objects $(Z,\zeta)$, $(Z',\zeta') \in \cZ_\cB(A)$, their tensor product is $(Z\otimes Z',m(\zeta\otimes \zeta'))$, using the tensor product $Z\otimes Z'$ in $\cZ_\cB(\cC)$, cf. \cite{Dav1}*{Remark~4.2}. This makes $\cZ_\cB(A)$ a monoidal category.

\smallskip

The category $\cZ_\cB(A)$ is braided via 
$$\Psi_{(Z,\zeta),(Z',\zeta')}:=c_{Z,Z'}\colon Z\otimes Z'\rightarrow Z'\otimes Z,$$
which is a morphism in $\cZ_\cB(A)$ by commutativity of the outer diagram in

\begin{align*}
\xymatrix{
Z\otimes Z'\ar[rrrr]^{c_{Z,Z'}}\ar[rd]^{\ide\otimes \zeta'}\ar[dd]_{\zeta\otimes \zeta'} &&&& Z'\otimes Z\ar[ld]_{\zeta'\otimes \ide}\ar[dd]^{\zeta'\otimes \zeta}\\
&Z\otimes A\ar[ld]^{\zeta\otimes \ide}\ar[rr]^{c_{Z,A}}&&A\otimes Z\ar[rd]_{\ide\otimes \zeta}&\\
A\otimes A\ar[rr]_m &&A&& \ar[ll]^m A\otimes A.}
\end{align*}

\smallskip

\noindent The upper middle diagram commutes by naturality of $c$. We have the following result.

\begin{proposition} \label{prop:ZBA}
The $\cB$-center $Z_\cB(A)$ of $A\in \Alg(\cC)$ is a braided commutative algebra  in $\cZ_\cB(\cC)$ and  $\zeta_A\colon Z_\cB(A) \to A$ is a morphism of algebras in $\cC$. 
 \end{proposition}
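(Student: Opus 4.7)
The plan is to apply Lemma~\ref{lem:terminal} and Lemma~\ref{lem:terminal2} to the braided monoidal category $\cal{Z}_\cB(A)$ from Definition~\ref{def:ZBA}, and then transport the resulting algebra structure along the strict forgetful functor $\rU \colon \cal{Z}_\cB(A)\to \cZ_\cB(\cC)$, $(Z,\zeta)\mapsto Z$. The preceding paragraph already describes the monoidal and braided structure on $\cal{Z}_\cB(A)$; so the first step I would take is to verify that the proposed tensor product on $\cal{Z}_\cB(A)$ actually lands in $\cal{Z}_\cB(A)$, i.e.\ that $(Z \otimes Z',\, m_A(\zeta \otimes \zeta'))$ satisfies Diagram~\eqref{Bcenter-diag}. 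Using the tensor-product half-braiding $c_{Z\otimes Z',A}=(c_{Z,A}\otimes \ide_{Z'})(\ide_Z\otimes c_{Z',A})$ coming from condition (i) of Definition~\ref{def:ZBC}, this is a hexagon-style diagram chase that combines the hexagon for $(Z,\zeta)$, the hexagon for $(Z',\zeta')$, naturality of $c_{-,A}$, and the associativity of $m_A$. The tensor unit of $\cal{Z}_\cB(A)$ is $(I, u_A)$; with these choices the unit, associativity, and pentagon axioms are inherited from $\cZ_\cB(\cC)$. The braiding $c_{Z,Z'}$ lies in $\cal{Z}_\cB(A)$ by the commutative hexagon displayed immediately before the proposition, and its hexagon axioms are inherited from those of $\cZ_\cB(\cC)$ because $\rU$ is strict monoidal and reflects identities of morphisms.

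Next, by construction $(Z_\cB(A),\zeta_A)$ is a terminal object in $\cal{Z}_\cB(A)$, so Lemma~\ref{lem:terminal} equips it with a commutative algebra structure in $\cal{Z}_\cB(A)$, unique up to unique isomorphism. Since $\rU$ is strong braided monoidal, it sends this commutative algebra to a commutative algebra in $\cZ_\cB(\cC)$; this gives $Z_\cB(A) \in \ComAlg(\cZ_\cB(\cC))$, with multiplication $\mu\colon Z_\cB(A)\otimes Z_\cB(A)\to Z_\cB(A)$ and unit $u\colon I\to Z_\cB(A)$ realized as morphisms in $\cZ_\cB(\cC)$.

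Finally, to show that $\zeta_A$ is an algebra morphism in $\cC$, I would unfold what it means for $\mu$ and $u$ to be morphisms in $\cal{Z}_\cB(A)$. By Definition~\ref{def:ZBA}, a morphism $(Z_\cB(A)\otimes Z_\cB(A),\, m_A(\zeta_A\otimes \zeta_A))\to (Z_\cB(A),\zeta_A)$ is a morphism $f$ in $\cZ_\cB(\cC)$ with $\zeta_A\circ f = m_A\circ(\zeta_A\otimes \zeta_A)$; applied to $f=\mu$ this gives compatibility of $\zeta_A$ with the products. Likewise, a morphism $(I, u_A)\to (Z_\cB(A),\zeta_A)$ is a morphism $g$ in $\cZ_\cB(\cC)$ with $\zeta_A\circ g = u_A$; applied to $g=u$ this gives compatibility of $\zeta_A$ with the units. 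Thus $\zeta_A\in \Alg(\cC)$. This last step is really just Lemma~\ref{lem:terminal2} in the guise of $\cal{Z}_\cB(A)$.

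The one place that is not purely formal is the first step: the hexagonal verification that $\cal{Z}_\cB(A)$ is closed under the prescribed tensor product. Everything else then follows from terminal-object arguments and the fact that the strong braided monoidal functor $\rU$ preserves (commutative) algebra objects.
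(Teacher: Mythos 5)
Your proposal is correct and follows essentially the same route as the paper: the paper likewise establishes that $\cal{Z}_\cB(A)$ is a braided monoidal category (the tensor-product closure you verify is folded into the discussion preceding the proposition, with a citation to Davydov), applies Lemma~\ref{lem:terminal} to the terminal object $(Z_\cB(A),\zeta_A)$, transports the commutative algebra structure along the braided monoidal forgetful functor $\cal{Z}_\cB(A)\to\cZ_\cB(\cC)$, and reads off $\zeta_A m_{Z_\cB(A)}=m_A(\zeta_A\otimes\zeta_A)$ from the fact that the product is a morphism in $\cal{Z}_\cB(A)$. Your additional explicit check of unit compatibility and your framing of the last step as Lemma~\ref{lem:terminal2} are harmless elaborations of the same argument.
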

\begin{proof}
It follows from Lemma \ref{lem:terminal} that $(Z_\cB(A),\zeta_A)$, being the terminal object of the braided monoidal category $\cZ_\cB(A)$, is a commutative algebra in $\cZ_\cB(A)$. Since the forgetful functor $\cZ_\cB(A)\to \cZ_\cB(\cC)$ is a braided monoidal functor, we get that $Z_\cB(A)$ is a commutative algebra in $\cZ_\cB(\cC)$. 
Moreover, the product $m_{Z_\cB(A)}$ is a morphism in $\cZ_\cB(A)$. So,
$$\zeta_A m_{Z_\cB(A)}=m_A(\zeta_A\otimes \zeta_A),$$
and this condition means that $\zeta_A$ is a morphism of algebras in $\cC$.
\end{proof}

\begin{corollary}\label{morphismtoZ}
For any algebra $A$ in $\cC$, there is a unique morphism of algebras $\xi_A\colon Z_{\cB}(A)\to Z(A)$ in $\cZ(\cC)$, which commutes with the respective morphisms of algebras to $A$.
\end{corollary}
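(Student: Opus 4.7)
The plan is to exhibit $Z_\cB(A)$ as an object of the category $\cal{Z}_{\Vect}(A)$ (the comma-category setup of Definition~\ref{def:ZBA} specialized to $\cB=\Vect$) and then invoke the universal property of $Z(A)$ as its terminal object.

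First I would use Proposition~\ref{prop:rel-center}(3): the forgetful inclusion $\cZ_\cB(\cC)\hookrightarrow \cZ(\cC)$ is a (fully faithful) braided monoidal functor. Consequently, the pair $(Z_\cB(A),c_{Z_\cB(A),-})$ can be viewed as an object of $\cZ(\cC)$, and the morphism $\zeta_A^\cB\colon Z_\cB(A)\to A$ in $\cC$ together with the commutativity of the hexagon~\eqref{Bcenter-diag} immediately witnesses $(Z_\cB(A),\zeta_A^\cB)$ as an object of $\cal{Z}_{\Vect}(A)$, since the defining diagram for membership in $\cal{Z}_{\Vect}(A)$ is precisely~\eqref{Bcenter-diag}.

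Next I would apply the universal property: by Definition~\ref{def:Bcenter} (for $\cB=\Vect$), the full center $Z(A)$ is the terminal object of $\cal{Z}_{\Vect}(A)$. Hence there is a unique morphism
\[
\xi_A\colon (Z_\cB(A),\zeta_A^\cB)\longrightarrow (Z(A),\zeta_A^{\Vect})
\]
in $\cal{Z}_{\Vect}(A)$; concretely, $\xi_A\colon Z_\cB(A)\to Z(A)$ is a morphism in $\cZ(\cC)$ satisfying $\zeta_A^{\Vect}\,\xi_A=\zeta_A^\cB$, which is exactly the compatibility asserted in the statement.

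Finally, to upgrade $\xi_A$ to a morphism of algebras, I would invoke Lemma~\ref{lem:terminal2} applied to the braided monoidal category $\cal{Z}_{\Vect}(A)$: since $Z(A)$ is its terminal object, it is (by Lemma~\ref{lem:terminal} and Proposition~\ref{prop:ZBA} specialized to $\cB=\Vect$) a commutative algebra object there, and any morphism from an algebra object to the terminal algebra object is automatically an algebra morphism in $\cal{Z}_{\Vect}(A)$. Pushing this along the braided monoidal forgetful functor $\cal{Z}_{\Vect}(A)\to \cZ(\cC)$ yields that $\xi_A$ is a morphism of algebras in $\cZ(\cC)$, and uniqueness is preserved since the forgetful functor $\cal{Z}_{\Vect}(A)\to \cZ(\cC)$ is faithful. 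There is no substantive obstacle here — the whole argument is a packaging of already-established universal properties; the only point to be careful about is verifying that the half-braiding of $Z_\cB(A)$, restricted from a natural transformation satisfying~\eqref{Bcenter-diag}, genuinely gives an object of $\cal{Z}_{\Vect}(A)$, which is immediate from Proposition~\ref{prop:rel-center}(3).
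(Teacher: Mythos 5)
Your proposal is correct and follows essentially the same route as the paper: view $(Z_\cB(A),\zeta_A^\cB)$ as an object of $\cal{Z}_{\Vect}(A)$ via Proposition~\ref{prop:rel-center}(3), invoke terminality of $Z(A)$ there, and upgrade to an algebra morphism with Lemma~\ref{lem:terminal2}. The only slip is in your last step: the hypothesis of Lemma~\ref{lem:terminal2} must be placed on the \emph{source} $Z_\cB(A)$, i.e., you need $Z_\cB(A)$ to be an algebra object in $\cal{Z}_{\Vect}(A)$ (which holds because $\cZ_\cB(A)$ is a monoidal subcategory of $\cZ(A)$ and $Z_\cB(A)$ is its terminal object, equivalently because $\zeta_A^\cB$ is an algebra map by Proposition~\ref{prop:ZBA}), not merely that the target $Z(A)$ is one.
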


\begin{proof}
Recall from Proposition \ref{prop:rel-center}(3) that $\cZ_{\cB}(\cC)$ is a braided monoidal subcategory of $\cZ(\cC)$. This implies that $\cZ_\cB(A)$ is a braided monoidal subcategory of $\cZ(A)$. Hence, $Z_\cB(A)$ is an algebra in $\cZ(A)\:=\cZ_{\Vect}(A)$.
By Lemma \ref{lem:terminal2}, we see that the unique morphism $\xi_A\colon Z_{\cB}(A)\to Z(A)$ is one of algebras in $\cZ(A)$. In particular, $\xi_A$ is a morphism of algebras in $\cZ(\cC)$, which commutes with the respective morphisms to $A$ in the sense that the following diagram commutes.
\[
\xymatrix{
Z_{\cB}(A)\ar[rr]^{\xi_A}\ar[dr]_{\zeta_A^{\cB}}&& Z(A)\ar[dl]^{\zeta_A^{\Vect}}\\
&A&
}
\]

\vspace{-.25in}
\end{proof}

\subsection{The \texorpdfstring{$\cB$}{B}-center as a left center} \label{sec:Bctr-lfctr}
We will now show that the $\cB$-center of an algebra in a $\cB$-central monoidal category can be computed as the left center of its image under the functor $\rR_{\cB}$, thus generalizing \cite{Dav1}*{Theorem~5.4}.

\begin{theorem} \label{thm:Bcenter}
For a $\cB$-central monoidal category $\cC$, assume that there exists a right adjoint $\rR_\cB$ to the forgetful functor $\cZ_\cB(\cC)\to \cC$, and that the counit is given by epimorphisms. Let $A\in \Alg(\cC)$. Then, there is a canonical isomorphism of (commutative) algebras $C^l(\rR_\cB(A))\cong Z_\cB(A)$ in $\cZ_\cB(\cC)$.
\end{theorem}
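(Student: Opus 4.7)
The plan is to exhibit an isomorphism of (braided) monoidal categories $\cal{C}^l(\rR_\cB(A))\cong \cal{Z}_\cB(A)$; the theorem will then follow at once from uniqueness of terminal objects (Lemma~\ref{lem:terminal}), since $C^l(\rR_\cB(A))$ and $Z_\cB(A)$ are the respective terminal objects endowed with their canonical (commutative) algebra structures coming from Propositions~\ref{prop:leftcenter} and~\ref{prop:ZBA}.

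The comparison of the two categories is governed by the adjunction $\rF_\cB\dashv \rR_\cB$: for each $Z\in \cZ_\cB(\cC)$, the natural hom-bijection $\Hom_{\cZ_\cB(\cC)}(Z,\rR_\cB(A))\cong \Hom_{\cC}(\rF_\cB(Z),A)$ sends $\gamma\mapsto \zeta:=\beta_A\,\rF_\cB(\gamma)$. Naturality of this bijection in $Z$ immediately converts a triangle $\gamma'f=\gamma$ over $\rR_\cB(A)$ into a triangle $\zeta' f=\zeta$ over $A$, and vice versa, so the underlying categories are in bijection on both objects and morphisms. The tensor products also match, because $\beta_A$ is an algebra morphism by Lemma~\ref{lem:RB} (as $\beta$ is monoidal), whence
\[
\beta_A\, m_{\rR_\cB(A)}(\gamma\otimes \gamma')\;=\;m_A(\beta_A\otimes \beta_A)(\gamma\otimes \gamma')\;=\;m_A(\zeta\otimes \zeta').
\]

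The crux of the argument is matching the defining diagrams under this bijection, and this is the step I expect to be the main obstacle. Take the left-center square of Definition~\ref{def:leftcenter} for $\gamma$, with braiding $\Psi^{\cZ_\cB(\cC)}_{Z,\rR_\cB(A)}=c_{Z,\rR_\cB(A)}$ as in Definition~\ref{def:ZBC}. Composing both legs with $\beta_A$ and using $\beta_A\, m_{\rR_\cB(A)}=m_A(\beta_A\otimes \beta_A)$ together with naturality of the half-braiding at $\beta_A$, i.e.\ $(\beta_A\otimes \ide_Z)\,c_{Z,\rR_\cB(A)}=c_{Z,A}(\ide_Z\otimes \beta_A)$, the left-center condition rewrites as
\[
m_A(\ide_A\otimes \zeta)\,c_{Z,A}\,(\ide_Z\otimes \beta_A)\;=\;m_A(\zeta\otimes \ide_A)\,(\ide_Z\otimes \beta_A).
\]
Here the hypothesis that $\beta_A$ is an epimorphism enters decisively: right-exactness of $\otimes$ in our abelian setting (Section~\ref{sec:notation}) makes $\ide_Z\otimes \beta_A$ an epimorphism, which may then be cancelled from the right to recover precisely the $\cB$-center diagram~\eqref{Bcenter-diag} for $\zeta$. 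Since pre-composition with an epimorphism is injective on morphisms, this derivation is reversible and yields an \emph{equivalence} of conditions rather than a single implication. The bookkeeping in this last step is where the argument is delicate: one must correctly juggle the distinct roles of the half-braiding $c_{Z,-}$ on $\cC$ and the induced braiding on $\cZ_\cB(\cC)$, and deploy the epimorphism hypothesis in exactly the right place so that the diagram chase produces an if-and-only-if. Once this is in place, the terminal objects on the two sides correspond and are isomorphic as commutative algebras in $\cZ_\cB(\cC)$.
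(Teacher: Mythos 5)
Your overall architecture is the one the paper intends: the paper's own proof simply defers to Davydov's formal argument (\cite{Dav1}*{Theorem~5.4}), which is exactly what you spell out --- transport the defining conditions across the adjunction bijection $\Hom_{\cZ_\cB(\cC)}(Z,\rR_\cB(A))\cong\Hom_\cC(\rF_\cB(Z),A)$, match tensor products via monoidality of $\beta$ (Lemma~\ref{lem:RB}), and use the epimorphism hypothesis on the counit. However, the step you yourself flag as the crux has a misattributed justification that, as written, leaves a genuine gap. Write (LC) for the left-center condition of Definition~\ref{def:leftcenter} applied to $\gamma\colon Z\to\rR_\cB(A)$, (BC) for the $\cB$-center condition of Diagram~\ref{Bcenter-diag} applied to $\zeta=\beta_A\,\rF_\cB(\gamma)$, and (E) for your displayed equation, which is (BC) precomposed with $\ide_Z\otimes\beta_A$. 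The principle you invoke --- precomposition with an epimorphism is injective on hom-sets --- correctly gives (E)~$\Rightarrow$~(BC), and (BC)~$\Rightarrow$~(E) is trivial. But it does \emph{not} reverse your first move: passing from (LC) to (E) is a \emph{post}-composition with $\beta_A$ (after applying $\rF_\cB$), and post-composition with an epimorphism is not injective; that would require $\beta_A$ to be a monomorphism. So your chain establishes (LC)~$\Rightarrow$~(BC) but not (BC)~$\Rightarrow$~(LC), and without the latter the assignment from $\mathcal{Z}_\cB(A)$ (Definition~\ref{def:ZBA}) to $\mathcal{C}^l(\rR_\cB(A))$ is not known to be well defined, so the two terminal objects cannot be compared at all.

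The correct reason the first move reverses is the adjunction itself, which you set up but do not deploy here. Both sides of (LC), namely $m_{\rR_\cB(A)}(\gamma\otimes\ide)$ and $m_{\rR_\cB(A)}(\ide\otimes\gamma)\,c_{Z,\rR_\cB(A)}$, are morphisms \emph{in} $\cZ_\cB(\cC)$ from $Z\otimes\rR_\cB(A)$ to $\rR_\cB(A)$: the multiplication $m_{\rR_\cB(A)}$ is a $\cZ_\cB(\cC)$-morphism because $\rR_\cB$ is lax monoidal, $\gamma$ is one by hypothesis, and the braiding is one by construction. Your computation shows that (E) is precisely the statement that their adjuncts under the bijection
\begin{equation*}
\Hom_{\cZ_\cB(\cC)}\bigl(Z\otimes\rR_\cB(A),\,\rR_\cB(A)\bigr)\;\cong\;\Hom_\cC\bigl(\rF_\cB(Z)\otimes\rF_\cB\rR_\cB(A),\,A\bigr)
\end{equation*}
coincide; injectivity of this bijection then yields (LC). With that one-line repair the equivalence (LC)~$\Leftrightarrow$~(BC) holds, the epimorphism hypothesis being needed only for the cancellation (E)~$\Rightarrow$~(BC), and the rest of your argument (naturality in $Z$, monoidality, terminality) goes through as in the paper. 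A secondary caveat: the paper's standing assumptions in Section~\ref{sec:notation} never state that $\otimes$ is right exact, so your inference ``$\beta_A$ epi $\Rightarrow\ide_Z\otimes\beta_A$ epi'' is an additional assumption; it is harmless in the main application, since in Theorem~\ref{prop:RB-YD} the counit $\beta_A=\varepsilon_H\otimes\ide_A$ is a split epimorphism, and split epimorphisms are preserved by any functor.
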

\begin{proof}
Given the $\cB$-central set-up provided in previous sections, the proof of the theorem for the relative monoidal center $\cZ_\cB(\cC)$ is now analogous to Davydov's formal proof for $\cZ(\cC)$ in \cite{Dav1}*{Theorem~5.4}. The proof crucially uses the hypothesis that $\beta_A$ is an epimorphism. 
\end{proof}

\subsection{Morita invariants} \label{sec:Morita} 
Next, we turn our attention to module categories over the monoidal categories discussed above. A {\it left module category} over a monoidal category $\cC$ is a category  $\cM$ equipped with an bifunctor $\ast\colon \cC \times \cM \to \cM$  and natural isomorphisms for associativity $$\{m_{X,Y,M}\colon (X\otimes Y) \ast M \overset{\sim}{\to} X \ast (Y \ast M) ~|~ X,Y \in \cC,~ M \in \cM\}$$ and for unitality, which are compatible with the structure of $\cM$; see \cite{EGNO}*{Section~7.1} for details. A morphism between two $\cC$-module categories $\cM$ and $\cN$ is a functor $F\colon \cM \to \cN$ equipped with natural isomorphisms
$$\{s_{X,M} = s_{X,M}^{F}\colon F(X \ast M) \overset{\sim}{\to} X \ast F(M) ~|~ X \in \cC, ~M \in \cM\}$$
that are compatible with the associativity and unitality structure of $\cM$, cf. \cite{EGNO}*{Section~7.2}. The collection of $\cC$-module endofunctors of a $\cC$-module category $\cM$ is a monoidal category and is denoted by $\cEnd_\cC(\cM)$.

\smallskip

For an algebra $A \in \cC$, recall that $\rmod{A}(\cC)$  is the category of right modules over $A$ in $\cC$. It is a left $\cC$-module category via $X \ast (M, \rho) := (X \otimes M,~ \text{Id}_X \otimes \rho)$ for all $X \in \cC$ and $M \in \rmod{A}(\cC)$ with structure morphism $\rho\colon M \otimes A \to M$ in $\cC$.
\smallskip

\begin{definition}
We say that two algebras $A$ and $A'$ in a monoidal category $\cC$ are {\it Morita equivalent} if $\rmod{A}(\cC)$ and $\rmod{A'}(\cC)$ are equivalent as left $\cC$-module categories.
\end{definition}

The above generalizes the notion of Morita equivalence for rings or for algebras in $ \Vect$. We will establish the following result later in this section. 

\begin{theorem} \label{thm:Morita}
Take $\cC$ a $\cB$-central monoidal category, and let $A$ and $A'$ be algebras in~$\cC$. Suppose that $\rmod{A}(\cC)$ and $\rmod{A'}(\cC)$ are equivalent as left $\cC$-module categories. Then, the $\cB$-centers $Z_\cB(A)$ and $Z_\cB(A')$ are isomorphic as (commutative) algebras in $\cZ_{\cB}(\cC)$. In particular, the $\cB$-center of an algebra in $\cC$ is a Morita invariant.
\end{theorem}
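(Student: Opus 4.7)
The plan is to realize the $\cB$-center $Z_\cB(A)$ as an intrinsic invariant of the left $\cC$-module category $\rmod{A}(\cC)$, from which Morita invariance will follow automatically from preservation of terminal objects under equivalences. This generalizes Davydov's strategy in the $\cB = \Vect$ case (cf.~\cite{Dav1}).

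For an arbitrary left $\cC$-module category $\cM$ with action $\ast \colon \cC \times \cM \to \cM$, I would introduce a braided monoidal category $\cZ_\cB(\cM)$ whose objects are pairs $((Z, c_{Z,-}), \nu)$, where $(Z, c_{Z,-})$ is an object of $\cZ_\cB(\cC)$ and $\nu = \{\nu_M\colon Z \ast M \to M\}_{M \in \cM}$ is a natural transformation satisfying the $\cC$-module compatibility that, for each $X \in \cC$, the equality $\nu_{X \ast M} = (\ide_X \ast \nu_M)(c_{Z,X} \ast \ide_M)$ holds modulo the module associator isomorphism $(Z\otimes X)\ast M \cong Z\ast(X\ast M)$. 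Morphisms are those in $\cZ_\cB(\cC)$ intertwining the families $\nu$, and the braided monoidal structure is inherited componentwise from $\cZ_\cB(\cC)$; by Lemma \ref{lem:terminal}, any terminal object of $\cZ_\cB(\cM)$ is a braided commutative algebra in $\cZ_\cB(\cC)$.

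Next, I would establish a braided monoidal equivalence $\cZ_\cB(A) \simeq \cZ_\cB(\rmod{A}(\cC))$ by sending $((Z, c), \zeta)$ to the pair $((Z, c), \nu)$ with $\nu_M := \rho_M(\ide_M \otimes \zeta) c_{Z, M}$ for each $(M, \rho_M) \in \rmod{A}(\cC)$. Diagram~\eqref{Bcenter-diag} is precisely the condition that $\nu_A$ is right $A$-linear; applying the hexagon axiom for $c_{Z,-}$ to the tensor product $M\otimes A$ propagates right $A$-linearity to every $\nu_M$, and yields naturality together with $\cC$-module compatibility. The inverse functor sends $\nu$ to $\zeta := \nu_A (\ide_Z \otimes u_A)$; since any $M \in \rmod{A}(\cC)$ admits the coequalizer presentation $M \otimes A \otimes A \rightrightarrows M \otimes A \twoheadrightarrow M$ by the pair $(\rho_M \otimes \ide_A, \ide_M \otimes m_A)$, the entire family $\nu$ is determined by $\nu_A$ via naturality. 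Finally, a $\cC$-module equivalence $F\colon \rmod{A}(\cC) \overset{\sim}{\to} \rmod{A'}(\cC)$ with structure isomorphisms $s^F_{X, M}\colon F(X \ast M) \overset{\sim}{\to} X\ast F(M)$ transports $\nu$ to $\nu'_N := F(\nu_{F^{-1}(N)})\circ (s^F_{Z, F^{-1}(N)})^{-1}$, producing a braided monoidal equivalence $\cZ_\cB(\rmod{A}(\cC)) \simeq \cZ_\cB(\rmod{A'}(\cC))$. Composing the three equivalences sends the terminal object $Z_\cB(A)$ to the terminal object $Z_\cB(A')$, giving the asserted isomorphism of commutative algebras in $\cZ_\cB(\cC)$.

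The chief obstacle will be establishing the equivalence $\cZ_\cB(A) \simeq \cZ_\cB(\rmod{A}(\cC))$. The forward direction requires a graphical-calculus verification that $\nu_M$ is right $A$-linear for every right $A$-module $M$, combining the compatibility of the half-braiding $c_{Z,-}$ with tensor products (Definition~\ref{def:ZBC}(i)) with Diagram~\eqref{Bcenter-diag} and the compatibility with the central functor $\rT$. The reverse direction, reconstructing the full family $\nu$ from the single morphism $\zeta\colon Z \to A$, hinges on the coequalizer presentation of objects in $\rmod{A}(\cC)$ by free modules of the form $X \otimes A$, together with the $\cC$-module compatibility built into the definition of $\cZ_\cB(\cM)$. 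Once this equivalence is in place, the rest of the argument is formal.
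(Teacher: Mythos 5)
Your proposal is correct and follows essentially the same route as the paper: the paper likewise defines a $\cB$-center $Z_\cB(\cM)$ for a left $\cC$-module category $\cM$ (as the terminal object of a comma category, Definition~\ref{def:ZBM}), reduces the theorem to the monoidal equivalence between that category for $\cM = \rmod{A}(\cC)$ and $\cZ_\cB(A)$, and then invokes Davydov's argument for \cite{Dav1}*{Theorem~6.2} restricted to the half-braidings in $\cZ_\cB(\cC)$. The explicit formulas you give ($\nu_M = \rho_M(\ide_M\otimes\zeta)c_{Z,M}$, $\zeta = \nu_A(\ide_Z\otimes u_A)$, and the reconstruction of $\nu$ from $\nu_A$ via free-module presentations) are precisely the details the paper delegates to that citation.
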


This result is a generalization of \cite{Dav1}*{Theorem~6.2 and Corollary~6.3} in the case when $\cB = \Vect$, and see the discussion in  Remark~\ref{rem:cent} in the next section for an example of how it can be used in practice. For the proof of the theorem above, we need the next construction.

\begin{definition} \label{def:ZBM} Take $\cC$ a $\cB$-central monoidal category, and let $\cM$ be a left $\cC$-module category. Consider the monoidal functor $$E\colon \cZ_\cB(\cC) \to \cEnd_\cC(\cM), \qquad (V, c_{V,-}) \mapsto (L_V,s^{L_V}),$$ where
$L_V\colon \cM \to \cM$ is the functor given by $M \mapsto V \ast M$, and $s$ is the collection of natural isomorphisms,  for each $X \in \cC$ and $M \in \cM$, given by
$$s_{X,M}^{L_V}:=m_{X,V,M} \; (c_{V,X} \ast \text{Id}_M) \;m_{V,X,M}^{-1}\colon L_V(X \ast M) \overset{\sim}{\to} X \ast L_V(M).$$ Then the {\it $\cB$-center of $\cM$} is defined to be the terminal object in the comma category $E \hspace{-.04in} \downarrow \hspace{-.04in} I_{\cEnd_\cC(\cM)}$.

\smallskip

Namely, $Z_{\cB}(\cM)$ is the terminal object 
amongst pairs $((Z,c_{Z,-}), z)$ for $(Z,c_{Z,-}) \in \cZ_{\cB}(\cC)$ and $\Set{z = z_M\colon Z \ast M \to M}_{\cM\in \cM}$ a natural transformation, such that for all $X \in \cC$, $M\in \cM$, the following diagram commutes. 
\begin{align*}
\xymatrix@R=1.5pc@C=3pc{(Z \otimes X) \ast M \ar[rr]^-{c_{Z,X} \ast \text{Id}_M}\ar[d]_{m_{Z,X,M}}&&
(X \otimes Z) \ast M 
\ar[d]^{m_{X,Z,M}}\\
Z \ast (X \ast M) \ar[rd]_{z_{X \ast M}}&& X \ast (Z \ast M) \ar[ld]^{~\text{Id}_X \ast z_M}\\
&X \ast M&
}
\end{align*}
\end{definition}

\smallskip

\begin{proof}[Proof of Theorem~\ref{thm:Morita}]
 Analogous to the proof of Proposition~\ref{prop:ZBA}, we first have that $Z_\cB(\cM)$ is a commutative algebra in $\cZ_\cB(\cC)$. Moreover, if $\cM$ and $\cM'$ are equivalent $\cC$-module categories, then $Z_\cB(\cM) \cong Z_\cB(\cM')$ in $\ComAlg(\cZ_\cB(\cC))$. So, it suffices to establish  that $Z_\cB(\rmod{A}(\cC)) \cong Z_\cB(A)$ in $\ComAlg(\cZ_\cB(\cC))$. In turn, it suffices to show that the comma category $E \hspace{-.04in} \downarrow \hspace{-.04in} I_{\cEnd_\cC(\rmod{A}(\cC))}$ used in Definition~\ref{def:ZBM} is monoidally equivalent to the  category $\cZ_\cB(A)$ from Definition~\ref{def:ZBA}. At this point, one can proceed exactly as in the proof of \cite{Dav1}*{Theorem~6.2} using only the half-braidings of the full braided monoidal subcategory $\cZ_\cB(\cC)$ of $\cZ(\cC)$ in order to finish the proof.
\end{proof}

The converse of Theorem~\ref{thm:Morita} holds when $\cB=\Vect$, with $\cC$ a (braided monoidal) modular tensor category,  and the algebras in $\cC$ in question being simple and non-degenerate, by \cite{KR}*{Section~4.4}. So we ask:

\begin{question} \label{ques:Morita-conv}
In general, what conditions do we need on $\cC$, on $\cB$, and on algebras in $\cC$ for a converse of Theorem~\ref{thm:Morita} to hold?
\end{question}

We discuss the special setting of when $\cC$ is braided  next.

\subsection{The case when \texorpdfstring{$\cC$}{C} is braided} \label{sec:Cbraided}
As mentioned in Example~\ref{ex:augment}(2) and Proposition~\ref{prop:rel-center}(1), we have that $\cB$ is $\cB$-central, and that $\cZ_\cB(\cB)$ is isomorphic to $\cB$ as braided monoidal categories. 
For instance, take $\cC = \lmod{\Bbbk}(\cB)$ --- this can be identified canonically with $\cB$ and is isomorphic to $\cZ_\cB(\cB)$ as braided monoidal categories. Moreover in this case, there is a natural isomorphism $\rR_\cB \overset{\sim}{\implies} \ide_\cB$, and the $\cB$-center of $A \in \Alg(\cB)$ is given by $Z_\cB(A) \cong C^l(\rR_\cB(A)) \cong C^l(A)$ as commutative algebras in $\cZ_\cB(\cB)$ by Theorem~\ref{thm:Bcenter}.

\smallskip

So, when $\cC$ is braided, one does not need to work outside of $\cC$ to get Morita invariants for algebras in $\cC$ via Theorem~\ref{thm:Morita}, as $\cC$ is isomorphic to its relative monoidal center. This is computationally more feasible than working with the full center construction of \cite{Dav1}; see, for example, Remark~\ref{rem:gens}.
In particular, constructing Morita invariants of algebras in modular tensor categories  was one of the motivations behind Davydov's work \cite{Dav1} and other previous works \cites{FFRS, KR} --- now our construction of $\cB$-centers makes this goal more tractable computationally. 


\medskip

\section{Connection to centralizer algebras} \label{sec:cent}

In this section, we restrict our attention to the situation where $H$ is a Hopf algebra in a braided monoidal category $\cB$, and $\cC=\lmod{H}(\cB)$. Take $A \in \Alg(\cC)$. We saw in Theorem \ref{thm:Bcenter} that the $\cB$-center $Z_\cB(A)$ can be computed as the left center of $\rR_\cB(A)$, and we will now realize $Z_\cB(A)$ as a braided version of a centralizer algebra --- see Theorem~\ref{cent-thm} below. We begin by discussing braided smash product algebras in Section~\ref{sec:smash}. Then Theorem~\ref{cent-thm} is established in Section~\ref{sec:cent-result}, and consequences of this result are provided in Section~\ref{sec:cent-conseq}.

\subsection{Braided smash product algebras} \label{sec:smash} Consider the following terminology. 

\begin{definition} \label{def:smash} \cite{Majid-cross}*{Proposition~2.3}
Take $H$ a Hopf algebra in $\cB$, and $A$ an algebra in $\lmod{H}(\cB)$. The {\it (braided) smash product algebra} or {\it (braided) crossed product algebra} $A \rtimes H$ is the algebra in $\cB$ that is $A \otimes H$  as an object in $\cB$ equipped with multiplication $$m_{A \rtimes H} = (m_A \otimes m_H)(\ide_A \otimes a_A \otimes \ide_{H \otimes H})(\ide_{A \otimes H} \otimes \Psi_{H,A} \otimes \ide_H)(\ide_A \otimes \Delta_H \otimes \ide_{A \otimes H})$$ and with unit $u_{A \rtimes H} = u_A \otimes u_H$.
\end{definition}

Moreover, the result below describes the category of modules over braided smash product algebras.

\begin{proposition} \label{prop:smash} \cite{Majid-cross}*{Proposition~2.7}
There is an equivalence of monoidal categories  $$\lmod{A}(\lmod{H}(\cB)) \simeq \lmod{A\rtimes H}(\cB),$$ where an object $V$ of $\lmod{A}(\lmod{H}(\cB))$ with left $H$-action $a_V^H$ and left $A$-action  $a_V^A$ gets sent to the object $V$ with $A \rtimes H$-action $a_{A \rtimes H} = a_V^A(\ide_A \otimes \; a_V^H)$. \qed
\end{proposition}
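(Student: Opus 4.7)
The plan is to construct a pair of explicit quasi-inverse functors between the two categories and then verify compatibility with their module-theoretic structures. In the forward direction, define
$$F\colon \lmod{A}(\lmod{H}(\cB)) \to \lmod{A \rtimes H}(\cB)$$
by sending $(V, a_V^H, a_V^A)$ to $V$ with action $a_V := a_V^A(\ide_A \otimes a_V^H)$ and acting as the identity on morphisms. In the reverse direction, define
$$G\colon \lmod{A\rtimes H}(\cB) \to \lmod{A}(\lmod{H}(\cB))$$
by sending $(V, a_V)$ to $V$ equipped with
$$a_V^A := a_V(\ide_A \otimes u_H \otimes \ide_V), \qquad a_V^H := a_V(u_A \otimes \ide_{H \otimes V}).$$
Since $F$ and $G$ act as the identity on underlying objects and morphisms, once both are known to land in the asserted target categories the identities $FG = \ide$ and $GF = \ide$ at the level of actions will follow from a short computation showing that $m_{A\rtimes H}$ evaluated on $(\ide_A \otimes u_H) \otimes (u_A \otimes \ide_H)$ collapses to $\ide_{A \otimes H}$; this uses Definition~\ref{def:smash}, the unit axioms of $A$ and $H$, and the fact that $u_A$ is $H$-linear (which holds since $A \in \Alg(\lmod{H}(\cB))$).

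The essential verification is that $F(V)$ satisfies the $A \rtimes H$-module axiom $a_V(m_{A\rtimes H} \otimes \ide_V) = a_V(\ide_{A\rtimes H} \otimes a_V)$. Expanding $m_{A \rtimes H}$ via Definition~\ref{def:smash} on the left and unfolding $a_V = a_V^A(\ide_A \otimes a_V^H)$ twice on the right yields two morphisms $A \otimes H \otimes A \otimes H \otimes V \to V$, whose equality I would verify in graphical calculus using: the $A$-action axiom on $V$, the $H$-action axioms on $V$ and on $A$, coassociativity of $\Delta_H$, naturality of the braiding $\Psi$, and crucially the $H$-linearity of $a_V^A$ with respect to the tensor-product $H$-action \eqref{tensorproductaction} on $A \otimes V$---which is precisely the condition that $V \in \lmod{A}(\lmod{H}(\cB))$. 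Checking that $G(V)$ lies in $\lmod{A}(\lmod{H}(\cB))$ runs dually: restricting the $A\rtimes H$-module axiom along appropriate factors-through-units yields the separate $A$- and $H$-action axioms, while restricting along the mixed composition $(u_A \otimes \ide_H) \otimes (\ide_A \otimes u_H)$ and using Definition~\ref{def:smash} extracts precisely the $H$-linearity of $a_V^A$.

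I expect the hard part to be the graphical verification of the $A\rtimes H$-module axiom for $F(V)$, since this is the single step in which the formula defining $m_{A\rtimes H}$---which simultaneously invokes the braiding, the coproduct $\Delta_H$, and the $H$-action on $A$---interacts with all the remaining data at once. The argument parallels the classical Sweedler-notation proof valid when $\cB = \Vect$, but careful bookkeeping is needed because $\Psi$ is not symmetric and the order of crossings matters. Finally, for $F$ and $G$ to intertwine the monoidal structures inherited from $\lmod{H}(\cB)$, one checks that under $F$ the $A\rtimes H$-action on $V \otimes W$ built from the tensor-product $H$-action \eqref{tensorproductaction} and the diagonal $A$-action coincides with $a_{V \otimes W}$ computed directly; this identity reduces once more to coassociativity of $\Delta_H$ and naturality of $\Psi$.
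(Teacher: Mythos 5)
The paper does not actually prove this proposition --- it is imported wholesale from Majid's work on cross products --- so your proposal supplies an argument the paper outsources. Your construction of the category equivalence is correct and is the standard one: $F$ and $G$ as you define them are mutually inverse; the collapse of $m_{A\rtimes H}$ on $(\ide_A \otimes u_H)\otimes(u_A \otimes \ide_H)$ follows from Definition~\ref{def:smash} together with unitality of $\Delta_H$, of $a_A$, and of $\Psi$; and the decisive point is exactly the one you isolate, namely that the $A\rtimes H$-module axiom for $F(V)$, tested on $(u_A \otimes \ide_H)\otimes(\ide_A \otimes u_H)$, is equivalent to $H$-linearity of $a_V^A$ with respect to the tensor-product action \eqref{tensorproductaction}, i.e.\ to $V$ being an $A$-module internal to $\lmod{H}(\cB)$.

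The gap is your final paragraph. In the setting of Definition~\ref{def:smash}, $A$ is only an algebra in $\lmod{H}(\cB)$: it has no coproduct, so there is no ``diagonal $A$-action'' on $V \otimes W$, and $A \rtimes H$ is only an algebra in $\cB$, so $\lmod{A \rtimes H}(\cB)$ has no tensor product of modules either. The compatibility check you propose therefore cannot even be formulated: neither category is monoidal in the sense your paragraph assumes. What both categories do carry, and what your functors (being identities on underlying objects and morphisms) manifestly preserve, is a right module-category structure: $\lmod{A}(\lmod{H}(\cB))$ is a right $\cB$-module category via $(V,X)\mapsto V \otimes \rT(X)$, where $\rT$ is the central functor of Example~\ref{ex:augment}(3) and all actions sit on the left tensor factor (for the $H$-action this uses the counit axiom, since $\rT(X)$ carries the trivial action); $\lmod{A\rtimes H}(\cB)$ is a right $\cB$-module category with $A\rtimes H$ acting on the left factor; and one checks $F(V \otimes \rT(X)) = F(V)\otimes X$ strictly. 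That module-category compatibility is the correct content behind the word ``monoidal'' for a general algebra $A$, and it is all the paper ever uses. Your diagonal-action argument becomes available only under the stronger hypothesis that $A$ is a bialgebra in $\lYD{H}(\cB)$, in which case $A \rtimes H$ is a bialgebra in $\cB$ (bosonization/Radford biproduct, cf.\ Example~\ref{propersubcat}) and the equivalence is then genuinely monoidal.
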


Next, we provide a preliminary result on braided smash product algebras.

\begin{definition}\label{phi-def}
We define a map $\varphi\colon H\otimes A\to A\rtimes H$ by 
\begin{align*}
\varphi~=~\Psi^{-1}_{H,A}(S^{-1}\otimes \ide_A)~=~(\ide_A \otimes S^{-1})\Psi^{-1}_{H,A}~=~\vcenter{\hbox{
\begingroup%
  \makeatletter%
  \providecommand\color[2][]{%
    \errmessage{(Inkscape) Color is used for the text in Inkscape, but the package 'color.sty' is not loaded}%
    \renewcommand\color[2][]{}%
  }%
  \providecommand\transparent[1]{%
    \errmessage{(Inkscape) Transparency is used (non-zero) for the text in Inkscape, but the package 'transparent.sty' is not loaded}%
    \renewcommand\transparent[1]{}%
  }%
  \providecommand\rotatebox[2]{#2}%
  \newcommand*\fsize{\dimexpr\f@size pt\relax}%
  \newcommand*\lineheight[1]{\fontsize{\fsize}{#1\fsize}\selectfont}%
  \ifx\svgwidth\undefined%
    \setlength{\unitlength}{25.87051449bp}%
    \ifx\svgscale\undefined%
      \relax%
    \else%
      \setlength{\unitlength}{\unitlength * \real{\svgscale}}%
    \fi%
  \else%
    \setlength{\unitlength}{\svgwidth}%
  \fi%
  \global\let\svgwidth\undefined%
  \global\let\svgscale\undefined%
  \makeatother%
  \begin{picture}(1,1.58972305)%
    \lineheight{1}%
    \setlength\tabcolsep{0pt}%
    \put(0,0){\includegraphics[width=\unitlength,page=1]{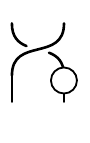}}%
    \put(0.5502787,0.5870745){\color[rgb]{0,0,0}\makebox(0,0)[lt]{\lineheight{0}\smash{\begin{tabular}[t]{l}$-$\end{tabular}}}}%
    \put(-0.01283435,1.47225099){\color[rgb]{0,0,0}\makebox(0,0)[lt]{\lineheight{1.25}\smash{\begin{tabular}[t]{l}$H$\end{tabular}}}}%
    \put(0.56697631,1.47225099){\color[rgb]{0,0,0}\makebox(0,0)[lt]{\lineheight{1.25}\smash{\begin{tabular}[t]{l}$A$\end{tabular}}}}%
    \put(-0.01283435,0.02272435){\color[rgb]{0,0,0}\makebox(0,0)[lt]{\lineheight{1.25}\smash{\begin{tabular}[t]{l}$A$\end{tabular}}}}%
    \put(0.56697631,0.02272435){\color[rgb]{0,0,0}\makebox(0,0)[lt]{\lineheight{1.25}\smash{\begin{tabular}[t]{l}$H$\end{tabular}}}}%
  \end{picture}%
\endgroup%
}}~,
\end{align*}
and note that $\varphi$ is an isomorphism in $\cB$, with $\varphi^{-1} = \Psi_{A,H}(\ide_A \otimes S) = (S \otimes \ide_A)\Psi_{A,H}$.
\end{definition}

\begin{lemma}\label{AH-YD} The isomorphism $\varphi$ defines on $A\rtimes H$ the structure of an $H$-Yetter--Drinfeld module in $\cB$ from such a structure on $\rR_\cB(A)=H\otimes A$ via Theorem~\ref{prop:RB-YD}. The $H$-action and $H$-coaction on $A\rtimes H$ are given by
\begin{align*}
a^{\rtimes}&= (\ide_A \otimes m_H)\Psi^{-1}_{H,A\otimes H} (S^{-1}\otimes a_{A\otimes H})(\Delta_H \otimes \ide_{A\otimes H}),\\
\delta^{\rtimes}&= (S\otimes \ide_{A\otimes H})\Psi_{A\otimes H,H} (\ide_A \otimes\Delta_H),
\end{align*}
using $a_{A\otimes H}$ from Equation~\ref{tensorproductaction}. 
\end{lemma}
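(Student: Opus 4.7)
The plan is to first observe that since $\varphi \colon H \otimes A \to A \rtimes H$ is an isomorphism in $\cB$, it can be used to transport the Yetter--Drinfeld structure on $\rR_\cB(A) = H \otimes A$ from Theorem~\ref{prop:RB-YD} onto the object $A \otimes H$ underlying $A \rtimes H$, by declaring $\varphi$ itself to be an isomorphism in $\lYD{H}(\cB)$. This is unambiguous and automatically forces
\[
a^\rtimes = \varphi \circ a^\rR \circ (\ide_H \otimes \varphi^{-1}), \qquad \delta^\rtimes = (\ide_H \otimes \varphi) \circ \delta^\rR \circ \varphi^{-1},
\]
and the Yetter--Drinfeld compatibility \eqref{eq:HYDB} transports along this isomorphism for free. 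What remains is to verify that these transported structures admit the closed-form descriptions given in the statement.

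For the coaction, I would substitute $\delta^\rR = \Delta_H \otimes \ide_A$ and $\varphi^{-1} = \Psi_{A,H}(\ide_A \otimes S)$ to obtain
\[
\delta^\rtimes \;=\; (\ide_H \otimes \varphi)(\Delta_H \otimes \ide_A) \Psi_{A,H}(\ide_A \otimes S),
\]
and simplify graphically. The key moves are: using naturality of $\Psi$ to slide $\Delta_H$ through $\Psi_{A,H}$, producing $\Psi_{A, H \otimes H}$; then using the expression $\varphi = \Psi^{-1}_{H,A}(S^{-1} \otimes \ide_A)$ on the inner two factors to introduce an $S^{-1}$ that cancels against the $S$ originating from $\varphi^{-1}$ after another application of naturality. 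What is left is exactly $(S \otimes \ide_{A \otimes H}) \Psi_{A \otimes H, H}(\ide_A \otimes \Delta_H)$, once $\Psi_{A \otimes H, H}$ is decomposed into elementary braidings.

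For the action, I would substitute the explicit formula for $a^\rR$ from \eqref{eq:aRdR} into the conjugation $a^\rtimes = \varphi \circ a^\rR \circ (\ide_H \otimes \varphi^{-1})$ and simplify. The main obstacle is bookkeeping: the formula for $a^\rR$ already contains two coproducts, two products, a braiding, and one antipode, and pre- and post-composition with $\varphi^{\pm 1}$ introduces two further antipodes and two further braidings. The route is to use naturality of $\Psi$ to migrate the $S^{-1}$ from $\varphi$ and the $S$ from $a^\rR$ past the other strands so that they end up adjacent, apply the antipode axiom $m_H(S \otimes \ide_H)\Delta_H = u_H \varepsilon_H$ to collapse the resulting $S$-$S^{-1}$-multiplication subdiagram, and then use coassociativity and the defining equation for $a_{A \otimes H}$ from \eqref{tensorproductaction} to regroup what remains. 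This collapses the picture to the stated expression for $a^\rtimes$. I would perform the calculation in graphical calculus, exactly as in the proof of Theorem~\ref{prop:RB-YD}, since Sweedler notation is unavailable for objects of $\cB$ in general.
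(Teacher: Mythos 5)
Your proposal matches the paper's proof: the paper likewise defines $a^{\rtimes}=\varphi\, a^{\rR}(\ide_H\otimes \varphi^{-1})$ and $\delta^{\rtimes}=(\ide_H\otimes\varphi)\,\delta^{\rR}\varphi^{-1}$ by transporting the Yetter--Drinfeld structure of $\rR_\cB(A)$ along the isomorphism $\varphi$, and observes that the compatibility condition is inherited for free. Your additional graphical verification that these conjugates simplify to the stated closed forms is detail the paper leaves implicit, and your outline of it is essentially right; the one caveat is that the key cancellations rest on the antipode's braided anti-(co)multiplicativity (e.g.\ $\Delta_H S=\Psi_{H,H}(S\otimes S)\Delta_H$ and $S^{-1}m_H=m_H\Psi^{-1}_{H,H}(S^{-1}\otimes S^{-1})$) together with $S^{-1}S=\ide_H$, rather than on naturality of the braiding and the antipode axiom alone.
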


\begin{proof}
The  action $a^{\rtimes}$ and the coaction $\delta^{\rtimes}$  are defined from the action $a^\rR$ and coaction $\delta^\rR$ in Theorem~\ref{prop:RB-YD} by requiring that $\varphi$ becomes a morphism of Yetter--Drinfeld modules. That is, 
\begin{align*}
 a^{\rtimes}&=\varphi a^{\rR}(\ide_H\otimes \varphi^{-1}), & \delta^{\rtimes}&=(\ide_H\otimes\varphi)\delta^{\rR}\varphi^{-1}.
\end{align*}
Since $a^{\rR},\delta^{\rR}$ are Yetter--Drinfeld compatible, $a^{\rtimes},\delta^{\rtimes}$ are also Yetter--Drinfeld compatible.
\end{proof}

\subsection{Main result} \label{sec:cent-result}
The following theorem generalizes \cite{Dav2}*{Theorem 5.3}.

\begin{theorem}\label{cent-thm}
For any algebra $A$ in $\lmod{H}(\cB)$, its $\cB$-center $Z_\cB(A)$ is isomorphic to $\Cent^l_{A\rtimes H}(A)^{\Psi^{-1}}$ as an algebra in $\cB$. 
\end{theorem}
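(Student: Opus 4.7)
The plan is to combine Theorem~\ref{thm:Bcenter} with the isomorphism $\varphi$ of Definition~\ref{phi-def} so as to identify $Z_\cB(A)$ first with the left center $C^l(\rR_\cB(A))$ inside $\rR_\cB(A)=H\otimes A$, and then with the left centralizer of $A$ inside $A\rtimes H$ equipped with the braided-opposite multiplication. To apply Theorem~\ref{thm:Bcenter} we must check that the counit $\beta$ of the adjunction $\rF_\cB\dashv\rR_\cB$ constructed in Theorem~\ref{prop:RB-YD} is an epimorphism: formula~\eqref{eq:alphabeta} gives $\beta_A=\varepsilon_H\otimes\ide_A$, which is split by $u_H\otimes\ide_A$ and hence a split epimorphism in $\cB$. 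Consequently $Z_\cB(A)\cong C^l(\rR_\cB(A))$ as commutative algebras in $\cZ_\cB(\cC)\simeq\lYD{H}(\cB)$, and a fortiori as algebras in $\cB$.

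The core step is to match universal properties along $\varphi$. Given a pair $(C,\gamma\colon C\to\rR_\cB(A))\in\mathcal{C}^l(\rR_\cB(A))$, I would set $\tilde\gamma:=\varphi\gamma\colon C\to A\rtimes H$ and show via graphical calculus that the left center condition
\[
m_{\rR_\cB(A)}(\gamma\otimes\ide_{\rR_\cB(A)})=m_{\rR_\cB(A)}(\ide_{\rR_\cB(A)}\otimes\gamma)\,\Psi^{\mathbf{YD}}_{C,\rR_\cB(A)}
\]
is equivalent to the centralizer diagram of Definition~\ref{def:leftcentzer} for $\tilde\gamma$, relative to the canonical embedding $\phi=u_H\otimes\ide_A\colon A\hookrightarrow A\rtimes H$, the braiding $\Psi_{C,A}$ of $\cB$, and the product $m_{A\rtimes H}$ of Definition~\ref{def:smash}. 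The fact that testing against $\phi(A)$ suffices, rather than against all of $\rR_\cB(A)=H\otimes A$, reflects the point that the half-braiding of $C\in\cZ_\cB(\cC)$ with the image $\rT(H)$ of the central functor is rigidly prescribed by Definition~\ref{def:ZBC}(ii) as $\sigma_{C,H}$, so the ``$H$-strand'' component of the left center condition is automatic from membership of $C$ in $\cZ_\cB(\cC)$.

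The diagrammatic verification above is the principal obstacle: one must combine~\eqref{eq:YDbraid} for $\Psi^{\mathbf{YD}}$, the product $m_{\rR_\cB(A)}$ of Corollary~\ref{cor:RBA}, the product $m_{A\rtimes H}$ of Definition~\ref{def:smash}, and the explicit formula $\varphi=\Psi^{-1}_{H,A}(S^{-1}\otimes\ide_A)$, and then cancel strands using the antipode axioms and naturality of $\Psi$. Finally, to upgrade this subobject identification to an algebra isomorphism, I would verify that $\varphi$ intertwines the restriction of $m_{\rR_\cB(A)}$ to $C^l(\rR_\cB(A))$ with the restriction of $m_{A\rtimes H}\,\Psi^{-1}_{A\rtimes H,A\rtimes H}$ to $\Cent^l_{A\rtimes H}(A)$. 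The braided-opposite decoration $(-)^{\Psi^{-1}}$ in the statement appears because each of the two copies of $\varphi$ contributes a crossing $\Psi^{-1}_{H,A}$, and together with the $\Psi_{A,H}$ internal to $m_{\rR_\cB(A)}$ they reverse the braided order of the two factors of $A\rtimes H$. In Davydov's symmetric-monoidal setting \cite{Dav2}*{proof of Theorem~5.3} the decoration $(-)^{\Psi^{-1}}$ is invisible because the braiding on $\cB=\Vect$ is involutive; over a general braided $\cB$ it is precisely the price of the extra crossings introduced by~$\varphi$.
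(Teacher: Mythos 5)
Your plan has the same skeleton as the paper's proof: check the counit hypothesis, invoke Theorem~\ref{thm:Bcenter} to replace $Z_\cB(A)$ by $C^l(\rR_\cB(A))$, transport the universal property along $\varphi$ from Definition~\ref{phi-def}, and track the extra crossings to explain the $\Psi^{-1}$-decoration. Your verification that $\beta_A=\varepsilon_H\otimes\ide_A$ is split epi (hence epi in $\cC$, as the forgetful functor to $\cB$ is faithful) is correct, and is a hypothesis the paper's proof leaves implicit.

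However, there is a genuine gap at the crux. You justify the reduction ``testing against $A$ suffices, rather than against all of $\rR_\cB(A)=H\otimes A$'' by appealing to Definition~\ref{def:ZBC}(ii): since the half-braiding of $C$ with $\rT(H)$ is forced to be $\sigma_{C,H}$, the $H$-strand part of the left-center condition is ``automatic.'' This is wrong on two counts. First, the $H$-strand of $\rR_\cB(A)$ is not $\rT(H)$: by Theorem~\ref{prop:RB-YD} (cf.\ Example~\ref{adjoint-expl}) it carries the adjoint $H$-action and the regular coaction $\Delta$, both nontrivial in general, so it does not lie in the image of the central functor and Definition~\ref{def:ZBC}(ii) says nothing about braidings with it. Second, even for a test object whose braiding with $C$ is completely prescribed, the left-center equation $m(\gamma\otimes\ide)=m(\ide\otimes\gamma)\Psi^{\bf YD}$ is still a genuine constraint on $\gamma$; knowing the braiding never makes a commutation condition hold for free. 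The fact that the $H$-strand condition is vacuous here is true, but it is a computation, not a formal consequence of centrality: it is precisely the content of the paper's Lemma~\ref{cent-lemma}, whose proof expands $m_{H\otimes A}\bigl(\Psi^{\bf YD}_{\rR_\cB(A),\rR_\cB(A)}\bigr)^{-1}$ using the explicit coinduced structure $a^{\rR}$, $\delta^{\rR}=\Delta\otimes\ide_A$, cancels antipodes, and only then observes that the incoming $H$-strand is multiplied last on the left on both sides, so that precomposing with $u_H$ loses no information. Without this lemma (or an equivalent computation), your equivalence between $\mathcal{C}^l(\rR_\cB(A))$ and $\mathcal{C}^l_{A\rtimes H}(A)$ is unproven, and the rest of the argument has nothing to stand on. Two smaller corrections: the embedding of $A$ into $A\rtimes H$ is $\ide_A\otimes u_H$ (the underlying object is $A\otimes H$), not $u_H\otimes\ide_A$; and your closing claim that the decoration is ``invisible'' for $\cB=\Vect$ contradicts Remark~\ref{rem:cent}, where the authors note that Davydov's statement in \cite{Dav2} needs $(-)^{\oop}$ added --- in the symmetric case the $\Psi^{-1}$-opposite does not disappear, it merely coincides with the ordinary opposite.
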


Before proving the theorem, we need the following lemma.

\begin{lemma}\label{cent-lemma} Let $A$ be an algebra in $\cC=\lmod{H}(\cB)$. 
The left center $C^l(\rR_\cB(A))$ is the terminal object in the category of morphisms $\gamma\colon C\to H\otimes A$ in $\cB$ such that
\begin{align}\label{cent-cond1}
(\ide_H\otimes m_A)(\gamma\otimes \ide_A)&=(\ide_H\otimes m_A)(\ide_H\otimes a_A\otimes \ide_A)(\Delta_H\otimes \Psi_{A, A})(\gamma\otimes \ide_A),
\end{align}
or, equivalently,
\begin{equation}\label{cent-cond2}\begin{split}
(\ide_H \otimes m_A)(\Psi_{A,H} \otimes \ide_A)(\ide_A \otimes \gamma)=&(\ide_H \otimes  m_A\Psi^{-1}_{A,A}) (\ide_H\otimes a_A\Psi^{-1}_{A,H}\otimes \ide_A)\\
&\circ  (\Psi_{A,H} \otimes S^{-1}\otimes \ide_A) (\ide_A \otimes \Delta_H\otimes \ide_A)(\ide_A \otimes \gamma).\end{split}
\end{equation}
\end{lemma}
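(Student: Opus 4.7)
The plan is to unpack the universal property of $C^l(\rR_\cB(A))$ inside $\lYD{H}(\cB)$ (Definition \ref{def:leftcenter}) and translate the internal left center condition into the external condition \eqref{cent-cond1} on the underlying $\cB$-morphism. By definition, $C^l(\rR_\cB(A))$ is the terminal object among morphisms $\gamma\colon (C, a_C, \delta_C) \to (H\otimes A, a^{\rR}, \delta^{\rR})$ in $\lYD{H}(\cB)$ satisfying
\[
m_{\rR_\cB(A)}\circ(\gamma \otimes \ide_{H\otimes A}) = m_{\rR_\cB(A)}\circ(\ide_{H\otimes A} \otimes \gamma)\circ \Psi^{\bf YD}_{C,\,H\otimes A},
\]
with $m_{\rR_\cB(A)}$ from Corollary \ref{cor:RBA} and $\Psi^{\bf YD}$ given by \eqref{eq:YDbraid}.

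First I would render this condition graphically. The $H$-colinearity of $\gamma$, namely $(\Delta_H \otimes \ide_A)\gamma = (\ide_H \otimes \gamma)\delta_C$, permits elimination of the coaction $\delta_C$ appearing inside $\Psi^{\bf YD}_{C,\,H\otimes A}$, replacing it by $\Delta_H$ applied to the $H$-component of $\gamma$. Next, the $H$-linearity of $\gamma$, combined with the explicit form \eqref{eq:aRdR} of $a^{\rR}$, causes the antipode $S$ occurring inside $a^{\rR}$ to cancel against its adjacent multiplication strand via the antipode axioms, so that only structure intrinsic to $A$, $H$, and $\gamma$ survives. After these reductions, both sides of the left center equation collapse to the two sides of \eqref{cent-cond1}, read as morphisms $C \otimes A \to H \otimes A$ in $\cB$. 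For terminality to transfer between the two categories, one checks that \eqref{cent-cond1} on $\gamma\colon C\to H\otimes A$ in $\cB$ forces the image of $\gamma$ to lie inside the terminal sub-Yetter--Drinfeld module of $H\otimes A$ satisfying the left-center condition; combining with terminality inside $\lYD{H}(\cB)$ then yields the required unique factorization through $C^l(\rR_\cB(A))$ in $\cB$. The equivalence of \eqref{cent-cond1} and \eqref{cent-cond2} follows by running the same unpacking but with the inverse Yetter--Drinfeld braiding \eqref{eq:YDbraidinv} (which contains $S^{-1}$ and $\Psi^{-1}$) in place of $\Psi^{\bf YD}$, using the fact recalled in Section \ref{sec:notation} that in a braided category $m\Psi = m$ is equivalent to $m\Psi^{-1} = m$.

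The main obstacle will be the graphical simplification producing \eqref{cent-cond1}. Since $\cB$ is not assumed symmetric, the relative order of every braiding crossing must be tracked carefully, and the successive applications of naturality of $\Psi$, antipode cancellations, and the bialgebra condition must be arranged in the correct sequence to collapse the composite built from $a^{\rR}$, $m_{\rR_\cB(A)}$, and $\Psi^{\bf YD}$ into the compact form \eqref{cent-cond1}; the analogous calculation for \eqref{cent-cond2}, although formally parallel, requires symmetric care because of the asymmetry of the braiding and its inverse.
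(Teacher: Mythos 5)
Your route is the same as the paper's: unpack Definition~\ref{def:leftcenter} for $\rR_\cB(A)$ with the tensor-product multiplication of Corollary~\ref{cor:RBA}, use $H$-colinearity of $\gamma$ (i.e., naturality of the Yetter--Drinfeld braiding in its first argument) to replace $\delta_C$ by $\Delta_H$ applied to the $H$-output of $\gamma$, simplify graphically, and rerun the computation with the inverse braiding \eqref{eq:YDbraidinv} to reach \eqref{cent-cond2}. However, one step of your sketch is wrong as stated: the antipode cancellation is \emph{not} driven by $H$-linearity of $\gamma$. After the colinearity substitution, the action of $H$ on $C$ appears nowhere in the equation, so there is nothing to which $H$-linearity of $\gamma$ could be applied; moreover, it had better not be needed, since \eqref{cent-cond1} and \eqref{cent-cond2} must make sense for --- and the terminality claim quantifies over --- arbitrary morphisms $\gamma$ in $\cB$, with no $H$-module structure assumed on $C$. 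What actually removes the antipode is the composite $m_{\rR_\cB(A)}\circ\Psi^{\mathbf{YD}}$ itself: the $S$-strand inside $a^{\rR}$ (see \eqref{eq:aRdR}) is multiplied, by the $m_H$ factor of $m_{\rR_\cB(A)}$, against the adjacent leg of the coproduct produced by $\delta^{\rR}\gamma$, and the antipode axiom together with coassociativity collapses this pair to a counit. This is precisely the ``properties of the (inverse) antipode'' step in the paper's graphical computation.

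A second, smaller gap: the left-center equation is an identity of morphisms $C\otimes H\otimes A\to H\otimes A$, whereas \eqref{cent-cond1} is an identity of morphisms $C\otimes A\to H\otimes A$, so the former does not simply ``collapse'' to the latter. One direction is obtained by pre-composing with $\ide_C\otimes u_H\otimes\ide_A$; for the converse one must observe that on both sides of the full identity the $H$-strand of the second argument is multiplied last, so the full identity follows from the restricted one. The paper makes this restriction/extension argument explicit, and your proof needs it too. Your remaining points are sound: passing between the $\Psi^{\mathbf{YD}}$-form and the $(\Psi^{\mathbf{YD}})^{-1}$-form is indeed the braided analogue of $m\Psi=m\Leftrightarrow m\Psi^{-1}=m$ (note $\gamma$ must switch sides of the multiplication, which is exactly why \eqref{cent-cond1} has $\gamma$ on the left and \eqref{cent-cond2} on the right), and your closing remark on transferring terminality through the image of $\gamma$ addresses a point the paper leaves implicit.
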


We will only need \eqref{cent-cond2} for the proof of Theorem~\ref{cent-thm}, and using graphical calculus \eqref{cent-cond2} is

\begin{align}\label{commutation-cond}
\vcenter{\hbox{
\begingroup%
  \makeatletter%
  \providecommand\color[2][]{%
    \errmessage{(Inkscape) Color is used for the text in Inkscape, but the package 'color.sty' is not loaded}%
    \renewcommand\color[2][]{}%
  }%
  \providecommand\transparent[1]{%
    \errmessage{(Inkscape) Transparency is used (non-zero) for the text in Inkscape, but the package 'transparent.sty' is not loaded}%
    \renewcommand\transparent[1]{}%
  }%
  \providecommand\rotatebox[2]{#2}%
  \newcommand*\fsize{\dimexpr\f@size pt\relax}%
  \newcommand*\lineheight[1]{\fontsize{\fsize}{#1\fsize}\selectfont}%
  \ifx\svgwidth\undefined%
    \setlength{\unitlength}{88.78140067bp}%
    \ifx\svgscale\undefined%
      \relax%
    \else%
      \setlength{\unitlength}{\unitlength * \real{\svgscale}}%
    \fi%
  \else%
    \setlength{\unitlength}{\svgwidth}%
  \fi%
  \global\let\svgwidth\undefined%
  \global\let\svgscale\undefined%
  \makeatother%
  \begin{picture}(1,0.84551917)%
    \lineheight{1}%
    \setlength\tabcolsep{0pt}%
    \put(0,0){\includegraphics[width=\unitlength,page=1]{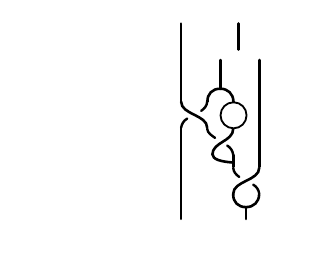}}%
    \put(0.71051687,0.44041743){\color[rgb]{0,0,0}\makebox(0,0)[lt]{\lineheight{0}\smash{\begin{tabular}[t]{l}$-$\end{tabular}}}}%
    \put(0.29283525,0.43114113){\color[rgb]{0,0,0}\makebox(0,0)[lt]{\lineheight{1.25}\smash{\begin{tabular}[t]{l}$=$\end{tabular}}}}%
    \put(0,0){\includegraphics[width=\unitlength,page=2]{leftcenter2.pdf}}%
    \put(0.73497768,0.61265667){\color[rgb]{0,0,0}\makebox(0,0)[lt]{\lineheight{0}\smash{\begin{tabular}[t]{l}$\gm$\end{tabular}}}}%
    \put(0.54536169,0.81128832){\color[rgb]{0,0,0}\makebox(0,0)[lt]{\lineheight{1.25}\smash{\begin{tabular}[t]{l}$A$\end{tabular}}}}%
    \put(0.72276371,0.81128832){\color[rgb]{0,0,0}\makebox(0,0)[lt]{\lineheight{1.25}\smash{\begin{tabular}[t]{l}$C$\end{tabular}}}}%
    \put(0.54536169,0.00875536){\color[rgb]{0,0,0}\makebox(0,0)[lt]{\lineheight{1.25}\smash{\begin{tabular}[t]{l}$H$\end{tabular}}}}%
    \put(0.75655457,0.00875536){\color[rgb]{0,0,0}\makebox(0,0)[lt]{\lineheight{1.25}\smash{\begin{tabular}[t]{l}$A$\end{tabular}}}}%
    \put(0,0){\includegraphics[width=\unitlength,page=3]{leftcenter2.pdf}}%
    \put(0.12558954,0.60838976){\color[rgb]{0,0,0}\makebox(0,0)[lt]{\lineheight{0}\smash{\begin{tabular}[t]{l}$\gm$\end{tabular}}}}%
    \put(-0.00373987,0.80915499){\color[rgb]{0,0,0}\makebox(0,0)[lt]{\lineheight{1.25}\smash{\begin{tabular}[t]{l}$A$\end{tabular}}}}%
    \put(0.12297592,0.80915499){\color[rgb]{0,0,0}\makebox(0,0)[lt]{\lineheight{1.25}\smash{\begin{tabular}[t]{l}$C$\end{tabular}}}}%
    \put(-0.00373987,0.00662178){\color[rgb]{0,0,0}\makebox(0,0)[lt]{\lineheight{1.25}\smash{\begin{tabular}[t]{l}$H$\end{tabular}}}}%
    \put(0.12297592,0.00662178){\color[rgb]{0,0,0}\makebox(0,0)[lt]{\lineheight{1.25}\smash{\begin{tabular}[t]{l}$A$\end{tabular}}}}%
  \end{picture}%
\endgroup%
}}\!\!. \end{align}

\begin{proof}[Proof of Lemma~\ref{cent-lemma}]
The left center $C^l(\rR_\cB(A))$ is the terminal object in the category of morphisms $\gamma\colon C\to H\otimes A$ in $\cB$ such that Diagram~\ref{Cleftdiag} commutes for $m= m_{\rR_\cB(A)}$. Since the multiplication on $\rR_\cB(A)$ is the tensor product multiplication on $H\otimes A$ [Corollary \ref{cor:RBA}], we have that 
Diagram~\ref{Cleftdiag} is equivalent to the first equality in 
\begin{equation*} \label{eq:cent-lem1}
m_{H\otimes A}(\gamma\otimes \ide_{H \otimes A}) ~=~ m_{H\otimes A}(\ide_{H \otimes A} \otimes \gamma)\Psi^{\mathrm{\bf YD}}_{C,H\otimes A} ~=~ 
m_{H\otimes A}\Psi^{\mathrm{\bf YD}}_{H\otimes A,H\otimes A}(\gamma\otimes \ide_{H \otimes A}).
\end{equation*}
The second equality uses the naturality of the Yetter--Drinfeld braiding applied to~$\gamma$. Pre-composition with the inverse Yetter--Drinfeld braiding gives the equivalent condition
\begin{align}\label{eq:cent-lem2}
m_{H\otimes A}(\ide_{H\otimes A}\otimes \gamma)&~=~ m_{H\otimes A}\left(\Psi^{\mathrm{\bf YD}}_{\rR_\cB(A),\rR_\cB(A)}\right)^{-1}(\ide_{H\otimes A}\otimes \gamma).
\end{align}

\noindent Recalling the Yetter--Drinfeld structure $a^{\rR}$, $\delta^{\rR}$ from Theorem \ref{prop:RB-YD}, we have by Equation \ref{eq:YDbraidinv} that
\begin{align*}
\left(\Psi^{\mathrm{\bf YD}}_{\rR_\cB(A),\rR_\cB(A)}\right)^{-1}=(\ide_{H\otimes A}\otimes (a^{\rR}\Psi_{H\otimes A,H}^{-1}))(\Psi^{-1}_{H\otimes A,H\otimes A}\otimes S^{-1})(\ide_{H\otimes A}\otimes (\Psi^{-1}_{H,H\otimes A}\delta^{\rR})).
\end{align*}

\noindent Therefore, using properties of the inverse antipode, we see that

\begin{align*}
m_{H\otimes A}\left(\Psi^{\mathrm{\bf YD}}_{\rR_\cB(A),\rR_\cB(A)}\right)^{-1}&=\vcenter{\hbox{\import{Graphics/}{leftcenter6.pdf_tex}}}.
\end{align*}
Thus, \eqref{eq:cent-lem2} is equivalent to 
\begin{align}\label{eq:cent-lem3}\begin{split}
m_{H\otimes A}(\ide_{H\otimes A}\otimes \gamma)&=(m_H\otimes m_A\Psi^{-1}_{A,A})(\ide_{H\otimes H}\otimes a_A\Psi_{A,H}^{-1}\otimes \ide_A)(\ide_H\otimes \Psi_{A,H}\otimes S^{-1}\otimes \ide_A)\\
&\quad \circ(\ide_{H\otimes A}\otimes \Delta\otimes \ide_A)(\ide_{H\otimes A}\otimes \gamma)
\end{split}
\end{align}
This condition is equivalent to \eqref{cent-cond2}:  
Indeed, pre-composing with $u_H\otimes \ide_A\otimes \ide_C$ yields  \eqref{cent-cond2}. Conversely, if \eqref{cent-cond2} holds, then  \eqref{eq:cent-lem3} follows  as the left copy of $H$ is multiplied last on the left in both sides of the equation.

Equation~\ref{cent-cond1} is shown to be equivalent to Equation \ref{eq:cent-lem2} using a similar argument.
\end{proof}

\medskip

\begin{proof}[Proof of Theorem \ref{cent-thm}]
The theorem will be proved via Theorem~\ref{thm:Bcenter} by showing that $C^l(\rR_\cB(A))$ is isomorphic to the  $\Psi^{-1}$-opposite algebra of the centralizer of $A$ inside of $A\rtimes H$. To start, recall Definition~\ref{def:scriptC}.

First, we show $\varphi$ from Definition \ref{phi-def} induces a $\Psi^{-1}$-opposite equivalence of monoidal categories 
$$\Gamma \colon \mathcal{C}^l(\rR_\cB(A))\rightarrow\mathcal{C}^l_{A\rtimes H}(A), \qquad  \Gamma(C,\gamma)=(C,\varphi\gamma), \qquad \Gamma(f)=f,$$ 
for all objects $(C, \gamma)$ and morphisms $f$ of $\mathcal{C}^l(\rR_\cB(A))$.
If $(C,\gamma)$ satisfies \eqref{commutation-cond}, then $\Gamma(C,\gamma)$ satisfies 
\begin{align}\label{eq4.9}
m_{A\rtimes H}(\ide_A\otimes \;u_H \otimes \varphi \gamma)&=m_{A\rtimes H}\Psi^{-1}_{A\rtimes H,A\rtimes H}(\ide_A\otimes \; u_H \otimes \varphi \gamma).
\end{align}
This follows from the following series of equalities of morphisms from $A\otimes C$ to $A\rtimes H$, which we display using graphical calculus:
\begin{align}\label{calc-center}
\vcenter{\hbox{\import{Graphics/}{centralizerproof.pdf_tex}}}.
\end{align}
Here, the first equality uses that $\Delta$ preserves the unit, which acts by the identity. The second equality uses \eqref{commutation-cond}, while the third equality uses naturality of the braiding. Finally, the last equality uses the unit axioms.
Pre-composing with $\Psi_{A \rtimes H, A \rtimes H}$, we see that Equation~\ref{eq4.9} is equivalent to 
\begin{align}\label{eq:center2}
m_{A\rtimes H}(\varphi \gamma \otimes \ide_A\otimes \; u_H)=m_{A\rtimes H}\Psi_{A\rtimes H,A\rtimes H}(\varphi \gamma\otimes \ide_A\otimes \; u_H).
\end{align}
This is Diagram~\ref{diag:leftcentzer} for the left centralizer of $A$ inside of $A\rtimes H$.
Hence, $\Gamma(C,\gamma)$ is an object in $\mathcal{C}^l_{A\rtimes H}(A)$. Conversely, if $(C',\zeta)$ is an object in $\mathcal{C}^l_{A\rtimes H}(A)$, then $(C',\varphi^{-1}\zeta)$ defines an object in $\mathcal{C}^l(\rR_\cB(A))$ such that $\Gamma(C',\varphi^{-1}\zeta)=(C',\zeta)$. This holds because after applying $\varphi^{-1}$ to the computation in \eqref{calc-center}, we recover \eqref{commutation-cond}. Hence $\Gamma$ is essentially surjective. As $\Gamma$ is the identity on morphisms, it is fully faithful and therefore an equivalence of categories by Lemma~\ref{cent-lemma}.

Next, we show that $\Gamma$ is a $\Psi^{-1}$-opposite monoidal functor. Indeed, given two objects $(C,\gamma)$ and $(C',\gamma')$ we have the following string of equalities of morphisms from $C\otimes C'$ to $A\rtimes H$:
\begin{align*}
\vcenter{\hbox{\import{Graphics/}{centralizerproof2.pdf_tex}}}.
\end{align*}
The first equality holds by \eqref{commutation-cond}. The second and fourth equality use naturality of the braiding, while the third equality uses the antipode axioms.
This calculation gives that 
$$\Psi^{-1}\colon \Gamma(C\otimes C')\rightarrow \Gamma(C')
\otimes \Gamma(C)$$
is an isomorphism in $\mathcal{C}^l_{A\rtimes H}(A)$.

Finally, the equivalence $\Gamma$ sends terminal objects to terminal objects. 
Using the discussion from Section \ref{sec:terminal}, this implies that there is an isomorphism of algebras 
\begin{equation} \label{eq:phiC}
\tilde{\varphi}\colon C^l(\rR_\cB(A))\to \Cent_{A\rtimes H}^l(A)^{\Psi^{-1}}
\end{equation}
such that the diagram 
\begin{align}\label{diag:phigamma}
\xymatrix{
C^l(\rR_\cB(A))\ar[rr]^{\tilde{\varphi}}\ar[d]_{\gamma_{\rR_\cB(A)}}&& \Cent_{A\rtimes H}^l(A)\ar[d]^{\gamma_{\text{Cent}_{A\rtimes H}^l(A)}}\\
\rR_\cB(A)\ar[rr]^{\varphi}&&A\rtimes H
}
\end{align}
of morphisms in $\cB$ commutes.
\end{proof}

\subsection{Consequences} \label{sec:cent-conseq}

\begin{corollary}\label{cor:YDalgebras}
The $H$-Yetter--Drinfeld structure on $A \rtimes H$ from Lemma \ref{AH-YD} 
makes $\Cent_{A\rtimes H}^l(A)^{\Psi^{-1}}$ an $H$-Yetter--Drinfeld module algebra such that $\tilde{\varphi}$ from \eqref{eq:phiC} is an isomorphism of commutative algebras in $\lYD{H}(\cB)$.
\end{corollary}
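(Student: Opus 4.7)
The strategy is to transport both the algebra and Yetter--Drinfeld module structure of $C^l(\rR_\cB(A))$ across the isomorphism $\tilde{\varphi}$ of \eqref{eq:phiC}, and then identify the result with the sub-YD-module of $A \rtimes H$ obtained by restricting the YD structure from Lemma~\ref{AH-YD}.

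First, I would recall from Theorem~\ref{thm:Bcenter} and Proposition~\ref{prop:ZBA} that $C^l(\rR_\cB(A)) \cong Z_\cB(A)$ is a commutative algebra in $\lYD{H}(\cB)$, and that the terminal morphism $\gamma_{\rR_\cB(A)}$ is a monomorphism in $\lYD{H}(\cB)$ by the ``maximal subobject'' reformulation given in Definition~\ref{def:leftcenter}. Since $\varphi\colon \rR_\cB(A) \to A \rtimes H$ is by construction an isomorphism in $\lYD{H}(\cB)$ (Lemma~\ref{AH-YD}), the composition $\varphi \circ \gamma_{\rR_\cB(A)}$ is a monomorphism in $\lYD{H}(\cB)$, and diagram~\eqref{diag:phigamma} identifies it with $\gamma_{\Cent^l_{A\rtimes H}(A)} \circ \tilde{\varphi}$. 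Transporting along the bijection $\tilde{\varphi}$ thus endows $\Cent^l_{A\rtimes H}(A)$ with a sub-YD-module structure with respect to which $\tilde{\varphi}$ is a YD isomorphism. Combined with the algebra isomorphism provided by Theorem~\ref{cent-thm}, and noting that commutativity in $\lYD{H}(\cB)$ is invariant under transport along an algebra-and-YD-iso, this forces $\tilde{\varphi}$ to be an isomorphism of commutative algebras in $\lYD{H}(\cB)$, and $\Cent^l_{A\rtimes H}(A)^{\Psi^{-1}}$ to inherit a commutative algebra structure in $\lYD{H}(\cB)$. The interplay with $\Psi^{-1}$ is controlled by the $\Psi^{-1}$-opposite monoidal equivalence $\Gamma$ from the proof of Theorem~\ref{cent-thm}.

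The main obstacle will be verifying that the YD structure obtained by transport along $\tilde{\varphi}$ truly coincides with the restriction of the YD structure on $A \rtimes H$ to the centralizer subobject. This reduces to showing that $\gamma_{\Cent^l_{A\rtimes H}(A)}$ is a monomorphism, after which the uniqueness of YD lifts of a subobject forces the two structures to agree. Such monomorphicity follows from a ``maximal subobject'' formulation analogous to Definition~\ref{def:leftcenter} (cf.\ Example~\ref{ex:CentlA}). Alternatively, one may verify directly, via the graphical calculus and the centralizer condition \eqref{eq:center2}, that the action $a^{\rtimes}$ and coaction $\delta^{\rtimes}$ of Lemma~\ref{AH-YD} preserve the subobject $\Cent^l_{A\rtimes H}(A)\subset A\rtimes H$.
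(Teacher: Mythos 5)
Your proposal is correct and follows essentially the same route as the paper: transport the algebra and Yetter--Drinfeld structure of $C^l(\rR_\cB(A))$ across $\tilde{\varphi}$, then use diagram~\eqref{diag:phigamma}, the fact that $C^l(\rR_\cB(A))$ is a YD subalgebra of $\rR_\cB(A)$, and that $\varphi$ is a YD isomorphism (Lemma~\ref{AH-YD}) to conclude that the inclusion $\gamma\colon \Cent^l_{A\rtimes H}(A)\to A\rtimes H$ is a morphism in $\lYD{H}(\cB)$. The paper carries out this last compatibility as an explicit chase, $(\ide_H\otimes \gamma)\delta^{C_2}=\delta^{\rtimes}\gamma$, rather than via your monomorphism-plus-uniqueness-of-lifts packaging, but these are the same argument; your ``main obstacle'' paragraph is already resolved by your second paragraph.
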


\begin{proof}
Denote $C_1:=C^l(\rR_\cB(A))$, with multiplication $m_1$. By Definition \ref{def:leftcenter}, $C_1$ is a subalgebra of $\rR_\cB(A)$ in $\lYD{H}(\cB)$. We denote its $H$-action and coaction by $a^{C_1},\delta^{C_1}$. Using the isomorphism $\tilde{\varphi}$ from Theorem \ref{cent-thm}, we define an $H$-action $a^{C_2}$ and an $H$-coaction $\delta^{C_2}$  on $C_2:=\Cent_{A\rtimes H}^l(A)$ by
$$a^{C_2}:=\tilde{\varphi}a^{C_1}(\ide_H\otimes \tilde{\varphi}^{-1}), \qquad \delta^{C_2}:=(\ide_H\otimes \tilde{\varphi})\delta^{C_1}\tilde{\varphi}^{-1}.$$
Now, $C_2\in \Alg(\lYD{H}(\cB))$ and $\tilde{\varphi}$ is a morphism of algebras in $\lYD{H}(\cB)$.
Further, $\gamma\colon C_2\to A\rtimes H$ becomes a morphism of Yetter--Drinfeld modules. The compatibility with $H$-coactions follows from
\begin{align*} 
(\ide_H\otimes \gamma)\delta^{C_2}=(\ide_H\otimes \gamma\tilde{\varphi})\delta^{C_1}\tilde{\varphi}^{-1}\stackrel{\text{\eqref{diag:phigamma}}}{=}&(\ide_H\otimes \varphi\gamma)\delta^{C_1}\tilde{\varphi}^{-1}=(\ide_H\otimes \varphi)\delta^{\rR}\gamma\tilde{\varphi}^{-1}\stackrel{\text{[Lemma~\ref{AH-YD}]}}{=}\delta^{\rtimes}\varphi\gamma\tilde{\varphi}^{-1}\\\stackrel{\text{\eqref{diag:phigamma}}}{=}&\delta^{\rtimes}\gamma\tilde{\varphi}\tilde{\varphi}^{-1}=\delta^{\rtimes}\gamma,
\end{align*}
and the compatibility with the $H$-actions is proved similarly.
\end{proof}

The result below also follows from Theorem~\ref{cent-thm}, cf. Equation \ref{eq:center2}.

\begin{corollary}
Assume that $\cB=\lmod{K}$ for $K$  a quasi-triangular Hopf algebra with braiding~$\Psi$. Then,  $Z_\cB(A)$ is the subalgebra $\Cent^l_{A\rtimes H}(A)^{\Psi^{-1}}$ of $(A\rtimes H)^{\Psi^{-1}}$, which is a $K$-module algebra, given~by 
\begin{align*} \textstyle
\left\lbrace \sum_i a_i\otimes h_i~~\middle|~~ m_{A\rtimes H}\Psi_{A\rtimes H,A\rtimes H}\left(\sum_i a_i\otimes h_i\otimes b \otimes 1_H\right)=m_{A\rtimes H} \left(\sum_i a_i\otimes h_i\otimes b \otimes 1_H\right), \quad \forall b\in A\right\rbrace.
\end{align*}
\end{corollary}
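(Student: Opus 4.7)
The plan is to derive this corollary as a direct specialization of Theorem~\ref{cent-thm} combined with the concrete description of left centralizers provided in Example~\ref{ex:CentlA}. First I would invoke Theorem~\ref{cent-thm} to identify $Z_\cB(A)$ with $\Cent^l_{A \rtimes H}(A)^{\Psi^{-1}}$ as algebras in $\cB$. Because $\cB = \lmod{K}$ is equipped with the forgetful fiber functor to $\Vect$, its objects have underlying vector space structure and an element-wise calculus becomes available, which is precisely what puts us in the hypotheses of Example~\ref{ex:CentlA}.

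Next, I would unpack $\Cent^l_{A\rtimes H}(A)$ using Example~\ref{ex:CentlA} applied to the algebra $A \rtimes H$ (which is a $K$-module algebra by Definition~\ref{def:smash}, since $H$ is a braided Hopf algebra in $\cB=\lmod{K}$ and $A\in\Alg(\lmod{H}(\cB))$). The relevant subobject is the image of the canonical embedding $\iota \colon A \hookrightarrow A \rtimes H$, $a \mapsto a \otimes 1_H$. One verifies in passing that $\iota$ is a morphism in $\cB$, since the unit $u_H\colon I \to H$ is $K$-linear by the Hopf algebra axioms in $\cB$. Substituting $s = b \otimes 1_H = \iota(b)$ into the centralizer condition of Example~\ref{ex:CentlA} then yields exactly the displayed equality for every $b \in A$.

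Finally, I would note that passing from $\Cent^l_{A \rtimes H}(A)$ to its $\Psi^{-1}$-opposite algebra leaves the underlying set of elements untouched and only alters the multiplication, so the element-wise description carries over verbatim. The $K$-module algebra structure on $Z_\cB(A)$ is then inherited from $(A \rtimes H)^{\Psi^{-1}}$ as a $\cB$-subobject, with $K$-linearity of the opposite multiplication following from the $K$-linearity (i.e.\ naturality) of the braiding $\Psi$ on $\cB$. The main (and really only) conceptual point is to ensure the translation between the diagrammatic left centralizer condition of Definition~\ref{def:leftcentzer} and the element-wise condition of Example~\ref{ex:CentlA}, which then makes this corollary an immediate consequence of Theorem~\ref{cent-thm}.
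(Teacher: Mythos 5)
Your proposal is correct and follows essentially the same route as the paper, which derives the corollary directly from Theorem~\ref{cent-thm} (citing the centralizer condition in Equation~\ref{eq:center2}, i.e.\ the diagram of Definition~\ref{def:leftcentzer} for the embedding $a \mapsto a \otimes 1_H$) and reads it element-wise exactly as in Example~\ref{ex:CentlA}, since $\cB = \lmod{K}$ has a fiber functor. Your additional remarks---that the $\Psi^{-1}$-opposite only changes the multiplication and not the underlying $K$-submodule, and that the $K$-module algebra structure is inherited because all maps involved are morphisms in $\cB$---are accurate and just make explicit what the paper leaves implicit.
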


\smallskip

\begin{example}\label{expl:H-center}
Continuing Example \ref{adjoint-expl}, take $A=I_\cB$ the tensor unit of $\cB$ which is an algebra in $\lmod{H}(\cB)$. Then $Z_\cB(I)$ consists of all elements in $H$ that satisfy Equation \ref{commutation-cond} with $A = I_\cB$. 
This implies that $Z_\cB(I)=H$, which  provides a braided version of the fact that $H$ is a commutative algebra  in $\lYD{H}(\cB)$ when using the adjoint action. See, e.g., \cite{CFM}*{page~1332} when $\cB = \Vect$.
\end{example}

\begin{remark} \label{rem:cent} 
Recall that for an algebra $A$ in $\Vect$, the center $Z(A)$ of $A$ serves as a Morita invariant. Moreover, for an algebra $A$ in a ($\Vect$-central) monoidal category $\cC$, Davydov's full center $Z(A)$ of $A$ serves as a Morita invariant \cite{Dav1}*{Theorem~6.2, Corollary~6.3}. In particular, when $\cC = \lmod{H} = \lmod{H}(\Vect)$, for some Hopf algebra $H$ in $\Vect$, we have that $Z(A)$  is the centralizer algebra $\Cent_{A \rtimes H}(A)^{\oop}$ in $\cZ(\cC)$; see, e.g., \cite{Dav2}*{Theorem~5.3}. \footnote{After checking the results of \cite{Dav2}*{Theorem~5.3}, we added ${}^{\oop}$ to $\Cent_{A \rtimes H}(A)$.}

The main results of this manuscript, Theorems~\ref{thm:Morita} and~\ref{cent-thm}, give  generalizations of the results above. Namely, for an algebra $A$ in a $\cB$-central monoidal category $\cC$, the $\cB$-center $Z_\cB(A)$ of $A$ serves as a Morita invariant, and in the case when $\cC = \lmod{H}(\cB)$, for some Hopf algebra $H$ in $\cB$, we have that $Z_\cB(A)$  is the braided centralizer algebra $\Cent^l_{A \rtimes H}(A)^{\Psi^{-1}}$ in $\cZ_\cB(\cC)$.
\end{remark}


\medskip

\section{Braided Drinfeld doubles and Heisenberg doubles}\label{sec:doubles}

Towards obtaining concrete examples of the results in the previous sections, we discuss here braided versions of useful algebraic constructions: the Drinfeld double and the Heisenberg double. Here, we restrict our attention to the case when $\cC = \lmod{H}(\cB)$ as in Example~\ref{ex:augment}(3). Consider the following notation that we will use below and in the following sections.

\begin{notation} \label{not:Sweedler} Here, $\cB = \lmod{K}$ for $K$ a quasi-triangular Hopf algebra over $\Bbbk$  with $\cal{R}$-matrix and its inverse denoted $R^{(1)} \otimes R^{(2)}$ and $R^{(-1)} \otimes R^{(-2)}$, respectively. The braiding $\Psi_{V,W}$ for objects $V,W$ in $\cB$ is given by
$$\Psi_{V,W}(v \otimes w) = (R^{(2)} \cdot w) \otimes (R^{(1)} \cdot v), \quad \forall v\in V,w\in W.$$
Take $H$ to be a Hopf algebra in $\cB$. We use sumless Sweedler notation
$\Delta(b):= b_{(1)}  \otimes  b_{(2)}$ and $\Delta(d):= d_{(1)} \otimes d_{(2)}$ for $b \in H$, $d \in K$, and the coaction $\delta \colon V \to H \otimes V$  for $V$ in $\lYD{H}(\cB)$ is denoted by $\delta(v)= v_{(-1)} \otimes v_{(0)}$.
\end{notation}

\smallskip

We now recall material about {\it braided Drinfeld doubles} from \cite{L17}; this construction is due to  \cite{Maj99} where it is called {\it double bosonization}.

\begin{definition} \label{def:DrinKH} \cite{L17}*{Definition~3.5} 
Take a Hopf algebra $H$ in $\lmod{K}$, along with dual Hopf algebra $H^*$ in $\lmod{K}$, and a nondegenerate Hopf algebra pairing $$\langle \;, \rangle\colon  H^* \otimes H \to \Bbbk,$$
i.e. the left and right radical of $\langle \;, \rangle$ are trivial. Then, the 
{\it braided Drinfeld double $\textnormal{Drin}_K(H^*,H)$ of $H$ with respect to $K$ and $\langle \;, \rangle$} is defined to be the Hopf algebra over $\Bbbk$ that is $H^* \otimes K \otimes H$ as a $\Bbbk$-vector space, and for $b \in H$, $c \in H^*$, $d \in K$, has multiplication
$$ db = (d_{(1)} \cdot b)\; d_{(2)}, \quad \quad
dc = (d_{(1)} \cdot c)\; d_{(2)}$$
$$(R^{(-1)} \cdot b_{(2)}) \; (R^{(-2)} \cdot c_{(1)}) \; \langle c_{(2)},~b_{(1)} \rangle ~=~
R^{(-1)} \; c_{(2)} \; b_{(1)} \; R^{(2)} \; \langle R^{-(2)} \cdot c_{(1)},~R^{(1)} \cdot b_{(2)} \rangle, $$
with coproduct 
$$\Delta(d) = d_{(1)} \otimes d_{(2)}, \quad
\Delta(b) = b_{(1)}\; R^{(2)}  \otimes   (R^{(1)} \cdot b_{(2)}), 
\quad
\Delta(c) =  R^{-(1)} \; c_{(2)}  \otimes   (R^{-(2)} \cdot c_{(1)}),$$
with counit the same as on $H^*$, $K$, $H$ and extended multiplicatively, and with antipode
$$S(d) = S_K(d), \quad \quad S(b) = S_K(R^{(2)})\;(R^{(1)} \cdot S_H(b)), \quad \quad S(c) = S_K(R^{-(1)})\;(R^{-(2)} \cdot S^{-1}_{H^*}(c)).$$

\end{definition}

\begin{proposition} \label{prop:Phi} \cite{L17}*{Proposition~3.6} There is a fully faithful functor of monoidal categories:
$$\Phi\colon \lYD{H}(\lmod{K}) {\rightarrow} \lmod{\textnormal{Drin}_K(H^*,H)} $$
assigning $\Phi(V) = V$ with the same $K$- and $H$-action, and with $H^*$-action defined by 
$$c \cdot v = \langle c, v_{(-1)} \rangle \;v_{(0)},$$
for all $c \in H^*$ and $v \in V$. 

If $H$ is finite-dimensional, then $\Phi$ is an equivalence of braided monoidal categories.
\qed
\end{proposition}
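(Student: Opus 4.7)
The plan is to verify the assertions by a direct translation argument: the pairing $\langle \,,\, \rangle$ converts the $H$-coaction on a Yetter--Drinfeld module into an $H^*$-action (and vice versa, when $H$ is finite-dimensional), and under this dictionary the Yetter--Drinfeld compatibility \eqref{eq:HYDB} corresponds exactly to the cross-commutation relation between $H$ and $H^*$ in $\text{Drin}_K(H^*,H)$ from Definition~\ref{def:DrinKH}. All other defining relations of $\text{Drin}_K(H^*,H)$ reduce to $K$-equivariance statements that are built into the ambient category $\lmod{K}$.

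First, I would check that $\Phi$ is well-defined on objects. Given $(V, a^K_V, a^H_V, \delta_V) \in \lYD{H}(\lmod{K})$, the formula $c \cdot v := \langle c, v_{(-1)} \rangle v_{(0)}$ defines an $H^*$-action on $V$ because $\delta_V$ is coassociative and $\langle\,,\,\rangle$ is a Hopf pairing. Compatibility of the $H^*$-action with $K$ reduces to $K$-equivariance of $\delta_V$, and similarly for $H$. The nontrivial point, which I expect to be the main obstacle, is verifying the $H$--$H^*$ cross-relation: one must unpack the braided YD diagram using the explicit braiding $\Psi_{V,W}(v\otimes w) = (R^{(2)}\cdot w)\otimes(R^{(1)}\cdot v)$, apply $\langle c_{(1)}, -\rangle$ at the appropriate strand, and recognize the resulting identity as the double cross-relation $R^{(-1)} c_{(2)} b_{(1)} R^{(2)} \langle R^{-(2)}\cdot c_{(1)}, R^{(1)}\cdot b_{(2)}\rangle = (R^{(-1)}\cdot b_{(2)})(R^{(-2)}\cdot c_{(1)}) \langle c_{(2)}, b_{(1)}\rangle$ acting on $v$. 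This is a direct but diagrammatically delicate computation.

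Second, $\Phi$ acts as the identity on morphisms, so is automatically faithful. For fullness, any morphism $f$ in $\lmod{\text{Drin}_K(H^*,H)}$ satisfies $\langle c, f(v)_{(-1)}\rangle f(v)_{(0)} = \langle c, v_{(-1)}\rangle f(v_{(0)})$ for every $c \in H^*$, and nondegeneracy of $\langle\,,\,\rangle$ forces $f$ to be a comodule map, hence a YD-morphism. For the monoidal structure, I would compare tensor products: in $\lYD{H}(\lmod{K})$ the coaction on $V\otimes W$ is the braided coaction from Definition~\ref{def:HYD(B)}, while in $\lmod{\text{Drin}_K(H^*,H)}$ one uses $\Delta(c) = R^{-(1)} c_{(2)} \otimes (R^{-(2)}\cdot c_{(1)})$; pairing against elements of $H$ shows the two induced $H^*$-actions agree.

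Third, when $\dim H < \infty$, essential surjectivity follows by running the argument backwards: dual bases $\{e_i\}\subset H^*$, $\{f_i\}\subset H$ produce a coaction $\delta(v) := \sum_i f_i \otimes (e_i \cdot v)$, well-defined because $H$ is finite-dimensional, and the translation above yields the YD axiom. To upgrade to an equivalence of \emph{braided} monoidal categories, one uses that $\text{Drin}_K(H^*,H)$ carries a canonical quasi-triangular structure whose induced braiding on $\lmod{\text{Drin}_K(H^*,H)}$ matches the Yetter--Drinfeld braiding \eqref{eq:YDbraid} under $\Phi$; this final comparison of $\mathcal{R}$-matrices is a direct calculation using the formula $\Psi^{\mathrm{\bf YD}}_{V,W} = (a_W\otimes \ide_V)(\ide_H\otimes \Psi^{\cB}_{V,W})(\delta_V\otimes \ide_W)$.
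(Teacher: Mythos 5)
The paper gives no internal proof of this proposition: it is quoted verbatim from \cite{L17}*{Proposition~3.6} (hence the \qed), so there is nothing in this manuscript to compare line-by-line against. Your direct-verification outline is, however, essentially the standard argument underlying that reference, and it is sound: the relations of $\textnormal{Drin}_K(H^*,H)$ split into the internal relations of $H^*$, $K$, $H$, the two $K$-straightening relations (which, as you say, are exactly $K$-equivariance of the action and coaction, i.e.\ the statement that these are morphisms in $\lmod{K}$), and the $H$--$H^*$ cross-relation, which corresponds under the pairing to the braided Yetter--Drinfeld condition \eqref{eq:HYDB}; your fullness argument via triviality of the right radical of $\langle\,,\,\rangle$ works even for infinite-dimensional $H$, which is the case where fullness (rather than essential surjectivity) is the content; and your finite-dimensional step via dual bases and the canonical quasi-triangular structure on the double is the standard way to get the braided equivalence. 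Two points deserve explicit care if you write this up: first, implicit in ``module over the double = three compatible actions'' is the triangular (PBW-type) decomposition $\textnormal{Drin}_K(H^*,H)\cong H^*\otimes K\otimes H$ as a vector space, which is what lets you assemble the three actions into one; second, the cross-relation computation really does require tracking the $\cal{R}$-matrix insertions from the braiding $\Psi_{V,W}(v\otimes w)=(R^{(2)}\cdot w)\otimes(R^{(1)}\cdot v)$ at each crossing, and this bookkeeping is where all the difficulty of the proof in \cite{L17} lives --- your plan names it correctly but does not carry it out.
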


\begin{example}
If $H = \Bbbk$, then $\textnormal{Drin}_K(\Bbbk^*, \Bbbk) \cong K$ as $\Bbbk$-Hopf algebras. Moreover, $\lmod{\Bbbk}(\cB)$ is isomorphic to $\cB$ as braided monoidal categories. So, Propositions~\ref{prop:Phi} and~\ref{prop:YD} recover the fact that $\cZ_\cB(\cB)$ is isomorphic to $\cB$ as braided monoidal categories as stated in Proposition~\ref{prop:rel-center}(1). 

When $K=\Bbbk$, the braided Drinfeld double is the usual Drinfeld double or \emph{quantum double} as found, for example, in \cite{Majid}*{Theorem 7.1.1}.
\end{example}

We saw in the previous section that the $\cB$-center of $A \in \Alg(\lmod{H}(\cB))$ is isomorphic to the  braided centralizer algebra of $A$ in $A \rtimes H$. So by continuing Example~\ref{coreg-expl} and using the braided smash product algebras discussed in Section~\ref{sec:smash}, consider the following special subclass of such algebras $A \rtimes H$.

\begin{definition} \label{def:braidHeis} \cite{L17}*{Example~3.10}
For $H$ a Hopf algebra in a braided monoidal category $(\cB,\Psi)$, the braided smash product algebra $\rdual{H}^{\Psi^{-1}}\rtimes H$ is called the \emph{braided Heisenberg double} of $H$, and is denoted by $\Heis_\cB(H,\rdual{H})$.
\end{definition}

Thus, $\Heis_\cB(H,\rdual{H})$ is the $K$-module $\rdual{H}\otimes H$ with multiplication  $m_{\Heis}$ given by 
$$
m_{\Heis}(a\otimes g\otimes b\otimes h)~=~m_{\rdual{H}}(R^{(-1)}R'^{(2)}\cdot b_{(2)}\otimes R^{(-2)}\cdot a)\otimes m_H(R''^{(1)}R'^{(1)}\cdot g_{(2)}\otimes h)\langle g_{(1)},R''^{(2)}\cdot b_{(1)} \rangle,
$$

\noindent for $a,b\in \rdual{H}$ and $g,h\in H$. Here $\langle~,~\rangle\colon H\otimes \rdual{H}\to \Bbbk$ is a nondegenerate Hopf algebra pairing, and $\cal{R}'$, $\cal{R}''$ are copies of the universal $\cal{R}$-matrix of $K$.
If $H$ is a $\Bbbk$-Hopf algebra, then one recovers the Heisenberg double $\Heis(H)$ from \cite{Lu2}*{Definition~5.1}.

\smallskip

We end with a basic example of a (braided) Heisenberg double for $\cB = \Vect$ and compute an example of a $\Vect$- (or full-) center of an algebra in $\lmod{H}(\Vect)$ below.

\begin{example}\label{expl:classic}
Let $H=\Bbbk[x_1,\ldots, x_n]$ and write $\rdual{H}=\Bbbk[\partial_1,\ldots, \partial_n]$, with pairing $\langle x_i,\partial_j\rangle=\delta_{i,j}$. This gives dually paired Hopf algebras over $\Bbbk$, and $$\Heis_{\Vect}(H,\rdual{H}) = \frac{\Bbbk\langle x_1,\ldots,x_n,\partial_1,\ldots,\partial_n\rangle}{\left([x_i,\partial_j]-\delta_{i,j}, ~~[x_i,x_j], ~~[\partial_i,\partial_j]\right)} =: A_n(\Bbbk),$$
the \emph{$n$-th Weyl algebra} (also called \emph{Heisenberg algebra}). 
An elementary computation  shows that $$Z_{\Vect}(\rdual{H})=\Cent^l_{A_n(\Bbbk)}(\rdual{H})=\Bbbk[\partial_1,\ldots,\partial_n]=\rdual{H}.$$
\end{example}


\medskip

\section{Example: Module algebras over \texorpdfstring{$u_q(\mathfrak{sl}_2)$}{uq(sl2)}}\label{sec:uqsl2}

In this section we provide an extended example of the material in the previous sections for the representation category of the finite-dimensional small quantum group $u_q(\mathfrak{sl}_2)$. Recall Notation~\ref{not:Sweedler} and consider the following notation for the rest of this section.

\begin{notation} \label{not:uqsl2}
Let $n\geq 3$ be an integer , and let $q$ a root of unity so that $q^2$ has order $n$. 
\begin{itemize}[leftmargin=*]
\item From \cite{Majid}*{Lemma~2.1.2 and Example~2.1.6}, let $K$ be the quasi-triangular Hopf algebra $\Bbbk \mathbb{Z}_n$ where $\mathbb{Z}_n = \langle g ~|~ g^n =1 \rangle$, with $\cal{R}$-matrix and inverse given by $$\cal{R} = \frac{1}{n} \sum_{i,j = 0}^{n-1} q^{-2ij}g^i \otimes g^j \quad \text{and} \quad \cal{R}^{-1} = \frac{1}{n} \sum_{i,j = 0}^{n-1} q^{-2ij}g^{-i} \otimes g^j.$$

\item Let $\cB$ be the braided monoidal category $\lmod{K}$, which then, for $g \cdot v = q^{2|v|}v$ with $v$ belonging to an object in $\lmod{K}$, has braiding and inverse braiding $$\quad \Psi(v \otimes w) = \frac{1}{n} \sum_{i,j = 0}^{n-1} q^{-2ij}(g^j \cdot w) \otimes (g^i \cdot v) = q^{2|v||w|}w \otimes v
\quad \text{and} \quad \Psi^{-1}(v \otimes w) = q^{-2|v||w|}w \otimes v.$$

\item Take $H$ to be the Hopf algebra $\Bbbk[x]/(x^n)$ in $\cB$, where $$\Delta(x^m) = \sum_{i=0}^m ~{m\choose i}_{q^2} x^i \otimes x^{m-i} \quad \text{ for } \quad {m\choose i}_{q^2} = \prod_{j=0}^{i-1} \frac{1-q^{2(i-j)}}{1-q^{2(j+1)}},$$
along with $\varepsilon(x^m) = \delta_{m,0}$ and $S(x^m) = (-1)^m q^{2{m \choose 2}}x^m$. Here, the $K$-action and the induced $K$-coaction on $H$ are given by 
$$g \cdot x = q^{-2} x \quad \quad  \text{and} \quad \quad  \delta(x) =  \sum_{i,j = 0}^{n-1} R^{(2)} \otimes (R^{(1)} \cdot x) = g^{-1} \otimes x.$$

\smallskip 

\item Next, take $\cC$ to be the monoidal category $\lmod{H}{(\cB)}$, which is equivalent to $\lmod{(H \rtimes K)}$. The smash product algebra $H \rtimes K$ is the \emph{Taft algebra} $T_n(q^{-2})$, i.e., the $\Bbbk$-Hopf algebra 
$$T_n(q^{-2}) =  \Bbbk \langle g,x \rangle / (g^n-1, ~x^n, ~gx - q^{-2} xg),$$
with $\Delta(g) = g \otimes g, ~\Delta(x) = g^{-1} \otimes x + x \otimes 1, ~\varepsilon(g) = 1, ~\varepsilon(x) =0, ~S(g) = g^{-1}, ~S(x) = -gx$. That is, $\cC$ is equivalent to $\lmod{T_n(q^{-2})}$ as a monoidal category.

\smallskip

\item Pick $A:= A_\gamma = \Bbbk[u] \in \Alg{(\cC)}$ with $$g \cdot u = q^{2}u \quad \text{ and } \quad x \cdot u = \gamma 1_A, \text{ for } \gamma  \in \Bbbk.$$
\end{itemize}
\end{notation}

\begin{remark} \label{rem:gens} We will show in Corollary~\ref{cor:MoritaSec6} below that $A_{\gamma =0}$ and $A_{\gamma \neq 0}$ are not Morita equivalent as algebras in $\cC$, and to do so we compute their respective $\cB$-centers and employ Theorem~\ref{thm:Morita}. Even though this computation is tedious, it is, in a sense, more efficient to use the $\cB$-center $Z_\cB(A) = C^l(\rR_\cB(A))$ [Theorem~\ref{thm:Bcenter}] as a Morita invariant rather than the full center $Z(A) = C^l(\rR(A))$ \cite{Dav1}*{Theorem~5.4}: Indeed, $\rR(A) = H \otimes K \otimes A$  as a $\Bbbk$-vector space \cite{Dav2}*{Proposition~5.1}, whereas $\rR_\cB(A) = H \otimes A$ as a $\Bbbk$-vector space [Theorem~\ref{prop:RB-YD}] and is a smaller algebra on which to do computations.
\end{remark}

Recall that the functor $\rR_\cB\colon \cC \to \cZ_{\cB}(\cC)$ exists in the setting above by Theorem~\ref{prop:RB-YD} and it is lax monoidal. We compute the algebra $\rR_\cB(A)$ below.

\begin{lemma} \label{lem:uqsl2RBA} Retain the notation above. 
Then the algebra $\rR_\cB(A)$ in $\lYD{H}(\cB)$  has the $\Bbbk$-algebra presentation
$$\Bbbk \langle {\bf y}  , {\bf u} \rangle/({\bf y}^n,~ {\bf u}{\bf y} - q^{-2}{\bf y}{\bf u}).$$ 
As an object in $\cB$, the $K$-action on $\rR_\cB(A)$ is given by 
$$a(g \otimes  {\bf y})  = q^{-2} {\bf y}  \quad \text{ and } \quad 
a(g \otimes {\bf u})  = q^{2} {\bf u}.$$ 
For the Yetter--Drinfeld structure, the $H$-action and $H$-coaction on $\rR_\cB(A)$ are given by
$$a^{\rR}(x \otimes {\bf y}) = (1-q^2) {\bf y}^2, 
\; \; a^{\rR}(x \otimes {\bf u}) = (1-q^{-4}) {\bf y}{\bf u} + \gamma {\bf 1}, 
\; \; \delta^{\rR}({\bf y}) = 1_H \otimes {\bf y} + {x} \otimes 1_A, 
\; \; \delta^{\rR}({\bf u}) = 1_H \otimes {\bf u}.$$
\end{lemma}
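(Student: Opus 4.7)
The strategy is to directly apply the formulas from Theorem~\ref{prop:RB-YD} and Corollary~\ref{cor:RBA} to the specialized setting, writing elements in terms of the natural generators $\mathbf{y} := x \otimes 1_A$ and $\mathbf{u} := 1_H \otimes u$ of $\rR_\cB(A) = H \otimes A$, and then reading off the four structure maps.

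First I would handle the algebra presentation. By Corollary~\ref{cor:RBA}, the product on $\rR_\cB(A) = H \otimes A$ is the tensor product algebra in $\cB$, so $m = (m_H \otimes m_A)(\ide_H \otimes \Psi_{A,H} \otimes \ide_A)$. Under the diagonal $K$-action, $|x| = -1$ and $|u| = 1$, so the $\cB$-braiding is $\Psi(v \otimes w) = q^{2|v||w|} w \otimes v$. A short calculation then gives $\mathbf{y}^i = x^i \otimes 1_A$ and $\mathbf{u}^j = 1_H \otimes u^j$, and
\[
\mathbf{u}\mathbf{y} = m\bigl((1\otimes u)\otimes (x\otimes 1)\bigr) = m_H\otimes m_A\bigl(1 \otimes q^{-2}(x\otimes u)\otimes 1\bigr) = q^{-2}\mathbf{y}\mathbf{u}.
\]
Together with $\mathbf{y}^n = x^n \otimes 1 = 0$ this shows there is a surjective $\Bbbk$-algebra morphism from $\Bbbk\langle \mathbf{y},\mathbf{u}\rangle/(\mathbf{y}^n, \mathbf{u}\mathbf{y} - q^{-2}\mathbf{y}\mathbf{u})$ onto $\rR_\cB(A)$. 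A standard PBW/Diamond Lemma argument shows that the ordered monomials $\{\mathbf{y}^i\mathbf{u}^j\}_{0\le i<n,\,j\ge 0}$ span the domain, and they are matched bijectively with the obvious basis $\{x^i \otimes u^j\}$ of the codomain, forcing the map to be an isomorphism.

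For the remaining structure I would proceed map by map. The $K$-action is tensor-product-diagonal (as $K$ is cocommutative), yielding $g \cdot \mathbf{y} = q^{-2}\mathbf{y}$ and $g \cdot \mathbf{u} = q^{2}\mathbf{u}$ immediately. The $H$-coaction $\delta^\rR = \Delta \otimes \ide_V$ is then a one-line verification: $\delta^\rR(\mathbf{y}) = \Delta(x) \otimes 1 = 1_H \otimes \mathbf{y} + x \otimes \mathbf{1}$ and $\delta^\rR(\mathbf{u}) = \Delta(1_H) \otimes u = 1_H \otimes \mathbf{u}$.

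The computation of $a^\rR$ on $x \otimes \mathbf{y}$ and $x \otimes \mathbf{u}$ is the main obstacle; here one must feed each input through the seven-step composition in Theorem~\ref{prop:RB-YD}, namely $(\Delta \otimes \ide)$, $(\ide \otimes \Psi_{H,H} \otimes \ide)$, $(m \otimes \Delta \otimes \ide)$, $(\ide_{H\otimes H} \otimes \Psi_{H,V})$, $(\ide_H \otimes a_V \otimes S)$, $(\ide_H \otimes \Psi_{V,H})$, and $(m \otimes \ide_V)$, keeping careful track of the $q$-powers generated by the braidings (with $|x|=-1$, $|u|=1$, $|1|=0$), of the antipode value $S(x) = -x$, and of the $H$-action on $A$ (where $x \cdot 1_A = 0$ while $x \cdot u = \gamma 1_A$). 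After collecting terms, the contributions combine to give $a^\rR(x \otimes \mathbf{y}) = (1-q^2) x^2 \otimes 1 = (1-q^2)\mathbf{y}^2$, and $a^\rR(x \otimes \mathbf{u}) = (1-q^{-4})(x \otimes u) + \gamma(1 \otimes 1) = (1-q^{-4})\mathbf{y}\mathbf{u} + \gamma \mathbf{1}$, matching the statement. Since $\mathbf{y}$ and $\mathbf{u}$ generate $\rR_\cB(A)$ as an algebra and $a^\rR$, $\delta^\rR$ are determined on products by Yetter--Drinfeld-module-algebra compatibility, this suffices.
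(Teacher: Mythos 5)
Your proposal is correct and follows essentially the same route as the paper's proof: define $\mathbf{y}$ and $\mathbf{u}$ as the natural generators of $H\otimes A$, read the relation $\mathbf{u}\mathbf{y}=q^{-2}\mathbf{y}\mathbf{u}$ off the tensor-product multiplication of Corollary~\ref{cor:RBA}, and obtain the $K$-action, $a^{\rR}$, and $\delta^{\rR}$ by direct substitution into the formulas of Theorem~\ref{prop:RB-YD} (the paper likewise writes out only one of the $a^{\rR}$ computations explicitly and leaves the rest to the reader). Your added PBW/Diamond Lemma remark justifying that the presentation has no further relations is a small refinement of the paper's brief ``its other relations come from $H$,'' but it does not change the method.
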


\begin{proof}
As an object in $\lYD{H}{(\cB)}$, we have that $\rR_\cB(A) = H \otimes A$. Here, we take $H$ to be $\Bbbk[y]/(y^n)$ and denote the generators of $H$ and of $A$ in $\rR_\cB(A)$ by $${\bf y}:=y \otimes 1_A \quad \text{and} \quad {\bf u}:=1_H \otimes  u,$$ respectively. Moreover, take ${\bf 1}:= 1_H  \otimes 1_A$. The multiplication of $\rR_\cB(A)$  from Corollary~\ref{cor:RBA} yields the relation ${\bf u}{\bf y} = q^{-2}{\bf y}{\bf u}$ in $\rR_\cB(A)$; its other relations come from~$H$.

\smallskip
The $K$-action $a$ on $\rR_\cB(A)$ is the $K$-action on $H$ and on $A$ induced by the set-up of Notation~\ref{not:uqsl2}.

\smallskip The Yetter--Drinfeld structure $(a^{\rR}, \delta^{\rR})$ on $\rR_\cB(A)$ is given in Theorem~\ref{prop:RB-YD}. We provide the details of one computation and leave the rest, including the verification of the Yetter--Drinfeld compatibility condition [Definition~\ref{def:HYD(B)}], to the reader: 
\begingroup
\allowdisplaybreaks
\begin{align*}
\vspace{.05in}
a^{\rR}(x \otimes 1_H \otimes u)&=
[(m_H \otimes \ide_A)(\ide_H \otimes \Psi_{A,H})(\ide_H \otimes \; a_A \otimes S_H)(\ide_{H \otimes H} \otimes \Psi_{H,A})\\
\vspace{.15in}
&\quad \circ(m_H \otimes \Delta_H \otimes \ide_A)(\ide_H \otimes \Psi_{H,H} \otimes \ide_A)(\Delta \otimes\ide_{H \otimes A})](y \; \otimes \; 1_H \; \otimes \; u)
\\
\vspace{.05in}
&=
[(m_H \otimes \ide_A)(\ide_H \otimes \Psi_{A,H})(\ide_H \otimes \; a_A \otimes S_H)(\ide_{H \otimes H} \otimes \Psi_{H,A})\\
\vspace{.15in}
&\quad \circ(m_H \otimes \Delta_H \otimes \ide_A)(\ide_H \otimes \Psi_{H,H} \otimes \ide_A)](1_H \otimes y \otimes 1_H \otimes u + y \otimes 1_H \otimes  1_H \otimes u)
\\
\vspace{.05in}
&=
[(m_H \otimes \ide_A)(\ide_H \otimes \Psi_{A,H})(\ide_H \otimes \; a_A \otimes S_H)(\ide_{H \otimes H} \otimes \Psi_{H,A})\\
\vspace{.15in}
&\quad \circ(m_H \otimes \Delta_H \otimes \ide_A)]
(1_H   \otimes  1_H  \otimes  y  \otimes  u + y   \otimes  1_H \otimes 1_H \otimes u)
\\
\vspace{.05in}
&=
[(m_H \otimes\ide_A)(\ide_H \otimes \Psi_{A,H})(\ide_H \otimes \; a_A \otimes S_H)(\ide_{H \otimes H} \otimes \Psi_{H,A})]\\
\vspace{.15in}
&\quad \quad
(1_H  \otimes  1_H \otimes y \otimes u 
+ 
1_H  \otimes y \otimes 1_H \otimes u 
+ 
y  \otimes 1_H \otimes 1_H \otimes u)
\\
\vspace{.05in}
&=
[(m_H \otimes \ide_A)(\ide_H \otimes \Psi_{A,H})(\ide_H \otimes \; a_A \otimes S_H)]\\
\vspace{.15in}
&\quad \quad
(q^{-2}(1_H  \otimes 1_H \otimes u \otimes y) 
+ 
(1_H  \otimes y \otimes u \otimes 1_H) 
+ 
(y  \otimes 1_H \otimes u \otimes 1_H))
\\
\vspace{.05in}
&=
[(m_H \otimes \ide_A)(\ide_H \otimes \Psi_{A,H})]\\
\vspace{.15in}
&\quad \quad
(-q^{-2}(1_H \otimes u \otimes y) 
+ 
\gamma(1_H  \otimes 1_A \otimes 1_H) 
+ 
(y  \otimes u \otimes 1_H))
\\
\vspace{.15in}
&=
[(m_H \otimes \ide_A)]
(-q^{-4}(1_H \otimes y \otimes  u) 
+ 
\gamma(1_H \otimes 1_H \otimes 1_A) 
+ 
(y  \otimes 1_H \otimes u))
\\
&=
-q^{-4}(y \otimes u) 
+ 
\gamma(1_H \otimes 1_A) 
+ 
(y \otimes u).
\end{align*}
\endgroup

\vspace{-.25in}
\end{proof}

\begin{proposition} \label{prop:uqsl2RBA} Retain the notation above.
Then, the $\cB$-center of $A=A_\gamma$ is 
$$Z_\cB(A_\gamma) = \begin{cases}
H \otimes \Bbbk[{\bf u}^n], & \text{ for } \gamma = 0\\
\Bbbk[{\bf z}], & \text{ for } \gamma \neq 0,
\end{cases}
\quad \text{with} \quad {\bf z}:= \sum_{i=0}^{n-1} \gamma^{-i} q^{-2({i+1 \choose 2}+i)} (1-q^2)^i  ({\bf y}^i \otimes {\bf u}^{i+1}).$$
For $\gamma \neq 0$, we have that as an object in $\lYD{H}(\cB)$, the braided commutative algebra $Z_\cB(A_{\gamma \neq 0})$ has $K$-action, $H$-action, and $H$-coaction given by
$$a(g \otimes  {\bf z}) = q^{2} {\bf z}, \quad \quad a^{\rR}(x \otimes {\bf z}) = \gamma {\bf 1}, \quad \quad \delta^{\rR}({\bf z}) = \sum_{i=0}^{n-1} \gamma^{-i} (1-q^2)^i q^{-2({i+1 \choose 2}+i)} ({\bf y}^i \otimes  {\bf z}^{i+1}).$$
\end{proposition}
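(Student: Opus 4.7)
The plan is to combine Theorem~\ref{thm:Bcenter}, which gives $Z_\cB(A_\gamma)\cong C^l(\rR_\cB(A_\gamma))$, with the concrete description of $\rR_\cB(A_\gamma)$ from Lemma~\ref{lem:uqsl2RBA}, and then solve the resulting left-center condition directly inside $\Bbbk\langle\mathbf{y},\mathbf{u}\rangle/(\mathbf{y}^n,\,\mathbf{u}\mathbf{y}-q^{-2}\mathbf{y}\mathbf{u})$. Lemma~\ref{cent-lemma} is particularly convenient here: it expresses $C^l(\rR_\cB(A_\gamma))$ as the terminal subobject $\gamma\colon C\to H\otimes A$ in $\cB$ satisfying an equation whose right input ranges only over the small polynomial algebra $A=\Bbbk[u]$, rather than over all of $\rR_\cB(A_\gamma)$. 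I would parametrize a generic element as $z = \sum_{0\le a<n,\,b\ge 0} c_{a,b}\,\mathbf{y}^a\mathbf{u}^b$ and expand the condition of Lemma~\ref{cent-lemma} using the braiding $\Psi^\cB(v\otimes w) = q^{2|v||w|}\,w\otimes v$ together with the Yetter--Drinfeld data $\delta^\rR(\mathbf{y}) = 1\otimes\mathbf{y} + x\otimes\mathbf{1}$, $\delta^\rR(\mathbf{u}) = 1\otimes\mathbf{u}$, and the $x$-actions from Lemma~\ref{lem:uqsl2RBA}, obtaining a linear system in the $c_{a,b}$ indexed by the basis elements $u^k$ of $A$.

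In the case $\gamma=0$ the $x$-action $a^\rR(x\otimes\mathbf{u}) = (1-q^{-4})\mathbf{y}\mathbf{u}$ has homogeneous $K$-weight, so the system decouples and forces $q^{2b}=1$ (i.e.~$n\mid b$) whenever $c_{a,b}\ne 0$, placing no constraint on the $H$-index $a$; this yields $C^l(\rR_\cB(A_0)) = H\otimes\Bbbk[\mathbf{u}^n]$, consistent with Example~\ref{expl:H-center} for the $H$-factor together with the observation that $\mathbf{u}^n$ is $K$-invariant and $H$-coinvariant and hence braided central. In the case $\gamma\ne 0$ the inhomogeneous term $\gamma\mathbf{1}$ in $a^\rR(x\otimes\mathbf{u})$ cross-couples $c_{a,b}$ with $c_{a-1,b-1}$, and I would solve by induction on $b$: the normalization $c_{0,1}=1$ propagates through the recurrence, which truncates at $a = n-1$ thanks to $\mathbf{y}^n=0$, producing the stated closed-form element $\mathbf{z}$. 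Braided commutativity of $\Bbbk[\mathbf{z}]\cong Z_\cB(A_\gamma)$ is then automatic from Proposition~\ref{prop:ZBA}.

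Finally, I would read off the Yetter--Drinfeld structure on $\mathbf{z}$ summand-by-summand using Lemma~\ref{lem:uqsl2RBA}: each summand $\mathbf{y}^i\mathbf{u}^{i+1}$ carries uniform $K$-weight $q^2$; the $x$-action, computed via the braided Leibniz rule $x\cdot(vw) = (x\cdot v)w + q^{-2|v|}\,v(x\cdot w)$, telescopes between consecutive summands $i$ and $i+1$ and collapses to $\gamma\mathbf{1}$; and the coaction follows from the fact that $\delta^\rR$ is an algebra map into the braided tensor product algebra $H\otimes\rR_\cB(A_\gamma)$ in $\cB$, combined with the $q^2$-binomial expansion of $\delta^\rR(\mathbf{y}^i)$ and a reorganization of the resulting double sum into powers of $\mathbf{z}$. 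The main obstacle will be pinning down the precise prefactor $\gamma^{-i}(1-q^2)^i q^{-2(\binom{i+1}{2}+i)}$ from the recurrence and verifying the self-referential coaction identity, both of which ultimately rest on $q^2$-binomial identities of Vandermonde type.
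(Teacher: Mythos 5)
Your proposal is correct and follows essentially the same route as the paper's proof: reduce to the left center of $\rR_\cB(A_\gamma)$ via Theorem~\ref{thm:Bcenter} and Lemma~\ref{lem:uqsl2RBA}, derive a coefficient recurrence (the paper's Equation~\ref{eq:lambda}, obtained by testing only against ${\bf u}$ --- which is precisely what your appeal to Lemma~\ref{cent-lemma} justifies, and which the paper instead justifies by checking that ${\bf t}={\bf y}$ imposes no conditions), split into the cases $\gamma=0$ and $\gamma\neq 0$, and finish with a routine computation of the Yetter--Drinfeld structure on ${\bf z}$. The one place your plan is thinner than the paper's is at the end of the $\gamma\neq 0$ case: the recurrence has one free parameter per diagonal, yielding a family of solutions ${\bf z}_\ell$ (normalized by $c_{0,\ell}=1$), so to conclude $Z_\cB(A_\gamma)=\Bbbk[{\bf z}]$ as an algebra you must still verify ${\bf z}_\ell = {\bf z}^\ell$ for all $\ell$, a $q^2$-binomial identity the paper records explicitly.
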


\begin{proof}
Using Theorem~\ref{thm:Bcenter}, we compute the $\cB$-center of $A$ by computing the left center of the algebra $\rR_\cB(A)$ given in Lemma~\ref{lem:uqsl2RBA}. By Examples~\ref{ex:ClA} and~\ref{ex:CentlA}, we have that
$$C^l\rR_\cB(A) = \left\{{\bf s}:=\sum_{i=0}^{n-1} \sum_{j \geq 0} \lambda_{i,j} \; {\bf y}^i \otimes {\bf u}^j \in H \otimes A  ~ \Big|~ m_{H \otimes A}({\bf s} \otimes {\bf t}) = m_{H \otimes A}\; \Psi^{\bf YD}\;({\bf s} \otimes {\bf t}), ~\forall {\bf t} \in H \otimes A\right\},$$
for $\lambda_{i,j} \in \Bbbk$, and $\Psi^{\bf YD}$ from Definition~\ref{def:HYD(B)} and Theorem~\ref{prop:RB-YD}:
$$\Psi^{\bf YD}~:= \Psi^{\lYD{H}(\cB)}_{H \otimes A, ~H \otimes A} ~= (a^{\rR}_{H \otimes A} \otimes \text{Id}_{H \otimes A})\;(\text{Id}_H \otimes \Psi^{\cB}_{H \otimes A, ~H \otimes A})\;(\delta^{\rR}_{H \otimes A} \otimes \text{Id}_{H \otimes A}).$$

\smallskip

One can check that choosing ${\bf t} = {\bf y}$ does not yield conditions on ${\bf s}$. On the other hand, choosing ${\bf t} = {\bf u}$ yields the following conditions on ${\bf s}$: 
\begin{equation} \label{eq:lambda}
(q^{2(j-1)} - 1) \;\lambda_{i,j-1} + \gamma\; \frac{q^{2j}(1-q^{2(i+1)})}{1-q^2} \; \lambda_{i+1, j} = 0, \quad \forall i,j \geq 0.
\end{equation}

If $\gamma =0$, then $\lambda_{i,j-1}$ is free for $j \equiv 1\mod n$, and equal to 0 otherwise. Then
\begin{equation} \label{eq:gamma0}
C^l\rR_\cB(A) = \left\{{\bf s}:=\sum_{i=0}^{n-1} \sum_{k \geq 0} \lambda_{i,kn} \; {\bf y}^i \otimes  {\bf u}^{kn}\right\} = H \otimes \Bbbk[{\bf u}^n] = Z_\cB(A).
\end{equation}

If $\gamma \neq 0$, then taking 0 to be in the second slot of $\lambda_{-,-}$ in \eqref{eq:lambda} implies only that ${\bf 1} \in Z_\cB(A)$. Taking 0 to be in the first slot of $\lambda_{-,-}$ in \eqref{eq:lambda} implies that the elements
$${\bf z}_\ell:= \sum_{k=0}^{n-1} \gamma^{-k} {\ell + k -1 \choose k}_{q^2}\; q^{-2(k\ell + {k+1 \choose 2})}\; (1-q^2)^k \; ({\bf y}^k \otimes {\bf u}^{k + \ell}),~~\forall \ell \geq 0$$
are in $Z_\cB(A)$. Moreover, one can check that 
$${\bf z}_\ell = ({\bf z}_1)^\ell, ~~\forall \ell \geq 0.$$
So taking ${\bf z}:= {\bf z}_1$, we get
\begin{equation} \label{eq:gammaneq0}
C^l\rR_\cB(A) = \left\{{\bf s}:= \sum_{k \geq 0} \lambda_{k} \; {\bf z}^{k}\right\} =  \Bbbk[{\bf z}] = Z_\cB(A).
\end{equation}

Now Equations \ref{eq:gamma0} and \ref{eq:gammaneq0} give us the description $Z_\cB(A_\gamma)$.

\smallskip

Determining the Yetter--Drinfeld structure on $Z_\cB(A_{\gamma \neq 0})$ is a routine calculation using the action and coaction from Theorem~\ref{prop:RB-YD} and compatibility condition from Definition~\ref{def:HYD(B)}.
\end{proof}

Now with Theorem~\ref{thm:Morita} we arrive at the consequence below.

\begin{corollary} \label{cor:MoritaSec6}
The objects $A_{\gamma = 0}$ and $A_{\gamma \neq 0}$ are  Morita inequivalent as algebras in $\cC$. \qed
\end{corollary}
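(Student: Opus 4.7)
My plan is to argue by contradiction using Theorem~\ref{thm:Morita} together with the explicit descriptions of the $\cB$-centers supplied by Proposition~\ref{prop:uqsl2RBA}. Suppose that $A_0$ and $A_\gamma$ with $\gamma \neq 0$ are Morita equivalent in $\cC$. Then Theorem~\ref{thm:Morita} yields an isomorphism $Z_\cB(A_0) \cong Z_\cB(A_\gamma)$ in $\ComAlg(\cZ_\cB(\cC))$, and post-composing with the forgetful functor $\cZ_\cB(\cC) \simeq \lYD{H}(\cB) \to \Vect$, one obtains in particular an isomorphism of the underlying $\Bbbk$-algebras. It therefore suffices to exhibit a $\Bbbk$-algebra invariant that separates these two rings.

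The invariant I would use is the nilradical. On one side, $Z_\cB(A_0) = H \otimes \Bbbk[\mathbf{u}^n]$ contains the nonzero element $\mathbf{y} = y \otimes 1_A$, which satisfies $\mathbf{y}^n = 0$ since $H = \Bbbk[x]/(x^n)$; hence $Z_\cB(A_0)$ is not reduced. On the other side, I claim $Z_\cB(A_{\gamma \neq 0}) = \Bbbk[\mathbf{z}]$ is reduced. To verify this, I would work inside the ambient algebra
\[ \rR_\cB(A) \;=\; \Bbbk\langle \mathbf{y},\mathbf{u}\rangle/(\mathbf{y}^n,\; \mathbf{u}\mathbf{y} - q^{-2}\mathbf{y}\mathbf{u}) \]
from Lemma~\ref{lem:uqsl2RBA}, which admits the PBW-type $\Bbbk$-basis $\{\mathbf{y}^a\mathbf{u}^b\}_{0\le a<n,\; b\ge 0}$. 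Expanding $\mathbf{z}^\ell$ as a product of the defining sum for $\mathbf{z}$, each choice of summand indices $(i_1,\dots,i_\ell)$ produces (after applying the q-commutation to move all $\mathbf{u}$'s to the right) a scalar multiple of $\mathbf{y}^{i_1+\cdots+i_\ell}\mathbf{u}^{i_1+\cdots+i_\ell+\ell}$; for this monomial to coincide with the basis element $\mathbf{u}^\ell$ one must have $i_1 = \cdots = i_\ell = 0$. Since the $i=0$ summand of $\mathbf{z}$ has coefficient $1$, the coefficient of $\mathbf{u}^\ell$ in $\mathbf{z}^\ell$ is $1 \neq 0$, so $\mathbf{z}^\ell \neq 0$ for every $\ell \geq 1$. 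Thus $\mathbf{z}$ is non-nilpotent, and the same leading-term argument shows the powers $\{\mathbf{z}^\ell\}_{\ell\geq 0}$ are linearly independent, so $\Bbbk[\mathbf{z}] \cong \Bbbk[t]$ is in fact reduced.

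The two $\Bbbk$-algebras $Z_\cB(A_0)$ and $Z_\cB(A_{\gamma \neq 0})$ therefore have different nilradicals, contradicting the isomorphism forced by Theorem~\ref{thm:Morita}. The only computational step is the leading-term analysis in $\rR_\cB(A)$, which I expect to be entirely routine given the q-commuting PBW basis; I do not anticipate any real obstacle.
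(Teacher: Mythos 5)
Your proof is correct and takes essentially the same route as the paper: the paper's own (tacit, \qed) proof of Corollary~\ref{cor:MoritaSec6} is exactly the combination of Theorem~\ref{thm:Morita} with the computation of the two $\cB$-centers in Proposition~\ref{prop:uqsl2RBA}. Your nilradical and leading-term analysis merely makes explicit the non-isomorphism $H \otimes \Bbbk[\mathbf{u}^n] \not\cong \Bbbk[\mathbf{z}]$ of underlying $\Bbbk$-algebras that the paper leaves to the reader --- in particular the verification, via the PBW basis of $\rR_\cB(A)$, that the powers of $\mathbf{z}$ are linearly independent so that $\Bbbk[\mathbf{z}]$ is genuinely a polynomial ring --- which is a correct and worthwhile detail to supply.
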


Finally, we translate these results to $\lmod{u_q(\mathfrak{sl}_2)}$. To do so,
we return to braided Drinfeld doubles from Definition~\ref{def:DrinKH}.

\begin{lemma}  \label{lem:DKH}
Consider the quasi-triangular $\Bbbk$-Hopf algebra $K = \Bbbk \mathbb{Z}_n$, with $\mathbb{Z}_n  = \langle g ~|~ g^n=1\rangle$, and take Hopf algebras $H = \Bbbk[x]/(x^n)$ and $H^* = \Bbbk[x^*]/((x^*)^n)$  in $\lmod{K}$ as in Notation~\ref{not:uqsl2}. Choose the nondegenerate pairing $\langle ~, \rangle: H^* \otimes H \to \Bbbk$ determined by
$$\langle x^*, x \rangle = \frac{1}{q - q^{-1}}.$$
Then, the braided Drinfeld double $\textnormal{Drin}_K(H^*,H)$ of $H$ with respect to $K$ and the pairing above is generated by a group-like element $g$, a $(g^{-1},1)$-skew primitive element $x$, and a $(g^{-1},1)$-skew primitive element $x^*$, subject to relations:
$$g^n = 1, \quad x^n = (x^*)^n =0, \quad gx = q^{-2} xg, \quad gx^* = q^{2}x^*g, \quad x^* x - q^2 x x^* = \frac{1}{q-q^{-1}}(1-g^{-2}).$$
\end{lemma}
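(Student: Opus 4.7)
The plan is to verify each of the presented relations and then confirm that the presentation is complete. The relations $g^n = 1$, $x^n = 0$, and $(x^*)^n = 0$ are inherited from the Hopf subalgebras $K$, $H$, and $H^*$ of $\Drin_K(H^*, H)$. The commutation relations $gx = q^{-2}xg$ and $gx^* = q^2 x^* g$ follow directly from the identities $db = (d_{(1)} \cdot b) d_{(2)}$ and $dc = (d_{(1)} \cdot c) d_{(2)}$ of Definition~\ref{def:DrinKH} applied to $d = g$, using the $K$-action $g \cdot x = q^{-2}x$ from Notation~\ref{not:uqsl2} and the induced dual action $g \cdot x^* = q^{2} x^*$. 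The skew-primitive structure will be obtained from the coproduct formulas $\Delta(b) = b_{(1)} R^{(2)} \otimes (R^{(1)} \cdot b_{(2)})$ and $\Delta(c) = R^{-(1)} c_{(2)} \otimes (R^{-(2)} \cdot c_{(1)})$: substituting $\Delta_H(x) = 1 \otimes x + x \otimes 1$ and $\Delta_{H^*}(x^*) = 1 \otimes x^* + x^* \otimes 1$, expanding $\cal{R} = \frac{1}{n}\sum_{i,j}q^{-2ij}g^i\otimes g^j$, and applying the character-sum identity $\frac{1}{n}\sum_{i} q^{-2i(j\pm 1)} = \delta_{j, \mp 1 \bmod n}$ will collapse each double sum to a single term, yielding $\Delta(x) = g^{-1} \otimes x + x \otimes 1$ and $\Delta(x^*) = g^{-1} \otimes x^* + x^* \otimes 1$.

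The core of the proof is the cross-relation. The plan is to specialize the third identity of Definition~\ref{def:DrinKH} to $b = x$ and $c = x^*$, expanding both primitive coproducts to produce four terms on each side. By the Hopf pairing axiom $\langle 1, x \rangle = 0 = \langle x^*, 1 \rangle$, only two of the four contributions survive on each side. On the left, the surviving terms give $(R^{(-1)} \cdot x)(R^{(-2)} \cdot x^*) + \frac{1}{q-q^{-1}}$; expanding $R^{-1}$ and using $g^{-i} \cdot x = q^{2i} x$ and $g^j \cdot x^* = q^{2j} x^*$, the character sum forces the surviving index to $i = 1$, collapsing the left-hand side to $q^2 xx^* + \frac{1}{q-q^{-1}}$. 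On the right, the two surviving terms are $R^{(-1)} x^* x R^{(2)}$ with a factor $\varepsilon\otimes\varepsilon$ applied to the $\cal{R}$-matrices, which reduces to $x^* x$, and $R^{(-1)} R^{(2)} \langle R^{(-2)} \cdot x^*, R^{(1)} \cdot x\rangle$ in which a double character sum over the four indices of $R^{-1}$ and $\cal{R}$ forces $R^{(-1)} = g^{-1}$ and $R^{(2)} = g^{-1}$, producing $\frac{1}{q-q^{-1}} g^{-2}$. Equating both sides and rearranging yields exactly $x^*x - q^2 xx^* = \frac{1}{q-q^{-1}}(1 - g^{-2})$.

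The main obstacle is the careful bookkeeping in this four-fold sum, where one must distinguish between algebra products (such as the juxtaposition $R^{(-1)} c_{(2)}$, which produces commutation scalars of the form $q^{-2i}$ via normal-ordering using $dc = (d_{(1)} \cdot c) d_{(2)}$) and $K$-module actions (such as $R^{(-2)} \cdot c_{(1)}$, which contribute only the scalar $q^{2j}$). To finish, the plan is to confirm that the listed relations are complete by a dimension count: by Definition~\ref{def:DrinKH}, the algebra $\Drin_K(H^*, H)$ has underlying vector space $H^* \otimes K \otimes H$ of dimension $n^3$, and the presented algebra on generators $g, x, x^*$ with the given relations admits the spanning set $\{(x^*)^a g^b x^c : 0 \le a, b, c \le n-1\}$ of cardinality $n^3$, which maps to a basis of $\Drin_K(H^*, H)$; hence the surjection from the presented algebra to $\Drin_K(H^*, H)$ is an isomorphism.
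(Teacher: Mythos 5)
Your proposal is correct and takes essentially the same route as the paper: the paper's proof consists exactly of your cross-relation computation (specializing the third identity of Definition~\ref{def:DrinKH} to $b=x$, $c=x^*$, using $R^{(2)}\otimes(R^{(1)}\cdot x)=g^{-1}\otimes x$ and $R^{-(1)}\otimes(R^{-(2)}\cdot x^*)=g^{-1}\otimes x^*$ to collapse the character sums and obtain $x^*x - q^2xx^* = \tfrac{1}{q-q^{-1}}(1-g^{-2})$), with all other verifications left to the reader. Your extra steps --- deriving the skew-primitive coproducts from the formulas in Definition~\ref{def:DrinKH} and the $n^3$ dimension count showing the listed relations give a complete presentation --- are correct and fill in details the paper omits.
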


\begin{proof}
We provide one computation and leave the rest to the reader. Note that
$$R^{(2)} \otimes (R^{(1)} \cdot x) = g^{-1} \otimes x \quad \quad \text{and} \quad
\quad R^{-(1)} \otimes (R^{-(2)} \cdot x^*) = g^{-1} \otimes x^*.$$
Now, from Definition~\ref{def:DrinKH}, consider the relation $$(R^{(-1)} \cdot b_{(2)}) \; (R^{(-2)} \cdot c_{(1)}) \; \langle c_{(2)},~b_{(1)} \rangle ~=~
R^{(-1)} \; c_{(2)} \; b_{(1)}) \; R^{(2)} \; \langle R^{-(2)} \cdot c_{(1)},~R^{(1)} \cdot b_{(2)} \rangle, $$
for $b = x$ and $c = x^*$. 
The left-hand side is
\[
\begin{array}{rl}
(R^{(-1)} \cdot x) \; (R^{(-2)} \cdot x^*) \; \langle 1^*,~1 \rangle + (R^{(-1)} \cdot 1) \; (R^{(-2)} \cdot 1^*) \; \langle x^*,~x \rangle
&~=~ (g^{-1} \cdot x)x^* + (g^{-1} \cdot 1) 1^* \langle x^*, x \rangle\\
&~=~ q^2 xx^* + \frac{1}{q - q^{-1}},
\end{array}
\]
and the right-side is
\[
\begin{array}{l}
\smallskip
R^{(-1)} \; x^* \;x \; R^{(2)} \; \langle R^{-(2)} \cdot 1^*,~R^{(1)} \cdot 1 \rangle
+ R^{(-1)} \; 1^* \;1 \; R^{(2)} \; \langle R^{-(2)} \cdot x^*,~R^{(1)} \cdot x \rangle\\
\smallskip
\quad ~=~ R^{(-1)} \; x^* \;x \; R^{(2)} \; \langle \varepsilon(R^{-(2)}),~\varepsilon(R^{(1)}) \rangle
+ g^{-1} \; 1^* \;1 \; g^{-1} \; \langle x^*,~ x \rangle\\
\quad ~=~ R^{(-1)}\varepsilon(R^{-(2)}) \; x^* \;x \; \varepsilon(R^{(1)}) R^{(2)}  \; \langle 1,~1 \rangle
+ g^{-2} \frac{1}{q - q^{-1}}
\smallskip
\\
\quad ~=~  x^* \;x 
+ g^{-2} \frac{1}{q - q^{-1}}.
\end{array}
\]
The last equation holds as $(\text{Id} \otimes \varepsilon) \cal{R}^{-1} = (\varepsilon \otimes \text{id}) \cal{R} = 1$. Thus,  $x^* x - q^2 x x^* = \frac{1}{q-q^{-1}}(1-g^{-2})$.
\end{proof}

Now consider the small quantum group $u_q(\mathfrak{sl}_2)$, for $q$ a root of unity so that $q^2$ has order $n$ for $n \geq 3$ (e.g., as in \cite{Kassel}).  We take $u_q(\mathfrak{sl}_2)$ to be generated by indeterminates $k$, $e$, $f$, subject to relations
$$k^n =1, \quad e^n = f^n = 0, \quad ke = q^2 ek, \quad kf = q^{-2}fk, \quad ef-fe = \frac{k - k^{-1}}{q-q^{-1}},$$
where 
$$\Delta(k) = k \otimes k, \quad \Delta(e) = 1 \otimes e + e \otimes k,  \quad \Delta(f) = k^{-1} \otimes f + f \otimes 1, \quad \varepsilon(k) = 1,  \quad \varepsilon(e) = \varepsilon(f) =0.$$

\begin{proposition} \label{prop:uqaction}
\begin{enumerate}
\item[\textnormal{(1)}] We have that $u_q(\mathfrak{sl}_2)$ is isomorphic to the braided Drinfeld double $\textnormal{Drin}_K(H^*,H)$ from Lemma~\ref{lem:DKH}.

\smallskip

\item[\textnormal{(2)}] The Hopf subalgebra $u_q(\mathfrak{sl}^-_2)$ (the negative Borel part) of $u_q(\mathfrak{sl}_2)$ generated by $k$ and $f$  is isomorphic to the Taft algebra $T_n(q^{-2})$.

\smallskip

\item[\textnormal{(3)}] We have that $\rR_\cB(A_\gamma)$ from Lemma~\ref{lem:uqsl2RBA} is an algebra in $\lmod{u_q(\mathfrak{sl}_2)}$ via
\[
\begin{array}{lll}
k \cdot {\bf y} = q^{-2} {\bf y}, \quad
&f \cdot {\bf y} = (1-q^2) {\bf y}^2, \quad
&e \cdot {\bf y} = \frac{1}{q-q^{-1}},\\
k \cdot {\bf u} = q^{2} {\bf u}, \quad 
& f \cdot {\bf u} = (1-q^{-4}) {\bf y}{\bf u} +\gamma, \quad 
&e \cdot {\bf u} = 0.
\end{array}
\]

\smallskip

\item[\textnormal{(4)}] Moreover, $Z_\cB(A_\gamma)$ from Proposition~\ref{prop:uqsl2RBA} is a commutative algebra in $\lmod{u_q(\mathfrak{sl}_2)}$ via
$$ \textstyle k \cdot {\bf z} = q^{2} {\bf z}, \quad \quad f \cdot {\bf z} = \gamma,\quad \quad e \cdot {\bf z} = -q\gamma^{-1}{\bf z}^2.$$
\end{enumerate}
\end{proposition}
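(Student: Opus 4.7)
The plan is to handle the four parts sequentially, with parts (3) and (4) becoming essentially translations once the Hopf algebra isomorphism in (1) is established. For (1), I will define a $\Bbbk$-algebra map $\Theta\colon u_q(\mathfrak{sl}_2) \to \textnormal{Drin}_K(H^*, H)$ by $k \mapsto g$, $f \mapsto x$, $e \mapsto gx^*$, and verify compatibility with the relations, coproducts, counits, and antipodes. The relations $kf = q^{-2}fk$ and $ke = q^2 ek$ are immediate from $gx = q^{-2}xg$ and $gx^* = q^2 x^* g$ in Lemma~\ref{lem:DKH}. The nontrivial check is for $ef - fe$: expanding $ef = gx^*x$ and $fe = xgx^* = q^2 g x x^*$, we obtain $ef - fe = g(x^*x - q^2 xx^*) = g\cdot(1-g^{-2})/(q-q^{-1}) = (g - g^{-1})/(q-q^{-1})$, matching the target. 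For the coalgebra structure, unpacking the $\cal{R}^{-1}$-twisted formula in Definition~\ref{def:DrinKH} on the primitive $x^* \in H^*$ yields $\Delta_{\textnormal{Drin}}(x^*) = g^{-1} \otimes x^* + x^* \otimes 1$; multiplying by $\Delta(g) = g \otimes g$ gives $\Delta(gx^*) = 1 \otimes gx^* + gx^* \otimes g$, matching $\Delta(e) = 1 \otimes e + e \otimes k$. A similar unpacking handles $\Delta(x)$, while the counit and antipode formulas are immediate. Bijectivity of $\Theta$ then follows from a dimension count ($n^3$ on both sides). Part (2) is then immediate, as the subalgebra of $u_q(\mathfrak{sl}_2)$ generated by $k$ and $f$ inherits exactly the relations and coproducts defining $T_n(q^{-2})$ in Notation~\ref{not:uqsl2}.

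For (3), I use the chain of equivalences $\cC = \lmod{H}(\cB) \simeq \lmod{T_n(q^{-2})} \simeq \lmod{u_q(\mathfrak{sl}_2^-)}$, placing $A_\gamma$ as a $u_q(\mathfrak{sl}_2^-)$-module algebra, together with $\cZ_\cB(\cC) \simeq \lYD{H}(\cB) \simeq \lmod{\textnormal{Drin}_K(H^*, H)} \simeq \lmod{u_q(\mathfrak{sl}_2)}$, where the middle equivalence comes from Proposition~\ref{prop:Phi} (valid since $\dim H < \infty$) and the last from part (1). Under $\Theta$, the $k$- and $f$-actions on $\rR_\cB(A_\gamma)$ are read off directly from Lemma~\ref{lem:uqsl2RBA}. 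The $e$-action, with $e = gx^*$, is the composition of the $H^*$-action $x^*\cdot v = \langle x^*, v_{(-1)}\rangle v_{(0)}$ from Proposition~\ref{prop:Phi} with the subsequent $K$-action of $g$: against $\delta^{\rR}(\mathbf{y}) = 1_H \otimes \mathbf{y} + x \otimes \mathbf{1}$ with $\langle x^*, x \rangle = 1/(q-q^{-1})$, this gives $e\cdot \mathbf{y} = 1/(q-q^{-1})$ (as $g$ fixes the scalar), and against $\delta^{\rR}(\mathbf{u}) = 1_H \otimes \mathbf{u}$ it gives $e \cdot \mathbf{u} = 0$, matching the claimed formulas.

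For (4), the subalgebra $Z_\cB(A_\gamma) \hookrightarrow \rR_\cB(A_\gamma)$ inherits the $u_q(\mathfrak{sl}_2)$-structure of part (3), and the values of $k\cdot\mathbf{z}$ and $f\cdot\mathbf{z}$ are read off from Proposition~\ref{prop:uqsl2RBA}. For $e\cdot\mathbf{z}$, I apply $x^*$ against the explicit coaction $\delta^{\rR}(\mathbf{z}) = \sum_i \gamma^{-i}(1-q^2)^i q^{-2(\binom{i+1}{2}+i)}(\mathbf{y}^i \otimes \mathbf{z}^{i+1})$. Since the pairing $H^* \otimes H \to \Bbbk$ is a morphism in $\cB = \lmod{K}$, weight-matching ($|x^*| = 2$ and $|x^i| = -2i$) forces $\langle x^*, x^i \rangle = 0$ unless $i = 1$, so only the $i = 1$ summand survives. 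The resulting arithmetic gives $x^* \cdot \mathbf{z} = -q^{-3}\gamma^{-1}\mathbf{z}^2$ (using $(1-q^2)/(q-q^{-1}) = -q$), and then $e\cdot\mathbf{z} = g\cdot(x^*\cdot\mathbf{z}) = q^4 \cdot (-q^{-3}\gamma^{-1})\mathbf{z}^2 = -q\gamma^{-1}\mathbf{z}^2$.

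The main obstacle I anticipate is the careful $\cal{R}^{-1}$-bookkeeping in part (1), especially verifying $\Delta_{\textnormal{Drin}}(x^*) = g^{-1} \otimes x^* + x^* \otimes 1$ from the formula in Definition~\ref{def:DrinKH} and the $K$-weights of $x^*$. Once $\Theta$ is in hand, parts (2)--(4) reduce to direct translations supplemented by the single weight-matching observation for the pairing used in (4).
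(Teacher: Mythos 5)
Your proposal is correct and follows essentially the same route as the paper: your $\Theta$ is precisely the inverse of the paper's isomorphism $\pi\colon \textnormal{Drin}_K(H^*,H)\to u_q(\mathfrak{sl}_2)$, $g\mapsto k$, $x\mapsto f$, $x^*\mapsto k^{-1}e$, and parts (3)--(4) are obtained exactly as in the paper by computing the $x^*$-action via Proposition~\ref{prop:Phi} and the pairing of Lemma~\ref{lem:DKH} (with only the $i=1$ term of $\delta^{\rR}(\mathbf{z})$ surviving), then twisting by the $g$-action. Your write-up merely supplies details the paper leaves implicit (the relation/coproduct checks for $\Theta$ and the weight-matching justification for $\langle x^*,\mathbf{y}^i\rangle=0$ unless $i=1$), and all of these computations are accurate.
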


\begin{proof}
(1,2) The isomorphism $\pi$ from $\textnormal{Drin}_K(H^*,H)$ to $u_q(\mathfrak{sl}_2)$ is given by $g \mapsto k$, $x \mapsto f$,  $x^* \mapsto k^{-1}e$, which yields (1). Part (2) follows directly using this isomorphism $\pi$.

\smallskip

(3) The action of $k$ and of $f$ follows from Lemma~\ref{lem:uqsl2RBA} using the isomorphism $\pi$ from above. By Proposition~\ref{prop:Phi} and Lemma~\ref{lem:uqsl2RBA}, along with the pairing in Lemma~\ref{lem:DKH}, we have that
\[
\begin{array}{c}
\smallskip
x^* \cdot {\bf y}  ~= ~\langle x^*, {\bf y}_{(-1)}\rangle\; {\bf y}_{(0)}~ = ~\langle x^*, 1_H \rangle \;{\bf y} + \langle x^*, {\bf y} \rangle \;1_A  ~=~ \frac{1}{q-q^{-1}},\\
$$x^* \cdot {\bf u}  ~= ~\langle x^*, {\bf u}_{(-1)}\rangle\; {\bf u}_{(0)}~ = ~\langle x^*, 1_H \rangle \; {\bf u}  ~=~ 0.
\end{array}
\]
Now the conclusion holds by the isomorphism $\pi$ from above.

\smallskip

(4) The action of $k$ and of $f$ follows from Proposition~\ref{prop:uqsl2RBA} and using $\pi$ from above. By Propositions~\ref{prop:Phi} and~\ref{prop:uqsl2RBA}, along with the pairing in Lemma~\ref{lem:DKH}, we have that
$$\textstyle x^* \cdot {\bf z}  ~= ~\langle x^*, {\bf z}_{(-1)}\rangle\; {\bf z}_{(0)}~ = \gamma^{-1} (1-q^2)q^{-2(2)}\langle x^*, {\bf y} \rangle \; {\bf z}^2 ~=~ \frac{1-q^2}{\gamma q^4(q-q^{-1})}{\bf z}^2 ~=~ -\frac{1}{\gamma q^3}{\bf z}^2.$$
Namely, for $\delta^{\rR}({\bf z})$ in Proposition~\ref{prop:uqsl2RBA}, we get that $\langle x^*, {\bf y}^i \rangle$ is nonzero only when $i=1$. Now the conclusion holds by the isomorphism $\pi$ above.
\end{proof}


\medskip

\section{Example: Module algebras over the Sweedler Hopf algebra} \label{sec:T2}

We provide an example illustrating that the $\cB$-center of a parametrized family of algebras can be parameter-independent. Take char$(\Bbbk) \neq 2$, recall Notation~\ref{not:Sweedler}, and consider the following notation.

\begin{notation} \label{not:T2}
For $\xi \in \Bbbk$, let $(K, \cal{R}_{\xi})$ be the Sweedler Hopf algebra 
$$K = T_2(-1) = \Bbbk \langle g, x \rangle/(g^2-1, ~x^2, ~gx+xg) ,$$
with $g$ grouplike and $x$ being $(g,1)$-skew-primitive, which is quasi-triangular with $\cal{R}$-matrix
$$\textstyle \cal{R}_{\xi} = \frac{1}{2}(1 \otimes 1 + 1 \otimes g + g \otimes 1 - g \otimes g) + \frac{\xi}{2}(x \otimes x + x \otimes gx + gx \otimes gx - gx \otimes x),$$
for $\xi \in \Bbbk$.
Take $\cal{B} = \lmod{K}$, which has braiding $\Psi_{\xi}^\cal{B}(v,w) = (\cal{R}_{\xi}^{(2)} \cdot w) \otimes (\cal{R}_{\xi}^{(1)} \cdot v)$. Let $\cC = \lmod{H}(\cB)$, with $H = \Bbbk$ a Hopf algebra in $\cB$. Take $A_\gamma$ to be $ \Bbbk[u] \in \Alg(\cC)$ with 
$$g \cdot u = -u \quad \text{and} \quad x \cdot u = \gamma, \quad \text{for} \quad\gamma \in \Bbbk.$$
\end{notation}

As discussed in Section~\ref{sec:Cbraided}, both $\cC$ and its $\cB$-center $\cZ_\cB(\cC)$ are isomorphic to  $\cB$, and moreover $\Psi^{\cZ_\cB(\cC)} = \Psi^\cB =: \Psi$. Further,  the $\cB$-center $Z_\cB(A_\gamma)$  is actually the left center $C^l(A_\gamma)$. Now
$$ \textstyle C^l\rR_\cB(A_\gamma) \cong C^l(A_\gamma) = \left\{ s:=\sum_{i \geq 0} \lambda_{i}  u^i \in A_\gamma  ~ \Big|~ m_{A_\gamma}(s \otimes t) = m_{A_\gamma} \Psi(s \otimes  t), ~\forall  t \in A_\gamma\right\}.$$
Using the fact that $x \cdot u^i = \gamma u^{i-1}$ if $i$ is odd and $=0$ if $i$ even, we see that $m \Psi (u^i \otimes u) = -u^{i+1} + \xi \gamma^2 u^{i-1}$  if $i$ is odd and $= u^{i+1}$ if $i$ even. Therefore, from the condition defining $C^l(A_\gamma)$, we have that
$$\lambda_1 \xi \gamma^2 = 0, \quad -\lambda_i + \lambda_{i+2}\xi \gamma^2 = \lambda_i \text{ for $i$ odd}, \quad \text{ and $\lambda_i$ is free for $i$ even.}$$ 
So, $\lambda_i = 0$ for all $i$ odd, and $\lambda_i$ is free for $i$ even, no matter the choice of $\xi$ and $\gamma$. Therefore, 

\begin{proposition} The $\cB$-center of the parameterized family of $T_2(-1)$-module algebras $A_\gamma$ is 
$$Z_\cB(A_\gamma) = C^l(A_\gamma) = \Bbbk[u^2] \quad \text{with} \quad g \cdot u^2 = u^2, \quad x \cdot u^2 = 0, $$
as a commutative algebra in $\cC \cong \cB \cong \cZ_\cB(\cC)$.  \qed
\end{proposition}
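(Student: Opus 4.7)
The plan is to exploit the fact that in the setup of Notation~\ref{not:T2} one has $H = \Bbbk$, so $\cC = \lmod{\Bbbk}(\cB)$ identifies canonically with $\cB$, and Proposition~\ref{prop:rel-center}(1) (together with the discussion in Section~\ref{sec:Cbraided}) gives $\cZ_\cB(\cC) \simeq \cB$ as braided monoidal categories. Under these identifications, Theorem~\ref{thm:Bcenter} collapses to the statement $Z_\cB(A_\gamma) \cong C^l(A_\gamma)$, computed directly inside $\cB$. Since $\cB = \lmod{K}$ for a quasi-triangular Hopf algebra $K$, Example~\ref{ex:CentlA} then produces the elementwise description
\[
C^l(A_\gamma) = \{s \in A_\gamma \mid m_{A_\gamma}(s \otimes t) = m_{A_\gamma}\Psi(s \otimes t), \; \forall t \in A_\gamma\}.
\]
Because $A_\gamma$ is generated as a $\Bbbk$-algebra by $u$, it suffices to impose this defining condition for $s = \sum_i \lambda_i u^i$ against the single test element $t = u$.

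The next step is to compute $m_{A_\gamma}\Psi(u^i \otimes u)$ explicitly. The relations $g^2 = 1$ and $g \cdot u = -u$ give $g \cdot u^i = (-1)^i u^i$; inductively using the $(g,1)$-skew-primitivity of $x$ together with $x \cdot u = \gamma \cdot 1_{A_\gamma}$ yields $x \cdot u^i = \gamma u^{i-1}$ for $i$ odd and $x \cdot u^i = 0$ for $i$ even. Substituting these into
\[
\Psi(u^i \otimes u) = (\cal{R}_\xi^{(2)} \cdot u) \otimes (\cal{R}_\xi^{(1)} \cdot u^i)
\]
and expanding the eight summands of $\cal{R}_\xi$, the four group-like terms combine to $(-1)^i (u \otimes u^i)$, while the four $x$-summands vanish for even $i$ and combine to $\xi\gamma^2 (1_{A_\gamma} \otimes u^{i-1})$ for odd $i$. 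Multiplying, one obtains $m_{A_\gamma}\Psi(u^i \otimes u) = u^{i+1}$ for $i$ even and $m_{A_\gamma}\Psi(u^i \otimes u) = -u^{i+1} + \xi\gamma^2 u^{i-1}$ for $i$ odd, matching the formulas cited in the excerpt.

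Finally, equating $m_{A_\gamma}(s \otimes u) = \sum_i \lambda_i u^{i+1}$ with $m_{A_\gamma}\Psi(s \otimes u)$ and matching coefficients of each $u^k$ yields $\lambda_1 \xi\gamma^2 = 0$ (from $u^0$) together with $2\lambda_i = \xi\gamma^2 \lambda_{i+2}$ for every odd $i \geq 1$ (from $u^{i+1}$), while the equations at even $i$ are tautological. Since $\cha(\Bbbk) \neq 2$ and $s$ has finite support, these constraints force $\lambda_i = 0$ for every odd $i$: if $\xi\gamma^2 = 0$ the relation reads $2\lambda_i = 0$ directly, while if $\xi\gamma^2 \neq 0$ then $\lambda_1 = 0$ and the recursion $\lambda_{i+2} = \tfrac{2}{\xi\gamma^2}\lambda_i$ propagates this zero through all odd indices. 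Thus $C^l(A_\gamma) = \Bbbk[u^2]$, with the $K$-action confirmed by a direct check using $\Delta(x) = g \otimes x + x \otimes 1$: $g \cdot u^2 = (-u)^2 = u^2$, and $x \cdot u^2 = (g \cdot u)(x \cdot u) + (x \cdot u)(u) = -\gamma u + \gamma u = 0$.

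The only real obstacle is the bookkeeping of the $\cal{R}$-matrix expansion in computing $\Psi(u^i \otimes u)$: the four $x$-type summands each produce a term proportional to $\gamma^2$, and getting the signs right (they combine as $+{+}{+}{-}$ to give a factor of $2$, absorbed by the overall $\tfrac{\xi}{2}$ to yield $\xi\gamma^2$) requires tracking the $g$-actions on $u^{i-1}$ using that $i-1$ is even. Once these signs are settled, the remainder of the argument is linear-algebraic coefficient matching.
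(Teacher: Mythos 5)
Your proposal is correct and follows essentially the same route as the paper: identify $\cC \cong \cB \cong \cZ_\cB(\cC)$ so that $Z_\cB(A_\gamma) = C^l(A_\gamma)$, use the elementwise description of the left center, test against the generator $u$ to obtain exactly the constraints $\lambda_1\xi\gamma^2 = 0$ and $2\lambda_i = \xi\gamma^2\lambda_{i+2}$ for odd $i$, and conclude $C^l(A_\gamma) = \Bbbk[u^2]$ independently of $\xi$ and $\gamma$. You are somewhat more explicit than the paper about the $\cal{R}$-matrix expansion, the case split on $\xi\gamma^2$, and the reduction to testing against the single generator $u$ (a step the paper uses implicitly), but the substance is identical.
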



\medskip

\section{On braided commutative module algebras over \texorpdfstring{$U_q(\mathfrak{g})$}{Uq(g)} and \texorpdfstring{$u_q(\mathfrak{g})$}{uq(g)}} \label{sec:uqg}

In this section, we provide an avenue to  generalize the results of Section~\ref{sec:uqsl2} on braided commutative module algebras over $u_q(\mathfrak{sl}_2)$ to those over the quantized enveloping algebras $U_q(\mathfrak{g})$ and $u_q(\mathfrak{g})$ (as in settings (\dag) and (\ddag) in the Introduction). To do so, we present the set-up of Notation~\ref{not:uqsl2} (for $u_q(\mathfrak{sl}_2)$) in the context of $U_q(\mathfrak{g})$ and $u_q(\mathfrak{g})$ in Sections~\ref{sec:Buqg}--\ref{sec:A}; here, we use the versions of quantum groups appearing in \cites{CP,BG}. We end in Section~\ref{sec:uqg-ques} with proposing several possible directions for future research to continue this work.

\subsection{The braided monoidal category \texorpdfstring{$\cB$}{B}} \label{sec:Buqg}

First, fix a Cartan datum $(I,\cdot)$. That is, let $I$ be a finite set, $L=\mZ \langle I \rangle$ be the lattice generated by $I$, and $\cdot$ a symmetric bilinear form on $L$ such that $i\cdot i$ is even and 
$a_{ij}:=2\tfrac{i\cdot j}{i\cdot i}\in \mZ_{\leq 0},$ for all $i\neq j$. Let $q$ be a free variable and $\mF:=\Bbbk(q)$. 

\smallskip

For the free abelian group $L=\mZ\langle g_i\mid i\in I\rangle$, setting
$\cal{R}(g_i,g_j)=q^{i\cdot j}$, for $i,j\in I$,
defines a dual $\cal{R}$-matrix on the group algebra 
$$\widehat{K}:=\mF L.$$ Now set $\cB$ to be the  braided monoidal category of $\widehat{K}$-comodules with the braiding obtained from $\cal{R}$.

\subsection{The setting \texorpdfstring{$(\dag)$}{(\textdagger)} for \texorpdfstring{$U_{q}(\mathfrak{g})$}{Uq(g)} in detail} \label{sec:dag}

Let $\fr{g}$ denote the semisimple Lie algebra associated  to $(I,\cdot)$, and take 
$$H:=U_q(\fr{n}^-)$$ 
to be the negative nilpotent part of the quantum group $U_q(\fr{g})$. We have that $H$ is a Hopf algebra in~$\cB$. As an algebra, $U_q(\fr{n}^-)$ is generated by primitive elements $f_i$, for $i\in I$, subject to the quantum Serre relations (as in \eqref{qgrouprel2} below for $f_i = F_i$). Further, $H$ is a $\widehat{K}$-Yetter--Drinfeld module, with $\widehat{K}$-action induced using $\cal{R}$ from the coaction $\delta$, where
\begin{align*}
g_i\cdot  f_j&=q^{-i\cdot j}f_j, & \delta(f_i)&=g_i^{-1}\otimes f_i.
\end{align*}

For $\cC=\lmod{H}(\cB)$,  we have that $\cC$ is equivalent to $\lmod{U_q(\fr{g}^-)}^{\mathrm{w}}$ and the relative monoidal center $\cZ_\cB(\cC)$ is equivalent to $\lwfmod{U_q(\fr{g})}$ as a braided monoidal category; see  \cite{L18}*{Theorem~4.7}. As in \cite{CP}*{Section~9.1}, the algebra $U_q(\mathfrak{g})$ is generated by $E_i,F_i,K_i^{\pm 1}$, for $i\in I$, subject to relations
\begin{gather}
K_iE_j=q^{i\cdot j}E_jK_i, \quad K_iF_j=q^{-i\cdot j}F_jK_i,\quad K_i^{\pm 1}K_i^{\mp 1}=1,\quad [E_i,F_j]=\delta_{i,j}\frac{K_i-K_i^{-1}}{q_i-q_i^{-1}},\label{qgrouprel1}\\
\sum_{k=0}^{1-a_{ij}}(-1)^k\binom{1-a_{ij}}{k}_{q_i}E^{1-a_{ij}-k}_i E_jE_i^k=0,\qquad \sum_{k=0}^{1-a_{ij}}(-1)^k\binom{1-a_{ij}}{k}_{q_i}F^{1-a_{ij}-k}_i F_jF_i^k=0,\label{qgrouprel2}
\end{gather}
for $i\neq j\in I$. Here, $q_i=q^{i\cdot i/2}$, and $\binom{n}{m}_q=
\frac{[n]_q!}{[m]_q![n-m]_q!}$, for $n\geq m$, where $[n]_q=\frac{q^n-q^{-n}}{q-q^{-1}}$. 
The coproduct is determined by
\begin{gather*}
\Delta(K_i)=K_i\otimes K_i, \qquad \Delta(E_i)= 1\otimes E_i + E_i\otimes K_i, \qquad \Delta(F_i)= K^{-1}_i\otimes F_i + F_i\otimes 1.
\end{gather*}

\begin{example}\label{expl:qgroup}
By Example \ref{expl:H-center},  we get that $U_q(\fr{n}^-)$ is a commutative algebra in $\lmod{U_q(\fr{g})}$ via
\begin{align*}
K_i\cdot f_j&=q^{-i\cdot j}f_j,&
F_i\cdot f_j&=f_if_j-q^{i\cdot j}f_jf_i, & E_i\cdot f_j&=\frac{\delta_{i,j}}{q_i-q_i^{-1}}.
\end{align*}
\end{example}

\subsection{The setting \texorpdfstring{$(\ddag)$}{(\textdaggerdbl)} for \texorpdfstring{$u_q(\mathfrak{g})$}{uq(g)} in detail} \label{sec:ddag}

We abuse notation and assume here that $q\in \Bbbk$  is a primitive $n$-th root of unity, where $n\geq 3$ is an odd integer and is coprime to $3$ if $\fr{g}$ contains a $G_2$-factor. Note that $q^2$ is also a primitive $n$-th root of unity. We set $q_i:=q^{i\cdot i/2}$. Now take 
$$H:=u_q(\fr{n}^-),$$ to be the negative nilpotent part of the quantum group $u_q(\fr{g})$, where $u_q(\fr{g})$ is the finite dimensional quotient of the specialization $\Ug$ by the relations $E_i^n=0$, $F_i^n=0$, $K_i^n=1$, for $i\in I$. Now $H$ is a Hopf algebra in the braided monoidal category $\cB$ (abusing notation) of $K$-modules, where $K$ is the quotient of the free group $\widehat{K}$ by the relations $g_i^n$ for $i\in I$. 

\smallskip

For $\cC=\lmod{H}(\cB)$, we have that $\cC$ is equivalent to $\lmod{u_q(\fr{g}^-)}$ and  the relative monoidal center $\cZ_\cB(\cC)$ is equivalent to $\lmod{u_q(\fr{g})}$ as a braided monoidal category; see  \cite{L18}*{Theorem~4.9}.
Moreover, an action on $u_q(\fr{n}^-)$ defined by the same formulas as in Example \ref{expl:qgroup} also makes $u_q(\fr{n}^-)$ a commutative algebra in $\lmod{\ug}$.

\subsection{Module algebras over \texorpdfstring{$u_q(\fr{g}^-)$}{uq(g-)}} \label{sec:A}

Now we show how to generalize the module algebra $A_\gamma$ over the Taft algebra $T_n(q^{-2})$ from Notation \ref{not:uqsl2} to the context of Section~\ref{sec:ddag}. Recall that $T_n(q^{-2}) \cong u_q(\fr{sl}_2^-)$ by Proposition~\ref{prop:uqaction}(2). So, here, we consider module algebras over the negative Borel part $u_q(\fr{g}^-)$ of $u_q(\fr{g})$; this is the subalgebra generated by $K_i, F_i$ for $i \in I$. 

\smallskip

Now for the braided monoidal category $\cC$ in Section~\ref{sec:ddag}, let us consider the algebra $A$ in $\cC$ defined as follows. Take $\boldsymbol{\gamma}$ to be a collection of scalars $(\gamma_i)_{i \in I}$ in $\Bbbk$, and let $A:=A_{\boldsymbol{\gamma}} = \Bbbk\langle u_i\mid i\in I\rangle$ be the free associative algebra with
 actions of $K$ and of $H=u_q(\fr{n}^-)$ given by
$$g_i\cdot u_j=q^{i\cdot j}u_j,  \quad \quad f_i\cdot u_j=\delta_{i,j}\gamma_i 1_A, \qquad \text{for }i,j\in I.$$

\subsection{Questions} \label{sec:uqg-ques}
Continuing the work in the previous section, we ask

\begin{question}
What is the $\cB$-center of the algebra $A_{\boldsymbol{\gamma}}$ in Section~\ref{sec:A}? How is it presented as a braided commutative algebra in $\lmod{u_q(\fr{g})}$?
\end{question}

In particular, one could consider the following problem.

\begin{problem}
By Example \ref{coreg-expl}, the algebra $A_{\boldsymbol{\gamma}}$, with $\gamma_i=1/(q_i-q_i^{-1})$, can be replaced with a quotient isomorphic to $u_q(\fr{n}^+)^{\Psi^{-1}}$. Compute the $\cB$-center of $u_q(\fr{n}^+)^{\Psi^{-1}}$ as the centralizer of $u_q(\fr{n}^+)^{\Psi^{-1}}$ in the braided Heisenberg double of $u_q(\fr{n}^-)$ from Definition \ref{def:braidHeis}.
\end{problem}

Moreover, motivated by Corollary~\ref{cor:MoritaSec6}, we ask:

\begin{problem}
What are conditions on the scalars $\boldsymbol{\gamma}=\{\gamma_i\}_{i \in I}$ that distinguish Morita equivalence classes for the $u_q(\mathfrak{g}^-)$-module algebras $A_{\boldsymbol{\gamma}}$?
\end{problem}

One can also consider actions on other quotients of $A_{\boldsymbol{\gamma}}$, actions in the context of Section~\ref{sec:dag} above, or revisit related work mentioned at the beginning or end of the Introduction, for further directions of investigation.


\section*{Acknowledgements} 

The authors thank Alexandru Chirvasitu, Alexei Davydov, J\"urgen Fuchs, and Christoph Schweigert for insightful discussions and references and two anonymous referees for helpful comments.
R.~Laugwitz was partially supported by an AMS-Simons travel grant. C. Walton was partially supported by a research fellowship from the Alfred P. Sloan foundation, and by the US National Science Foundation grants \#DMS-1663775, 1903192. Part of this work was carried out during a visit of R.~Laugwitz to University of Illinois, Urbana-Champaign, and hospitality of the hosting institution is gratefully acknowledged. 

\bigskip

\bibliography{bca-qea-bib}
\bibliographystyle{amsrefs}

\end{document}